\renewcommand{\Delta}{\triangle}
\definecolor{darkblue}{rgb}{0,0,0.7}
\definecolor{darkgreen}{rgb}{0.01,0.75,0.24}
\def \Ee[#1]{\mathcal{E}^{\text{{#1}}}}
\def\pa[#1,#2]{\frac{\partial {#1}}{\partial {#2}} }
\def\idom[#1,#2,#3]{\int_{#1}\hspace{1pt} {#2} \hspace{1pt} \text{d}{#3}}
\def\res[#1,#2]{\left.{#1}\right|_{#2}}
\def\var[#1,#2]{\langle \delta \mathcal{E}^{\text{{#1}}}({#2}),v\rangle}
\def\vars[#1,#2,#3]{\langle \delta^2\mathcal{E}^{\text{{#1}}}({#2})v,{#3}\rangle}
\def\vard[#1,#2,#3,#4]{\langle \delta\mathcal{E}^{\text{{#1}}}({#2})-\delta\mathcal{E}^{\text{{#3}}}({#4}),v\rangle}
\newcommand{\be}{\begin{equation}}
\newcommand{\en}{\end{equation}}
\newcommand{\ben}{\begin{equation*}}
\newcommand{\enn}{\end{equation*}}
\newcommand{\bea}{\begin{aligned}}
\newcommand{\ena}{\end{aligned}}
\def\ba#1\ena{\begin{align}#1\end{align}}
\def\ban#1\enan{\begin{align*}#1\end{align*}}
\theoremstyle{plain}
\newtheorem{thm}{Theorem}[section]
\newtheorem{theorem}{Theorem}[section]
\newtheorem{lem}[thm]{Lemma}
\newtheorem{proposition}[thm]{Proposition}
\newtheorem{remark}[thm]{Remark}
\numberwithin{equation}{section}
\begin{document}
\begin{center}
{\Large \textbf{Multilevel Ensemble Kalman-Bucy Filters}}

\vspace{0.5cm}

BY NEIL K. CHADA, AJAY JASRA \& FANGYUAN YU

{\footnotesize Computer, Electrical and Mathematical Sciences and Engineering Division, \\ King Abdullah University of Science and Technology, Thuwal, 23955, KSA.} \\
{\footnotesize E-Mail:\,} \texttt{\emph{\footnotesize neil.chada@kaust.edu.sa, ajay.jasra@kaust.edu.sa, fangyuan.yu@kaust.edu.sa}}
\end{center}

%
%

\begin{abstract}
In this article we consider the linear filtering problem in continuous-time. {We} develop and apply multilevel Monte Carlo (MLMC) strategies for ensemble Kalman--Bucy filters (EnKBFs). These filters can be viewed as approximations of  conditional McKean--Vlasov-type diffusion processes. They are also interpreted as the continuous-time analogue of the \textit{ensemble Kalman filter}, which has proven to be successful due to its applicability and computational cost. We prove that {an ideal version of} our multilevel EnKBF can achieve a mean square error (MSE) of $\mathcal{O}(\epsilon^2), \ \epsilon>0$ with a cost of order $\mathcal{O}(\epsilon^{-2}\log(\epsilon)^2)$. In order to prove this result we provide a Monte Carlo convergence and approximation bounds associated to time-discretized EnKBFs. This implies a reduction in cost compared to the (single level) EnKBF which requires a cost of $\mathcal{O}(\epsilon^{-3})$ to achieve an MSE of $\mathcal{O}(\epsilon^2)$. We test our theory on a linear problem, which we motivate through {high-dimensional examples of order $\sim \mathcal{O}(10^4)$ and $\mathcal{O}(10^5)$}. 
 \\
\noindent\textbf{Keywords}: Filtering, Multilevel Monte Carlo, Kalman--Bucy Filter, Ensemble Kalman filter,  Propagation of Chaos. \\
\textbf{AMS subject classifications:} 65C35, 65C05, 60G35, 93E11   
\end{abstract}


\section{Introduction}
\label{sec:intro}

The filtering problem refers to the recursive estimation of an unobserved Markov process associated to  sequentially observed data.
It is ubiquitous in many applications found in disciplines such as applied mathematics, statistics and engineering; see for instance \cite{BC09,CR11,PD04,LSZ15}.
Mathematically speaking, in filtering we have an unobserved $d_x$--dimensional Markov process of interest $\{X_t\}_{t \geq 0}$, and a $d_y$--dimensional observed process $\{Y_t\}_{t \geq 0}$, which are defined through the following stochastic differential equations
\begin{align}
\label{eq:dat}
dY_t &=  h(X_t) dt +  dV_t,   \\
\label{eq:sig}
dX_t &=   f(X_t) dt +  \sigma(X_t) dW_t, 
\end{align}
where $V_t$ and $W_t$ are independent $d_y-$ and $d_x-$dimensional Brownian motions respectively, with $h:\mathbb{R}^{d_x} \rightarrow \mathbb{R}^{d_y}$ and $f:\mathbb{R}^{d_x} \rightarrow \mathbb{R}^{d_x}$ denoting potentially nonlinear functions, and $\sigma: \mathbb{R}^{d_x} \rightarrow  \mathbb{R}^{d_x \times d_x}$ acting as a diffusion coefficient. The aim of the filtering problem is to compute a conditional expectation $\mathbb{E}[\varphi(X_t)|\mathscr{F}_t]$, where $\varphi:\mathbb{R}^{d_x} \rightarrow \mathbb{R}$ is an appropriately integrable function and $\{\mathscr{F}_t\}_{t \geq 0}$ is the filtration generated by the observed process \eqref{eq:dat}.  

In most problems of practical interest, the filtering distribution is intractable. A notable exception is in the linear case
which can be solved via the celebrated Kalman filter (KF) \cite{REK60}. It was first proposed by Rudolph Kalman and since then has been a highly-applicable algorithm. Popular extensions of the KF include the extended Kalman filter (ExKF) which motivates the KF in a nonlinear setting, and the ensemble Kalman filter (EnKF) \cite{GE09,GE94} which provides a Monte Carlo version of the KF, which is more computationally feasible in high-dimensional settings. For both the discrete and continuous-time filtering problem, the EnKF generates $N\in\mathbb{N}$ dependent samples (or particles) in parallel and recursively in time. The computational savings, relative to the KF, are due to the fact that the covariance matrix is not updated at each time step, {but instead an ensemble of particles are updated. This updated ensemble of particles is then used to estimate both the sample mean and covariances, which is repeated up to a finite iteration count}. If we assume that $h$ and $f$ are linear, then 
a variety of convergence results exist for the EnKF in both {discrete \cite{GMT11,MCB11,XTT18}} and continuous time \cite{DT18}. {Some of these works also apply to some nonlinear problems, however with the EnKF the main interest arises in linear filtering.}

One recent development in applied mathematics which can improve upon Monte Carlo methods is the multilevel Monte Carlo (MLMC) method. This was developed in \cite{MBG08,MBG15,hein} whose motivation was to ensure that the mean square error (MSE) was of order $\mathcal{O}(\epsilon^2)$ (for some $\epsilon>0$), but with a reduction in computational cost relative to using an ordinary Monte Carlo method. The approach is typically used for problems that are subject to discretization, such as the approximation of expectations w.r.t.~a law of a diffusion process, for instance by using the Euler method. The idea is to write the expectation w.r.t.~the law of a very precise discretization as a collapsing sum of expectations w.r.t.~laws of increasingly coarse discretizations. Then by appropriately sampling from couplings of laws of discretizations it is possible to reduce the cost to achieve an MSE of $\mathcal{O}(\epsilon^2)$ relative to an ordinary Monte Carlo method.
Since its original application for diffusion processes in finance, it has been applied to filtering methods for uncertainty quantification. Most notably this includes multilevel particle filters \cite{JKL17} and the multilevel ensemble Kalman filter (MLEnKF) \cite{HLT16}. For the latter there has been extensive work on improving the theory \cite{LTT16}, extending the original MLEnKF \cite{CHL20,HST20}, which includes the infinite-dimensional setting, and with applications in data assimilation procedures \cite{FMS20} such as geophysical models.

The purpose of this article is to extend MLMC procedures for the EnKF where we consider the ensemble Kalman--Bucy filter (EnKBF). This filter
is an $N-$particle approximation of the Kalman-Bucy diffusion process, which itself can be considered as a conditional type McKean-Vlasov diffusion process. The Kalman-Bucy diffusion admits a Gaussian marginal time law whose mean and variance which is identical to that of a linear filter of the type \eqref{eq:dat}-\eqref{eq:sig}. As the Kalman-Bucy diffusion cannot typically be simulated, one uses $N$ samples simulated in parallel and recursively in time, to approximate the law of the process. The convergence of this process has been well studied for instance in \cite{BD20,BD17,DT18,DKT17}.


Our motivation for developing MLMC strategies for the EnKBF, termed the 
multilevel Ensemble Kalman--Bucy filter (MLEnKBF), are four-fold: firstly (i) as mentioned it can be viewed as a continuous-time version of the EnKF, which is of interest as data now comes in a continuous-form, which is apparent in certain applications. Most of the literature on multilevel methods applied to filters has primarily been in the discrete-time case. However some recent work has bee done on understanding MLMC for continuous-time filtering problems \cite{JYH20}. Secondly (ii) the EnKBF has connections with new particle filtering methodologies such as the feedback particle filter \cite{TWM18,WRS18,WT19}, which, for particular setups, has shown to overcome the curse of dimensionality.  Thirdly (iii) despite the connection with the EnKF, the EnKBF is different in the form it takes, which as mentioned inherently makes it more difficult to derive any analysis. 
Lastly (iv) as it assumes a linear and Gaussian setting it coincides with the optimal filter, the KBF, which provides a motivation for linear filtering. 

\subsection{Contributions}
Our contributions of this manuscript are highlighted through the following points:
\begin{itemize}
\item We firstly consider understanding the Monte Carlo approximation and convergence of the EnKBF with respect to the KBF. In the continuous-time setting this has been shown in \cite{DT18}, however our results are specific in a discretized setting where we adopt an Euler discretization. This is crucial in order to proceed to the multilevel case. As a result we are able to show {an almost sure convergence of the estimator to a Gaussian associated to the discretized KBF.}
\item We develop a multilevel version of the EnKBF, which we refer to as the MLEnKBF. We provide a mathematical analysis of the mean square error of the MLEnKBF estimator
of the filter. {Our analysis has the limitation that it shows that the system actually simulated is `close' for a large number of samples to an ideal system. We then directly analyze
that ideal system.}
 Our analysis shows, for the ideal system, that in the multilevel setting to achieve a mean square error of $\mathcal{O}(\epsilon^2)$, for $\epsilon>0$, we require a cost of $\mathcal{O}(\epsilon^{-2}\log(\epsilon)^2)$ which is a reduction in cost compared to the EnKBF which is $\mathcal{O}(\epsilon^{-3})$.
\item {To illustrate our findings we provide numerical experiments that highlight the reduction in cost for the MLEnKBF. This is provided for high dimensional examples of the order $\sim \mathcal{O}(10^4)$ and $\mathcal{O}(10^5)$. Furthermore we also numerically show that a deterministic counterpart to the EnKBF, can benefit from the use of MLMC, which are tested on an Ornstein--Uhlenbeck process}.
\end{itemize}
\subsection{Outline}
This paper is organized as follows: We begin with Section \ref{sec:model} where we introduce our methodology of applying multilevel techniques to the ensemble Kalman--Bucy diffusion. In Section \ref{sec:theory} we present our theoretical result which is an upper-bound on the mean square error of our  MLEnKBF estimator.
In Section \ref{sec:num} {we test our theory on an Ornstein--Uhlenbeck process on a number of high-dimensional examples}. Finally we conclude our findings in Section \ref{sec:conc}. The proof of the main result in Section \ref{sec:theory} is provided in the Appendix.


\section{Model and Method}
\label{sec:model}

In this section we provide a concise overview on Kalman--Bucy filtering, which is specific to the linear Gaussian case. We extend this to the algorithm of interest which is the ensemble Kalman--Bucy filter (EnKBF). After doing so we then introduce the concept of Multilevel Monte Carlo and apply it to the EnKBF. This leads to the proposed method of the multilevel ensemble Kalman--Bucy filter. Below, for any probability measure $\pi$ and $\pi-$integrable and possibly matrix-valued function $\varphi$ we write the expectation
of $\varphi$ w.r.t.~$\pi$ as $\pi(\varphi)$.

\subsection{Kalman--Bucy Filters}

We consider the linear filtering problem, 
\begin{align}
\label{eq:data}
dY_t & =  CX_t dt + R_2^{1/2} dV_t, \\
\label{eq:signal}
dX_t & =  A X_t dt + R_1^{1/2} dW_t,
\end{align}
where $(Y_t,X_t)\in\mathbb{R}^{d_y}\times\mathbb{R}^{d_x}$, $(V_t,W_t)$ is a $(d_y+d_x)-$dimensional standard Brownian motion, 
$A$ is a square $d_x\times d_x$ matrix, $C$ is a $d_y\times d_x$ matrix, $Y_0=0$, $X_0\sim\mathcal{N}_{d_x}(\mathcal{M}_0,\mathcal{P}_0)$
($d_x-$dimensional Gaussian distribution, mean $\mathcal{M}_0$, covariance matrix $\mathcal{P}_0$)
and $R_1^{1/2},R_2^{1/2}$ are square (of the appropriate dimension) and symmetric and invertible matrices. It is well-known that, letting
$\{\mathscr{F}_t\}_{t\geq 0}$ be the filtration generated by the observations, the conditional probability of $X_t$ given $\mathscr{F}_t$ is a
Gaussian distribution with mean and covariance matrix 
$$
\mathcal{M}_t := \mathbb{E}[X_t|\mathscr{F}_t], \quad {\mathcal{P}_t := \mathbb{E}\Big[[X_t - \mathbb{E}(X_t|\mathscr{F}_t)][X_t - \mathbb{E}(X_t|\mathscr{F}_t)]^{\top}\Big]},
$$
given by the Kalman--Bucy and Ricatti equations \cite{DT18}
\begin{align}
\label{eq:kbf}
d\mathcal{M}_t &= A \mathcal{M}_t dt + \mathcal{P}_tC^{\top}R^{-1}_2\Big(dY_t - C{\mathcal{M}_t}dt\Big), \\
\label{eq:ricc}
\partial_t\mathcal{P}_t &= \textrm{Ricc}(\mathcal{P}_t),
\end{align}
where the Riccati drift is defined as
$$
\textrm{Ricc}(Q) = AQ + QA^{\top}-QSQ + R, \quad \textrm{with} \ R =R_1 \quad \textrm{and} \ S:=C^{\top}R^{-1}_2C.
$$
 A derivation of \eqref{eq:kbf} - \eqref{eq:ricc} can be found in \cite{AJ70}

The Kalman--Bucy diffusion is a conditional McKean-Vlasov diffusion process (e.g.~\cite{BD20,BD19b}):
\begin{equation}
\label{eq:non-lin}
d\overline{X}_t = A\overline{X}_t dt + R_1^{1/2}d\overline{W}_t + \mathcal{P}_{\eta_t}C^{\top}R_2^{-1}\Big(dY_t -
\Big[C\overline{X}_tdt+R_2^{1/2}d\overline{V}_t\Big]\Big),
\end{equation}
where $(\overline{V}_t,\overline{W}_t,\overline{X}_0)$ are independent copies of $(V_t,W_t,X_0)$ and covariance
$$
\mathcal{P}_{\eta_t} = \eta_t\Big([e-\eta_t(e)][e-\eta_t(e)]^{\top}\Big), \quad \eta_t:= \mathrm{Law}(\overline{X}_t|\mathscr{F}_t),
$$
such that $\eta_t$ is the conditional law of $\overline{X}_t$ given $\mathscr{F}_t$ and $e(x)=x$. It is important to note that the nonlinearity in \eqref{eq:non-lin} does not depend on the distribution of the state $\mathrm{Law}(\overline{X}_t)$ but on the conditional distribution $\eta_t$, and $\mathcal{P}_{\eta_t}$ alone does not depend on $\mathscr{F}_t$.
It is known that the conditional  expectations of the random states $\overline{X}_t$ and their conditional covariance matrices $\mathcal{P}_{\eta_t}$, w.r.t. $\mathscr{F}_t$, satisfy the Kalman--Bucy and the Riccati equations. In addition, for any $t\in\mathbb{R}^+$
$$
\eta_t:= \mathrm{Law}(\overline{X}_t|\mathscr{F}_t) = \mathrm{Law}({X}_t|\mathscr{F}_t).
$$
As a result, an alternative to recursively computing \eqref{eq:kbf} - \eqref{eq:ricc}, is to generate $N$ i.i.d.~samples from 
\eqref{eq:non-lin} and apply a Monte Carlo approximation, which we now discuss.

\subsection{Ensemble Kalman--Bucy Filter}

Exact simulation from \eqref{eq:non-lin} is typically not possible, as one cannot compute $\mathcal{P}_{\eta_t}$ exactly. 
The ensemble Kalman--Bucy filter (EnKBF) can be used to deal with this issue. The EnKBF coincides with the mean-field particle interpretation of the diffusion \eqref{eq:non-lin}. The EnKBF is an $N-$particle system that is simulated as follows, for the $i^{th}-$particle, $i\in\{1,\dots,N\}$:
\begin{equation}
\label{eq:enkbf}
d\xi_t^i = A\xi_t^i dt + R_1^{1/2} d\overline{W}_t^i + {{U}_t^N}\Big(dY^{{i}}_t -
\Big[C\xi_t^idt+R_2^{1/2}d\overline{V}_t^i\Big]\Big),
\end{equation}
such that ${U_t^N={P}_t^NC^{\top}R_2^{-1}}$ and
\begin{align*}
{P}_t^N & = {\Big(1-\frac{1}{N}\Big)^{-1}\mathcal{P}_{\eta_t^N}}= \frac{1}{N -1}\sum_{i=1}^N (\xi_t^i-m^N_t)(\xi_t^i-m^N_t)^{\top},\\
m^N_t & =  \frac{1}{N }\sum_{i=1}^N \xi_t^i,
\end{align*}
where $\xi_0^i\stackrel{\textrm{i.i.d.}}{\sim}\mathcal{N}_{d_x}(\mathcal{M}_0,\mathcal{P}_0)$. It is remarked that when $C=0$, \eqref{eq:enkbf} reduces to $N$ independent copies of an Ornstein--Uhlenbeck process. The EnKBF, KBF and its properties have been well studied, most notably by Del Moral, de Wiljes and coauthors \cite{DT18,DKT17,WRS18,WT19}. 

%


In practice, one will not have access to an entire trajectory of observations. Thus numerically, one often works with a time discretization, such as the Euler method.
Let $\Delta_l=2^{-l}$ then we will generate the system for $(i,k)\in\{1,\dots,N\}\times\mathbb{N}_0=\mathbb{N}\cup\{0\}$ as
\begin{align}
\label{eq:enkf_ps}
\xi_{(k+1)\Delta_l}^i &= \xi_{k\Delta_l}^i + A\xi_{k\Delta_l}^i\Delta_l + R_1^{1/2} [\overline{W}_{(k+1)\Delta_l}^i-\overline{W}_{k\Delta_l}^i] \\
&+{U_{k\Delta_l}^N}\Big([Y^{{i}}_{(k+1)\Delta_l}-Y^{{i}}_{k\Delta_l}]-
\Big[C\xi_{k\Delta_l}^i\Delta_l+R_2^{1/2}[\overline{V}_{(k+1)\Delta_l}^i-\overline{V}_{k\Delta_l}^i]\Big]\Big), \nonumber
\end{align}
such that {$U_{k\Delta_l}^N=P_{k\Delta_l}^NC^{\top}R_2^{-1}$}
\begin{align*}
P_{k\Delta_l}^N & =  \frac{1}{N -1}\sum_{i=1}^N (\xi_{k\Delta_l}^i-m_{k\Delta_l^N})(\xi_{k\Delta_l}^i-m_{k\Delta_l}^N)^{\top}, \\
m_{k\Delta_l}^N & =  \frac{1}{N }\sum_{i=1}^N \xi_{k\Delta_l}^i,
\end{align*}
and $\xi_0^i\stackrel{\textrm{i.i.d.}}{\sim}\mathcal{N}_{d_x}(\mathcal{M}_0,\mathcal{P}_0)$. For $l\in\mathbb{N}_0$ given, denote by $\eta_t^{N,l}$ as the $N-$empirical
probability measure of the particles $(\xi_t^1,\dots,\xi_t^N)$, where $t\in\{0,\Delta_l,2\Delta_l,\dots\}$. For $\varphi:\mathbb{R}^{d_x}\rightarrow\mathbb{R}^{d_x}$
we will use the notation $\eta_t^{N,l}(\varphi):=\tfrac{1}{N}\sum_{i=1}^N\varphi(\xi_t^{i})$. 

\subsection{Convergence}\label{sec:conv_enkbf}

The convergence of this discretized particle system is established below. {To assist our exposition, one can consider the i.i.d.~particle system based upon the Euler discretization of the EnKBF \eqref{eq:enkbf},  for $(i,k)\in\{1,\dots,N\}\times\mathbb{N}_0$
\begin{align}\label{eq:enkf_is}
\zeta_{(k+1)\Delta_l}^i &=(I+A\Delta_l)\zeta_{k\Delta_l}^i + R_1^{1/2} [\overline{W}_{(k+1)\Delta_l}^i-\overline{W}_{k\Delta_l}^i]  \\
&+ U_{k\Delta_l}\Big([Y_{(k+1)\Delta_l}-Y_{k\Delta_l}] -
\Big[C\zeta_{k\Delta_l}^i\Delta_l+R_2^{1/2}[\overline{V}_{(k+1)\Delta_l}^i-\overline{V}_{k\Delta_l}^i]\Big]\Big), \nonumber
\end{align}
where $\zeta_0^i\stackrel{\textrm{i.i.d.}}{\sim}\mathcal{N}_{d_x}(\mathcal{M}_0,\mathcal{P}_0)$ and $U_{k\Delta_l}=P_{k\Delta_l} C^{\top}R_2^{-1}$. It is straightforward to show, using a characteristic function argument, that for any $(i,k)\in\{1,\dots,N\}\times\mathbb{N}_0$, $\zeta_{(k+1)\Delta_l}^i|\mathscr{F}_{(k+1)\Delta_l}\stackrel{\textrm{i.i.d.}}{\sim}\mathcal{N}_{d_x}(m_{(k+1)\Delta_l},P_{(k+1)\Delta_l})$
(we denote the associated Gaussian probability measure as $\eta_{(k+1)\Delta_l}^l$) and
\begin{align}
m_{(k+1)\Delta_l} & =  m_{k\Delta_l} + Am_{k\Delta_l}\Delta_l + U_{k\Delta_l}\Big(
[Y_{(k+1)\Delta_l}-Y_{k\Delta_l}] -Cm_{k\Delta_l}\Delta_l
\Big),\label{eq:iid_mean_rec}\\
P_{(k+1)\Delta_l} & =  P_{k\Delta_l} + \textrm{Ricc}(P_{k\Delta_l})\Delta_l +  (A-P_{k\Delta_l}S)P_{k\Delta_l}(A^{\top}-SP_{k\Delta_l})\Delta_l^2.\label{eq:iid_cov_rec}
\end{align}
One can note that the recursion of the mean is a first-order discretizationof the Kalman-Bucy filter and the recursion for the covariance is a type of second order discretization of the Riccati equation.}

Recall $e(x)=x$.
For a $d_x-$dimensional vector $x$ denote $\|x\|_2=(\sum_{j=1}^{d_x}x(j)^2)^{1/2}$, where $x(j)$ is the $j^{th}-$element of $x$. 
We use the notation $[\eta_{t+k_1\Delta_l}^{N,l}-\eta_{t+k_1\Delta_l}^l](e)=\eta_{t+k_1\Delta_l}^{N,l}(e)-\eta_{t+k_1\Delta_l}^l(e)$.
\begin{proposition}\label{prop:var_term1_sec_state}
{For any $(l,t,k_1)\in\mathbb{N}_0\times\mathbb{R}^+\{0,1,\dots, \Delta_l^{-1}\}$ almost surely:
$$
 \lim_{N\rightarrow\infty}[\eta_{t+k_1\Delta_l}^{N,l}-\eta_{t+k_1\Delta_l}^l](e) = 0.
$$}
\end{proposition}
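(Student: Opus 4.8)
The plan is to compare the actually-simulated ensemble $\{\xi^i\}$ of \eqref{eq:enkf_ps} with the i.i.d.\ ideal ensemble $\{\zeta^i\}$ of \eqref{eq:enkf_is} through a synchronous coupling: realise both on the same probability space with $\xi_0^i=\zeta_0^i$, the same particle noises $(\overline W^i,\overline V^i)$ and the same observation path $Y$. Writing $s:=t+k_1\Delta_l$ and recalling from the discussion preceding \eqref{eq:iid_mean_rec} that, conditionally on $\mathscr F_s$, the $\zeta_s^i$ are i.i.d.\ $\mathcal N_{d_x}(m_s,P_s)$, we decompose
\[
[\eta_s^{N,l}-\eta_s^l](e)=\frac1N\sum_{i=1}^N\big(\xi_s^i-\zeta_s^i\big)+\Big(\frac1N\sum_{i=1}^N\zeta_s^i-m_s\Big).
\]
Conditionally on $\mathscr F_s$ the second bracket is exactly $\mathcal N_{d_x}(0,P_s/N)$, and $P_s$ is deterministic because it obeys the closed recursion \eqref{eq:iid_cov_rec}; hence $\mathbb E\|\tfrac1N\sum_i\zeta_s^i-m_s\|_2^p=O(N^{-p/2})$ for every $p\ge1$. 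Everything therefore reduces to controlling the coupling error $\tfrac1N\sum_i(\xi_s^i-\zeta_s^i)$ and then upgrading these moment bounds to an almost-sure statement.

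For the coupling error I would run an induction over the finitely many Euler steps $k$ between time $0$ and time $s$, establishing jointly, for each $p\ge1$: (a) $\sup_i\mathbb E\|\xi_{k\Delta_l}^i\|_2^p\le C_{k,p}$; (b) $\mathbb E\|P_{k\Delta_l}^N-P_{k\Delta_l}\|^p\le C_{k,p}N^{-p/2}$; (c) $\sup_i\mathbb E\|\xi_{k\Delta_l}^i-\zeta_{k\Delta_l}^i\|_2^p\le C_{k,p}N^{-p/2}$. At $k=0$ these are immediate: $\xi_0^i$ is Gaussian, $\xi_0^i-\zeta_0^i=0$, and since $P_0=\mathcal P_0$ statement (b) is the classical $L^p$ rate for the empirical covariance of $N$ i.i.d.\ Gaussians about its mean.

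For the step $k\to k+1$, subtracting \eqref{eq:enkf_is} from \eqref{eq:enkf_ps} and setting $e_k^i:=\xi_{k\Delta_l}^i-\zeta_{k\Delta_l}^i$ yields the linear recursion
\[
e_{k+1}^i=\big(I+A\Delta_l-U_{k\Delta_l}C\Delta_l\big)e_k^i+(P_{k\Delta_l}^N-P_{k\Delta_l})C^{\top}R_2^{-1}\,r_k^i,
\]
with deterministic and bounded coefficient matrix and $r_k^i:=[Y_{(k+1)\Delta_l}-Y_{k\Delta_l}]-C\xi_{k\Delta_l}^i\Delta_l-R_2^{1/2}[\overline V_{(k+1)\Delta_l}^i-\overline V_{k\Delta_l}^i]$. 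Taking $L^p$ norms and applying Minkowski and Hölder ($\|(P^N-P)r\|_{L^p}\le\|P^N-P\|_{L^{2p}}\|r\|_{L^{2p}}$), together with (a)--(b) at step $k$ and the finiteness of all Gaussian moments of the increments of $Y$ and $\overline V^i$, gives (c) at $k+1$ with rate $N^{-1/2}$. Estimate (b) at $k+1$ follows by inserting the empirical covariance $\widehat P^N$ of the $\zeta$'s, $P^N-P=(P^N-\widehat P^N)+(\widehat P^N-P)$: the second difference is $O(N^{-p/2})$ by the i.i.d.\ Gaussian bound, while the first, after expanding the defining quadratics, is controlled by products of $\sup_i\|e_{k+1}^i\|_{L^{2p}}$ with the moment bounds (a). Estimate (a) at $k+1$ is a routine Gr\"onwall/H\"older bound on \eqref{eq:enkf_ps}, using $\|U^N_{k\Delta_l}\|\le\|C^{\top}R_2^{-1}\|\,\tfrac1{N-1}\sum_j\|\xi_{k\Delta_l}^j\|_2^2$.

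Combining everything, $\|\tfrac1N\sum_i(\xi_s^i-\zeta_s^i)\|_{L^p}\le\sup_i\|\xi_s^i-\zeta_s^i\|_{L^p}=O(N^{-1/2})$ (no cancellation across $i$ is available, nor needed), so $\mathbb E\|[\eta_s^{N,l}-\eta_s^l](e)\|_2^p\le C_pN^{-p/2}$ for every $p\ge2$. Taking any $p>2$ and using Markov's inequality, $\sum_{N\ge1}\mathbb P(\|[\eta_s^{N,l}-\eta_s^l](e)\|_2>\epsilon)<\infty$ for every $\epsilon>0$, so the Borel--Cantelli lemma delivers the almost-sure convergence to $0$. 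The main obstacle is precisely this simultaneous induction: $P^N$ is a quadratic functional of the whole ensemble that re-enters the particle dynamics multiplicatively through the gain $U^N_{k\Delta_l}$, so (a), (b) and (c) cannot be proved in isolation and must be propagated together through the Euler steps, the delicate point being to absorb this random, ensemble-dependent gain in the moment bound (a); on the other hand, since only a finite number of steps is involved, no stability or observability hypothesis on $(A,C,R_1,R_2)$ is required.
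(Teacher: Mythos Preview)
Your proposal is correct and follows essentially the same route as the paper. The paper's one-line proof invokes its appendix Theorem~\ref{theo:prop} (the propagation-of-chaos bound $\mathbb{E}|\xi_{k\Delta_l}^i(j)-\zeta_{k\Delta_l}^i(j)|^q\le \mathsf{C}N^{-q/2}$), the Marcinkiewicz--Zygmund inequality for the i.i.d.\ $\zeta$'s, and Borel--Cantelli; your decomposition, your simultaneous induction (a)--(c), and your Markov/Borel--Cantelli conclusion reproduce exactly this structure, with the Gaussian moment computation for $\tfrac1N\sum_i\zeta_s^i-m_s$ playing the role of the MZ step.

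The only noteworthy difference is in how (b) is propagated. The paper first derives an autonomous recursion for $P_{k\Delta_l}^N$ (Proposition~\ref{prop:denkf_rec}), then compares it term-by-term with the deterministic recursion \eqref{eq:iid_cov_rec} for $P_{k\Delta_l}$, controlling the martingale-type remainders via Lemmata~\ref{lem:p_first}--\ref{lem:p_sec}. You instead interpose the empirical covariance $\widehat P^N$ of the $\zeta$'s and bound $P^N-\widehat P^N$ through the coupling error (c). Both work; the paper's route keeps (b) logically prior to (c), whereas yours makes (b) at step $k{+}1$ depend on (c) at step $k{+}1$, so the moment exponents escalate slightly faster through the induction --- harmless here since only finitely many Euler steps are involved, as you correctly note.
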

\begin{proof}
{The proof follows by using Theorem \ref{theo:prop} in the appendix and the Marcinkiewicz-Zygmund inequality for i.i.d.~random variables, along with a standard first Borel Cantelli lemma argument.}
\end{proof}

{Now if we set $\hat{\eta}_{t+k_1\Delta_l}^{N,l}(e)=\frac{1}{N}\sum_{i=1}^N\zeta_{t+k_1\Delta_l}^i$, using the Markov inequality and Theorem \ref{theo:prop} one can show that for any $\varepsilon>0$ and $q>0$:
\begin{equation}\label{eq:prob_est}
\mathbb{P}\left(\left|\eta_{t+k_1\Delta_l}^{N,l}(e)-\hat{\eta}_{t+k_1\Delta_l}^{N,l}(e)\right|>\varepsilon\right) \leq \frac{\mathsf{C}}{\varepsilon^{2q} N^{q/2}}.
\end{equation}
where $\mathsf{C}$ is a constant that can depend on $(l,q,t,k_1)$ but not $N$. This implies that for $N$ large, the system described by the recursion \eqref{eq:enkf_ps} shares
the properties of the system \eqref{eq:enkf_is}. As a result, we now simply consider the latter system in our subsequent discussion.
Note that the following analysis does not consider the time parameter {$t$} and this is discussed in Section \ref{sec:theory}.
In Proposition \ref{prop:var_term1}, in the appendix, we have shown that:
$$
\mathbb{E}\Big[\Big\|[\hat{\eta}_{t+k_1\Delta_l}^{N,l}-\eta_{t+k_1\Delta_l}](e)\Big\|_2^2\Big] \leq \mathsf{C}\Big(\frac{1}{N}+\Delta_l^2\Big),
$$
where $\mathsf{C}$ does not depend upon $l$ nor $N$ but can depend on $t$. We shall seek to control this latter MSE.
Let $\epsilon>0$, to achieve a MSE of 
$\mathcal{O}(\epsilon^2)$  one must set $N=\mathcal{O}(\epsilon^{-2})$
 and $l=\lfloor|\log(\epsilon)|/\log(2)\rfloor+\mathcal{O}(1)$. The cost is $\mathcal{O}(N\Delta_l^{-1})$ to achieve a $\mathcal{O}(\epsilon^2)$ MSE, which is $\mathcal{O}(\epsilon^{-3})$ under our choice of $l$ and $N$.}

\subsection{Multilevel Monte Carlo}

Before introducing our proposed multilevel EnKBF, we first provide a brief overview on the idea of the MLMC \cite{MBG08,MBG15,hein}. 
Let $\pi$ be a probability on a measurable space $(\mathsf{X},\mathscr{X})$ and for $\pi-$integrable $\varphi:\mathsf{X}\rightarrow\mathbb{R}$
consider the problem of estimating $\pi(\varphi)=\mathbb{E}_{\pi}[\varphi(X)]$. We assume that we only have access to a sequence of approximations of $\pi$,  $\{\pi_l\}_{l\in\mathbb{N}_0}$, also
each defined on $(\mathsf{X},\mathscr{X})$ and we are now interested in estimating $\pi_l(\varphi)$, such that 
$\lim_{l\rightarrow\infty}|[\pi_l-\pi](\varphi)|=0$. 
Note that it is assumed that (for instance) the cost of simulation from $\pi_l$ increases with $l$, but, the approximation error between $\pi$ and $\pi_l$ is also falling as $l$ grows.
Then one can consider the telescoping sum
$$
\pi_L(\varphi) = \pi_0(\varphi) + \sum^L_{l=1}[\pi_l-\pi_{l-1}](\varphi).
$$
Then the idea is as follows:
\smallskip
\begin{enumerate}
\item{Approximate $\pi_0(\varphi)$ by using i.i.d.~sampling from $\pi_0$.}
\item{Independently for each $l\in\{1,\dots,L\}$ and the sampling in 1.~approximate $[\pi_l-\pi_{l-1}](\varphi)$ by i.i.d.~sampling from a coupling of $(\pi_l,\pi_{l-1})$.}
\end{enumerate}
The key point is to construct a coupling in 2.~so that the mean square error (MSE) can be reduced relative to i.i.d.~sampling from $\pi_L$. This latter construction often
relies on the specific properties of $(\pi_l,\pi_{l-1})$. 

Denoting $N_0\in\mathbb{N}$ i.i.d.~samples from $\pi_0$ as $(X^{1,0},\dots,X^{N_0,0})$ and for $l\in\{1,\dots,L\}$, $N_l\in\mathbb{N}$ samples
from a coupling of $(\pi_l,\pi_{l-1})$ as $((X^{1,l},\tilde{X}^{1,l-1}),\dots,(X^{N_l,l},\tilde{X}^{N_l,l-1}))$, one has the MLMC approximation of $\mathbb{E}_{\pi_L}[\varphi(X)]$
$$
\pi_L^{ML}(\varphi) := \frac{1}{N_0}\sum_{i=1}^{N_0}\varphi(X^{i,0}) + \sum_{l=1}^L \frac{1}{N_l}\sum_{i=1}^{N_l}\{\varphi(X^{i,l})-\varphi(\tilde{X}^{i,l-1})\}.
$$
The MSE is then
$$
\mathbb{E}[(\pi_L^{ML}(\varphi)-\pi(\varphi))^2] = \mathbb{V}\textrm{ar}[\pi_L^{ML}(\varphi)] + 
[\pi_L-\pi](\varphi)^2,
$$
where $\mathbb{V}\textrm{ar}[\cdot]$ denotes the variance (which we assume exists). One has
$$
\mathbb{V}\textrm{ar}[\pi_L^{ML}(\varphi)] = \Bigg(\frac{\mathbb{V}\textrm{ar}[\varphi(X^{1,0})]}{N_0}+\sum_{l=1}^L\frac{
\mathbb{V}\textrm{ar}[\varphi(X^{1,l})-\varphi(\tilde{X}^{1,l-1})]
}{N_l}\Bigg).
$$
Thus if {$\mathbb{V}\textrm{ar}[\varphi(X^{1,l})-\varphi(\tilde{X}^{1,l-1})]$ falls sufficiently fast with $l$, and given an appropriate characterization of: the bias $
[\pi_L-\pi](\varphi)$,  and the cost as a function of $l$ it is possible to choose $N_l$ and $L$ to improve upon the i.i.d.~estimator
$$
\frac{1}{N}\sum_{i=1}^N \varphi(X^{i,L}),
$$
where $X^{i,L}$ are i.i.d.~from $\pi_L$. MLMC has been particularly popular when dealing with stochastic models, such as stochastic differential equations, but recently has been applied to filtering in particular the discrete-time EnKF \cite{CHL20,HLT16,HST20}. 



\subsection{Multilevel EnKBF}

To enhance the efficiency of using just \eqref{eq:enkf_ps}, we consider a coupled Ensemble Kalman--Bucy filter in a similar way to \cite{HLT16}. Let $l\in\mathbb{N}$ then  
we run the coupled system for $(i,k)\in\{1,\dots,N\}\times\mathbb{N}_0$
\begin{align}
\xi_{(k+1)\Delta_l}^{i,l} & = \xi_{k\Delta_l}^{i,l} + A\xi_{k\Delta_l}^{i,l}\Delta_l + R_1^{1/2} [\overline{W}_{(k+1)\Delta_l}^i-\overline{W}_{k\Delta_l}^i]  
+ U_{k\Delta_l}^{N,l}\Big([Y^{{i}}_{(k+1)\Delta_l}-Y^{{i}}_{k\Delta_l}]\nonumber \\ &-\Big[C\xi_{k\Delta_l}^{i,l}\Delta_l + R_2^{1/2}[\overline{V}_{(k+1)\Delta_l}^i-\overline{V}_{k\Delta_l}^i]\Big]\Big),  \label{eq:ml_enkbf1}\\
\xi_{(k+1)\Delta_{l-1}}^{i,l-1} & =  \xi_{k\Delta_{l-1}}^{i,l-1} + A\xi_{k\Delta_{l-1}}^{i,l-1}\Delta_{l-1} + R_1^{1/2} [\overline{W}_{(k+1)\Delta_{l-1}}^i-\overline{W}_{k\Delta_{l-1}}^i]+ U_{k\Delta_{l-1}}^{N,l-1}\Big([Y^{{i}}_{(k+1)\Delta_{l-1}}-Y^{{i}}_{k\Delta_{l-1}}] \nonumber \\&-\Big[C\xi_{k\Delta_{l-1}}^{i,l-1}\Delta_{l-1}+ R_2^{1/2}[\overline{V}_{(k+1)\Delta_{l-1}}^i-\overline{V}_{k\Delta_{l-1}}^i]\Big]\Big),\label{eq:ml_enkbf2}
\end{align}
where $U_{k\Delta_s}^{N,s}=P_{k\Delta_s}^{N,s}C^{\top}R_2^{-1}$, $s\in\{l-1,l\}$, and our sample covariances and means are defined as
\begin{align*}
P_{k\Delta_l}^{N,l} & =  \frac{1}{N -1}\sum_{i=1}^N (\xi_{k\Delta_l}^{i,l}-m_{k\Delta_l}^{N,l})(\xi_{k\Delta_l}^{i,l}-m_{k\Delta_l}^{N,l})^{\top} ,\\
m_{k\Delta_l}^{N,l} & =  \frac{1}{N }\sum_{i=1}^N \xi_{k\Delta_l}^{i,l},\\
P_{k\Delta_{l-1}}^{N,l-1} & =  \frac{1}{N -1}\sum_{i=1}^N (\xi_{k\Delta_{l-1}}^{i,l-1}-m_{k\Delta_{l-1}}^{N,l-1})(\xi_{k\Delta_{l-1}}^{i,l-1}-m_{k\Delta_{l-1}}^{N,l-1})^{\top}, \\
m_{k\Delta_{l-1}}^{N,l-1} & =  \frac{1}{N }\sum_{i=1}^N \xi_{k\Delta_{l-1}}^{i,l-1},
\end{align*}
and $\xi_0^{i,l}\stackrel{\textrm{i.i.d.}}{\sim}\mathcal{N}_{d_x}(\mathcal{M}_0,\mathcal{P}_0)$, $\xi_{0}^{i,l-1}=\xi_0^{i,l}$.
Then, one has the approximation of $[\eta_t^l-\eta_t^{l-1}](\varphi)$, $t\in\mathbb{N}_0$, $\varphi:\mathbb{R}^{d_x}\rightarrow\mathbb{R}$, given by
$$
[\eta_t^{N,l} -\eta_t^{N,l-1}](\varphi) = \frac{1}{N}\sum_{i=1}^N[\varphi(\xi_t^{i,l})-\varphi(\xi_t^{i,l-1})].
$$
Note that estimation of the filter at any time $t\in\{\Delta_{l-1},2\Delta_{l-1},\dots\}$ can easily be done in the same way as above.

Applying MLMC to our problem, consists of:
\begin{enumerate}
\item{Running the discretized Euler ensemble Kalman--Bucy filter (as in \eqref{eq:enkf_ps}) at level 0 with $N_0$ particles.}
\item{Independently for $l\in\{1,\dots,L\}$ and the sampling in 1., ~run the coupled discretized Euler ensemble Kalman--Bucy filter (as in \eqref{eq:ml_enkbf1}-\eqref{eq:ml_enkbf2}) with $N_l$ particles.}
\end{enumerate}
So one has an approximation for $t\in\mathbb{N}_0$
\begin{equation}\label{eq:main_est}
\eta_t^{ML}(\varphi):=\eta_t^{N_0,0}(\varphi) + \sum_{l-1}^L [\eta_t^{N_l,l} -\eta_t^{N_l,l-1}](\varphi).
\end{equation}
We use $\mathbb{E}[\cdot]$ to denote expectations w.r.t.~the law of the process just described (which naturally includes the data process).
Different types of Kalman-Bucy diffusions and discretizations are considered in Section \ref{sec:num}.


\section{Main Result}
\label{sec:theory}
Having introduced the relevant mathematics and our new methodology, in this section we provide our main theoretical result associated to the estimate \eqref{eq:main_est}.
All of the proofs are omitted and presented in the appendix. $N_{0:L}=(N_0,\dots,N_L)^{\top}$.

{Our strategy, is much the same as in Section \ref{sec:conv_enkbf}. We first consider an i.i.d.~coupled particle system: Let $l\in\mathbb{N}$ be fixed.
To analyze the multilevel d-EnBKF (MLd-EnKBF) we will prove results for the i.i.d.~coupled particle system for $(i,k)\in\{1,\dots,N\}\times\mathbb{N}_0$:
\begin{align}
\zeta_{(k+1)\Delta_l}^{i,l} & = \zeta_{k\Delta_l}^{i,l} + A\zeta_{k\Delta_l}^{i,l}\Delta_l + R_1^{1/2} [\overline{W}_{(k+1)\Delta_l}^i-\overline{W}_{k\Delta_l}^i]  
+ U_{k\Delta_l}^{l}\Big([Y^{{i}}_{(k+1)\Delta_l}-Y^{{i}}_{k\Delta_l}] \label{eq:iid1}\\ &-\Big[C\zeta_{k\Delta_l}^{i,l}\Delta_l + R_2^{1/2}[\overline{V}_{(k+1)\Delta_l}^i-\overline{V}_{k\Delta_l}^i]\Big]\Big), \nonumber \\
\zeta_{(k+1)\Delta_{l-1}}^{i,l-1} & =  \xi_{k\Delta_{l-1}}^{i,l-1} + A\zeta_{k\Delta_{l-1}}^{i,l-1}\Delta_{l-1} + R_1^{1/2} [\overline{W}_{(k+1)\Delta_{l-1}}^i-\overline{W}_{k\Delta_{l-1}}^i] 
+ U_{k\Delta_{l-1}}^{l-1}\Big([Y^{{i}}_{(k+1)\Delta_{l-1}}-Y^{{i}}_{k\Delta_{l-1}}] \label{eq:iid2}\\ &-\Big[C\zeta_{k\Delta_{l-1}}^{i,l-1}\Delta_{l-1}+ R_2^{1/2}[\overline{V}_{(k+1)\Delta_{l-1}}^i-\overline{V}_{k\Delta_{l-1}}^i]\Big]\Big),\nonumber
\end{align}
where $U_{k\Delta_s}^s=P_{k\Delta_s}^{s}C^{\top}R_2^{-1}$, $s\in\{l-1,l\}$, and one should recall \eqref{eq:deter_cov_evol}-\eqref{eq:deter_mean_evol}. Note also $\xi_0^{i,l}=\xi_0^{i,l-1}=\zeta_0^{i,l}=\zeta_0^{i,l-1}\stackrel{\textrm{i.i.d.}}{\sim}\mathcal{N}_{d_x}(\mathcal{M}_0,\mathcal{P}_0)$.
It should be stressed that the increments
$[\overline{W}_{(k+1)\Delta_l}^i-\overline{W}_{k\Delta_l}^i]$, $[\overline{V}_{(k+1)\Delta_l}^i-\overline{V}_{k\Delta_l}^i]$,  $[\overline{W}_{(k+1)\Delta_{l-1}}^i-\overline{W}_{k\Delta_{l-1}}^i]$ and $[\overline{V}_{(k+1)\Delta_{l-1}}^i-\overline{V}_{k\Delta_{l-1}}^i]$ are identical to those used in the actual MLd-EnKBF. Then we set
\begin{equation}\label{eq:main_est_iid}
\hat{\eta}_t^{ML}(\varphi):=\hat{\eta}_t^{N_0,0}(\varphi) + \sum_{l-1}^L [\hat{\eta}_t^{N_l,l} -\hat{\eta}_t^{N_l,l-1}](\varphi),
\end{equation}
where $[\hat{\eta}_t^{N_l,l} -\hat{\eta}_t^{N_l,l-1}](\varphi)=\frac{1}{N}\sum_{i=1}^N[\varphi(\zeta_{t}^{i,l})-\varphi(\zeta_{t}^{i,l-1})]$.}

{Now following the analysis associated to \eqref{eq:prob_est} one can not only establish that, almost surely,
$$
\lim_{\min_l N_l\rightarrow\infty}[\eta_t^{ML}-\hat{\eta}_t^{ML}](e) = 0,
$$
but that for any $\varepsilon>0$ and $q>0$:
$$
\mathbb{P}\left(\left|[\eta_t^{ML}-\hat{\eta}_t^{ML}](e)\right|>\varepsilon\right) \leq \frac{\mathsf{C}}{\varepsilon^{2q}}\left(\sum_{l=0}^L \frac{1}{N_l^{q/2}}\right),
$$
where $\mathsf{C}$ is a constant that can depend on $(L,q,t)$ but not $N_{0:L}$. So again, if $N_{0:L}$ are sufficiently large
the estimator that is actually used $\eta_t^{ML}(e)$ will share the properties of the estimator $\hat{\eta}_t^{ML}(e)$ and we simply consider the latter.
}

\begin{theorem}\label{theo:main_theo}
For any $T\in\mathbb{N}$ fixed and $t\in[0,T-1]$ there exists a $\mathsf{C}<+\infty$ such that for any $(L,N_{0:L})\in\mathbb{N}\times\{2,3,\dots\}^{L+1}$,
$$
\mathbb{E}\left[\left\|[\hat{\eta}_t^{ML}-\eta_t](e)\right\|_2^2\right] \leq \mathsf{C}\left(
\sum_{l=0}^L \frac{\Delta_l}{N_l} + \sum_{l=1}^L\sum_{q=1, q\neq l}^L\frac{\Delta_l\Delta_q}{N_lN_q} + \Delta_L^2
\right).
$$
\end{theorem}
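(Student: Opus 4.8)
The plan is to decompose the error $[\hat\eta_t^{ML}-\eta_t](e)$ into a bias part and a stochastic (variance) part, and to control each level's contribution using the coupling built into the i.i.d.\ system \eqref{eq:iid1}--\eqref{eq:iid2}. First I would write
$$
[\hat\eta_t^{ML}-\eta_t](e) = \Big([\hat\eta_t^{N_0,0}-\eta_t^0](e)\Big) + \sum_{l=1}^L \Big([\hat\eta_t^{N_l,l}-\hat\eta_t^{N_l,l-1}](e) - [\eta_t^l-\eta_t^{l-1}](e)\Big) + \big([\eta_t^L-\eta_t](e)\big),
$$
where $\eta_t^l$ is the deterministic Gaussian law with mean $m_t$ and covariance $P_t$ from \eqref{eq:iid_mean_rec}--\eqref{eq:iid_cov_rec}. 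The last term is a pure discretization bias: since $\eta_t^l(e)=m_t$ solves a first-order discretization of the Kalman--Bucy mean equation, a standard Euler-type global-error argument (controlled uniformly on $[0,T]$, which is where the fixed $T$ enters) gives $\|[\eta_t^L-\eta_t](e)\|_2 = \mathcal O(\Delta_L)$, hence its square contributes the $\Delta_L^2$ term. The remaining terms have zero conditional expectation given $\mathscr F_t$ by construction of the i.i.d.\ systems, so after taking $\mathbb E$ the cross terms between distinct levels vanish and we are left with a sum of per-level variances.

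The core of the argument is then estimating, for each $l\ge 1$, the quantity
$$
\frac{1}{N_l}\,\mathbb E\Big[\big\|\zeta_t^{1,l}-\zeta_t^{1,l-1} - (m_t^l-m_t^{l-1})\big\|_2^2\Big],
$$
and for $l=0$ the term $\tfrac{1}{N_0}\mathbb E[\|\zeta_t^{1,0}-m_t^0\|_2^2]$. The latter is $\mathcal O(1/N_0)$ since each $\zeta_t^{i,0}$ is Gaussian with bounded covariance $P_t$ (bounded uniformly on $[0,T]$ via the Riccati recursion). For the coupled increments, the key is that the two components \eqref{eq:iid1} and \eqref{eq:iid2} are driven by \emph{the same} Brownian and observation increments, so their difference is $\mathcal O(\Delta_{l-1})$ in $L_2$; more precisely I would set up a discrete Gr\"onwall recursion for $\delta_k := \mathbb E[\|\zeta_{k\Delta_l}^{1,l}-\zeta_{k\Delta_{l}/2\cdot\text{(aligned)}}^{1,l-1} - (m^l-m^{l-1})\|_2^2]$ over the common time grid $\{j\Delta_{l-1}\}$, comparing the fine chain run for two sub-steps against one coarse step. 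The local one-step discrepancy between a double fine step and a single coarse step is $\mathcal O(\Delta_{l-1}^2)$ in mean square (this uses boundedness of $A$, $C$, $R_i$, $P_t^l$, $P_t^{l-1}$ and the $\mathcal O(\Delta)$ difference of the gain matrices $U^l, U^{l-1}$, which itself follows from $P_t^l - P_t^{l-1} = \mathcal O(\Delta_{l-1})$ by comparing the two Riccati recursions), and summing $\Delta_{l-1}^{-1}$ such contributions with the Gr\"onwall amplification gives $\delta_k = \mathcal O(\Delta_l)$. Dividing by $N_l$ yields the $\Delta_l/N_l$ terms.

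Finally, the double sum $\sum_{l\ne q}\Delta_l\Delta_q/(N_lN_q)$ arises because, although the distinct-level increment summands in $\hat\eta_t^{ML}$ are \emph{not} independent (they all see the same data $Y$ and, in the present i.i.d.\ formulation, are coupled only through shared driving noise within a level but are built on independent particle labels across levels — one must check the precise dependence structure stipulated after \eqref{eq:main_est_iid}), their covariance is bounded by the product of the $L_2$ norms via Cauchy--Schwarz, each of which is $\mathcal O(\sqrt{\Delta_l}/\sqrt{N_l})$ by the per-level estimate above; squaring and cross-multiplying produces exactly $\Delta_l\Delta_q/(N_lN_q)$, with the constant absorbing the factor $L^2$ coming from the number of such pairs (legitimate since $\mathsf C$ is allowed to depend on $T$, and $L$ is implicitly tied to $T$ in the intended regime). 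The main obstacle I anticipate is the coupled Gr\"onwall estimate: carefully aligning the fine and coarse time grids, bounding the gain-matrix difference $U_t^l - U_t^{l-1}$ uniformly in $l$, and ensuring the constant in the one-step local error is genuinely $l$-independent (requiring uniform-in-$l$ bounds on $P_t^l$ and its increments on $[0,T]$) — everything else is bookkeeping on top of standard Euler/Marcinkiewicz--Zygmund machinery already invoked in Section \ref{sec:conv_enkbf}.
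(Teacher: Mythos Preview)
Your decomposition and the treatment of the bias term and the level-$0$ variance term are exactly what the paper does. For the per-level variance, the paper proceeds slightly differently: instead of a direct discrete Gr\"onwall comparing two fine steps to one coarse step, it proves a strong-error estimate between the Euler-discretised Kalman--Bucy diffusion and the continuous one (Lemma~\ref{lem:strong_error}, giving $\mathbb{E}\|\overline{X}_t-\overline{X}_t^l\|_2^2\leq\mathsf{C}\Delta_l^2$) and then triangulates $\zeta^{1,l}-\zeta^{1,l-1}$ through the continuous process. Your direct route is equally valid and arguably more elementary; either yields the $\Delta_l/N_l$ rate needed for Proposition~\ref{prop:var_term2}.

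The genuine problem is your handling of the cross terms, where you contradict yourself. Your \emph{first} claim is the correct one: conditionally on $\mathscr{F}_t$, each centred summand $Z_l:=[\hat\eta_t^{N_l,l}-\hat\eta_t^{N_l,l-1}](e)-[\eta_t^l-\eta_t^{l-1}](e)$ has mean zero (because $\zeta_t^{i,l}\mid\mathscr{F}_t\sim\mathcal{N}(m_t^l,P_t^l)$), and the particle systems at distinct levels are conditionally independent (they share $Y$ but use independent $(\overline{W},\overline{V})$). Hence $\mathbb{E}\langle Z_l,Z_q\rangle=\mathbb{E}\big[\langle\mathbb{E}[Z_l\mid\mathscr{F}_t],\mathbb{E}[Z_q\mid\mathscr{F}_t]\rangle\big]=0$ for $l\neq q$. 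This is precisely the ``independence of the coupled particle systems'' the paper invokes; the double sum in the theorem statement is simply slack.

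Your fallback Cauchy--Schwarz argument is doubly wrong and should be discarded. First, Cauchy--Schwarz on $\mathbb{E}\langle Z_l,Z_q\rangle$ gives at best $(\Delta_l\Delta_q/(N_lN_q))^{1/2}$ per pair, not $\Delta_l\Delta_q/(N_lN_q)$; there is no ``squaring'' step that recovers the latter. Second, absorbing an $L^2$ factor into $\mathsf{C}$ is illegitimate: the theorem asserts $\mathsf{C}$ is independent of $(L,N_{0:L})$, and $L$ is tied to the target accuracy $\epsilon$, not to $T$.
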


Theorem \ref{theo:main_theo} is proved for $t\in\{0,1,\dots,T\}$, however, if one considered the multilevel identity with the largest discretization $\Delta_m$, then
the result can easily be extended to the case $t\in\{0,\Delta_m,2\Delta_m,\dots,T\}$. The frequency of estimation is simply a by-product of considering the
discretization levels $\Delta_0,\dots,\Delta_L$, but this need not be the case in practice.

The subsequent discussion ignores the time parameter {$t$}; this is considered below.
The main implication of this result is as follows (and as considered in e.g.~\cite{MBG08}). Let $\epsilon>0$ be given, if one chooses {$L=\lfloor|\log(\epsilon)|/\log(2)\rfloor+\mathcal{O}(1)$}
and $N_l=\mathcal{O}(\epsilon^{-2}\Delta_l|\log(\epsilon)|)$ then the upper-bound in Theorem \ref{theo:main_theo} indicates that the mean square error is $\mathcal{O}(\epsilon^2)$.
The cost to achieve this is $\mathcal{O}(\sum_{l=0}^L N_l\Delta_l^{-1})$, which is $\mathcal{O}(\epsilon^{-2}\log(\epsilon)^2)$ under our choices of $L$ and $N_{0:L}$.
As noted previously, if one ran a single EnKBF then the cost to achieve the same mean square error is $\mathcal{O}(\epsilon^{-3})$. 
 
The results related to this section have several assumptions, which we discuss below through the following remarks.

\begin{remark}
\label{rem:1}
The first is the dependence of the constant $\mathsf{C}$ on $t, T$, which is typically exponential in $T$.
This exponential dependence could be dealt with by trying to use Foster-Lyapunov/martingale methods such as in \cite{DT18,TMK18}. In our context, we did not find an appropriate Lyapunov function and it seems intrinsically more challenging than for the discrete-time EnKF considered in \cite{TMK18}.  In connection to this, we have not assumed that the signal process is stable (e.g.~\cite{DT18}) which could possibly circumvent such time uniform bounds. In addition, as noted in \cite{BD20} time uniform errors under Euler-discretization may not be possible. {We note also that using such Foster-Lyapunov/martingale methods, one might also be able to analyze $\eta_t^{ML}(e)$ directly. Our analysis is based upon a time recursion, which is not amenable to the required uniform in time and level bounds.}
\end{remark}

\section{Numerical Examples}
\label{sec:num}
In this section we aim to verify the rates attained with respect to the MSE of the MLEnKBF. We will compare the accuracy and the computational cost, of the performance, to the EnKBF. This will be tested firstly on a {high dimensional example, of the order $10^4$, where after we increase the order of dimension to $10^5$}.  This is to motivate practical applications such as numerical weather prediction which operates with moderate to high-dimensional models. Our numerics will be conducted on an Ornstein--Uhlenbeck process, where we test both the MLEnKBF and a deterministic variant, which we introduce below.

\subsection{Deterministic EnKBF}

{A variant of the described EnKBF is the deterministic version, which contains no perturbed observations, was first introduced in \cite{BR12}, motivated from the EnKF \cite{SO08}. This modifies \eqref{eq:enkbf} to 
\begin{equation}
\label{eq:enkbf2}
d\xi_t^i = A\xi_t^i dt + R_1^{1/2} d\overline{W}_t^i + U_t^N\Bigg(dY_t -
\frac{C\xi_t^i+Cm_t}{2}dt\Bigg),
\end{equation}
where {$U_t^N=P_t^NC^{\top}R_2^{-1}$} and we exclude the additive noise from \eqref{eq:data} and replace it with a $Cm_tdt$ term, which includes the sample mean. It also known that in the linear setting, as $N \rightarrow \infty$, that \eqref{eq:enkbf2} is consistent with the moments of the vanilla KF. {The motivation for numerically studying \eqref{eq:enkbf2} is that the deterministic variant is a special case of the feedback particle filter  \cite{SKP19,TWM18,YMM13}. This recently developed particle filter has demonstrated promise, as it has been shown not to suffer from the curse of dimensionality, which occurs with the vanilla particle filter.  Also, in general, deterministic ensemble methods alleviate instability issues, and improve on accuracy as demonstrated through filters such as square-root filters \cite{SO08}.}}

We proceed in a similar fashion to derive a deterministic ML(D)EnKBF. We consider the Euler-discretized DEnKBF. Let $\Delta_l=2^{-l}$ then we will generate the system, similarly as before, for $(i,k)\in\{1,\dots,N\}\times\mathbb{N}_0$
\begin{align}
\label{eq:enkf_ps2}
\xi_{(k+1)\Delta_l}^i &= \xi_{k\Delta_l}^i + A\xi_{k\Delta_l}^i\Delta_l + R_1^{1/2} [\overline{W}_{(k+1)\Delta_l}^i-\overline{W}_{k\Delta_l}^i] 
+ U_{k\Delta_l}^N\Bigg([Y_{(k+1)\Delta_l}-Y_{k\Delta_l}] \\ & -
\Bigg[\frac{(C\xi_{k\Delta_l}^i\Delta_l+Cm_{k\Delta_l}\Delta_l)}{2}\Bigg]\Bigg), \nonumber
\end{align}
where {$U_{k\Delta_l}^N=P_{k\Delta_l}^NC^{\top}R_2^{-1}$} and using the same notation and with the mean and covariance defined in the same way as \eqref{eq:enkf_ps}. Now we can consider the coupled system given as
\begin{align*}
\xi_{(k+1)\Delta_l}^{i,l} &= \xi_{k\Delta_l}^{i,l} + A\xi_{k\Delta_l}^{i,l}\Delta_l + R_1^{1/2} [\overline{W}_{(k+1)\Delta_l}^i-\overline{W}_{k\Delta_l}^i] 
+ U_{k\Delta_l}^{N,l}\Bigg([Y_{(k+1)\Delta_l}-Y_{k\Delta_l}] \\&{-}
\Bigg[\frac{(C\xi_{k\Delta_l}^{i,l}\Delta_l+m^{l}_{k\Delta_l}\Delta_l)}{2}\Bigg]\Bigg),  \\
\xi_{(k+1)\Delta_{l-1}}^{i,l-1} &= \xi_{k\Delta_{l-1}}^{i,l-1} + A\xi^{l-1}_{k\Delta_{l-1}}\Delta_{l-1} + R_1^{1/2} [\overline{W}_{(k+1)\Delta_{l-1}}^i-\overline{W}_{k\Delta_{l-1}}^i] 
+ U_{k\Delta_{l-1}}^{N,l-1}\Bigg([Y_{(k+1)\Delta_{l-1}}-Y_{k\Delta_{l-1}}] \\& {-}
\Bigg[\frac{(C\xi_{k\Delta_{l-1}}^i\Delta_{l-1}+m^{l-1}_{k\Delta_{l-1}}\Delta_{l-1})}{2}\Bigg]\Bigg),
\end{align*}
where {$U_{k\Delta_s}^{N,s}=P_{k\Delta_s}^{N,s}C^{\top}R_2^{-1}$, $s\in\{l-1,l\}$.}

{
\begin{remark}
It is important to note that our results, i.e. Proposition \ref{prop:var_term1_sec_state} and Theorem \ref{theo:main_theo}, have not been proved for the DEnKBF and MLDEnKBF. Despite this we expect that there are no significant further challenges in proving such analogous results as the innovation process is the only part of the algorithm which is different. We omit such a proof due to any prolonging of the paper, and leave this for future work. 
\end{remark}
}
\subsection{Numerical Results}
{
 Recall that our signal dynamics is given in the linear form 
$$
dX_t  =  A X_t dt + R_1^{1/2} dW_t.
$$
For the Ornstein--Uhlenbeck (OU) process we take normally randomly generated values for the matrices $A, C, R_1, R_2$, and the initial condition is chosen as ${X_0\sim\mathcal{N}_{d_x}(6,I_d)}$. {Our numerical examples will be considered for firstly a $d_x=10^4$ and $d_y=10^4$ signal and observational process. For the higher dimensional example we choose $d_x=10^5$ and $d_y=10^5$ respectively.  We will test this on the MLEnKBF and the deterministic version, i.e. the MLDEnKBF. For the implementation we take levels {$l\in\{9,10,11,12,13,14\}$} with mesh defined as $\Delta =2^{-l}$, with an Euler--Maruyama discretization for our $10^4$ experiment. For the $10^5$ experiment we proceed similarly but use levels {$l\in\{10,11,12,13,14,15\}$}. Our specific choice for such high levels, is that we can exploit this using HPC. We have omitted using lower levels as for certain levels, i.e. $l=\{0,1,2,3\}$ our numerics indicate little difference in the cost, and it is known that in some situations the EnKBF can produce unstable solutions, using an Euler discretization \cite{BD20}. \\
This phenomena occured for lower levels. We also specify a final time of $T=100$. {Our comparison will be based on the rate for the log cost vs. the log MSE, where the cost of the MLEnKBF is given as $\sum_{l=0}^L N_l\Delta_l^{-1}$, while the cost for the EnKBF it is $N\Delta_l^{-1}$}. The values of $N_l$ and $L$ are chosen as described in Section \ref{sec:theory}. Our numerical experiments were tested on KAUST's supercomputer Shaheen. The Code is written in Python and can be downloaded from \href{https://github.com/fangyuan-ksgk/Multilevel-Ensemble-Kalman-Bucy-Filter}{\textbf{{https://https://github.com/fangyuan-ksgk/Multilevel-Ensemble-Kalman-Bucy-Filter}}}.}}

Our numerical findings are provided in Figures \ref{fig:100d-100d} and \ref{fig:1000d-1000d} which show the effect of MLMC on both the EnKBF and the DEnKBF. The circles and crosses denote the different levels. As we can see from the associated MSEs at level $l=9$, they are relatively similar in Figure \ref{fig:100d-100d}, and similarly for level $l=10$ in Figure \ref{fig:1000d-1000d}.  However once we refine the mesh towards finer levels  we notice a difference in computational cost, where the MLEnKBF methods outperform their counterparts. As expected we see that the rate changes which is verified from the theory of  MLEnKBF, which we recall suggests a reduction of $\mathcal{O}(\epsilon^{-3})$ to $\mathcal{O}(\epsilon^{-2}\log(\epsilon)^2)$. Interestingly when we analyze the deterministic counterpart, {we see an almost identical decay rate of the MSE w.r.t. to the cost, which could suggest similar rates for the MLDEnKBF, as mentioned in Theorem \ref{theo:main_theo}. As our analysis does not cover the later, we leave this as potential future work.}}





\begin{figure}[!htb]
	\begin{subfigure}[c]{0.49\textwidth}
	\includegraphics[width=1.\textwidth]{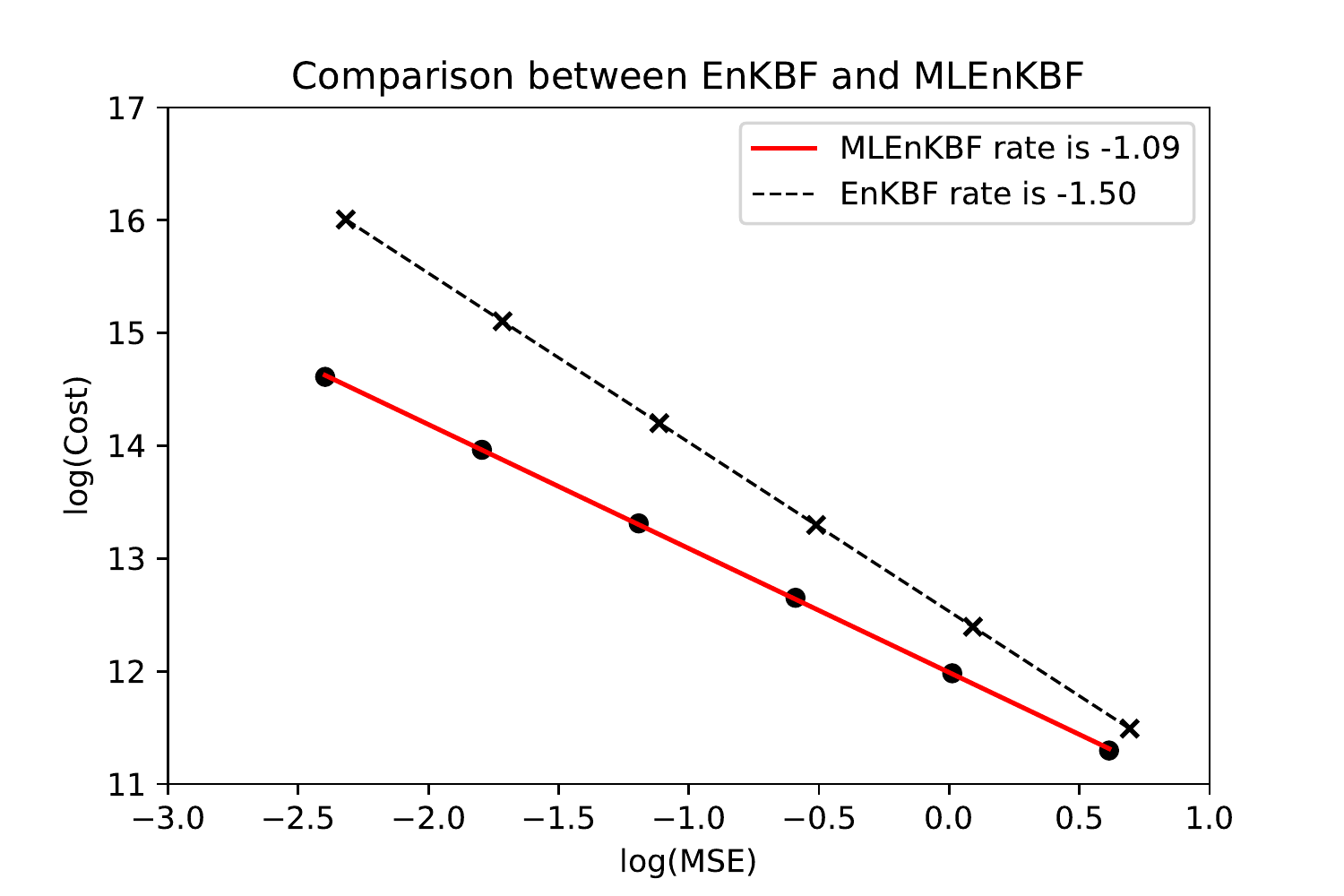}
	\end{subfigure}
	\begin{subfigure}[c]{0.5\textwidth}
	\includegraphics[width=1\textwidth]{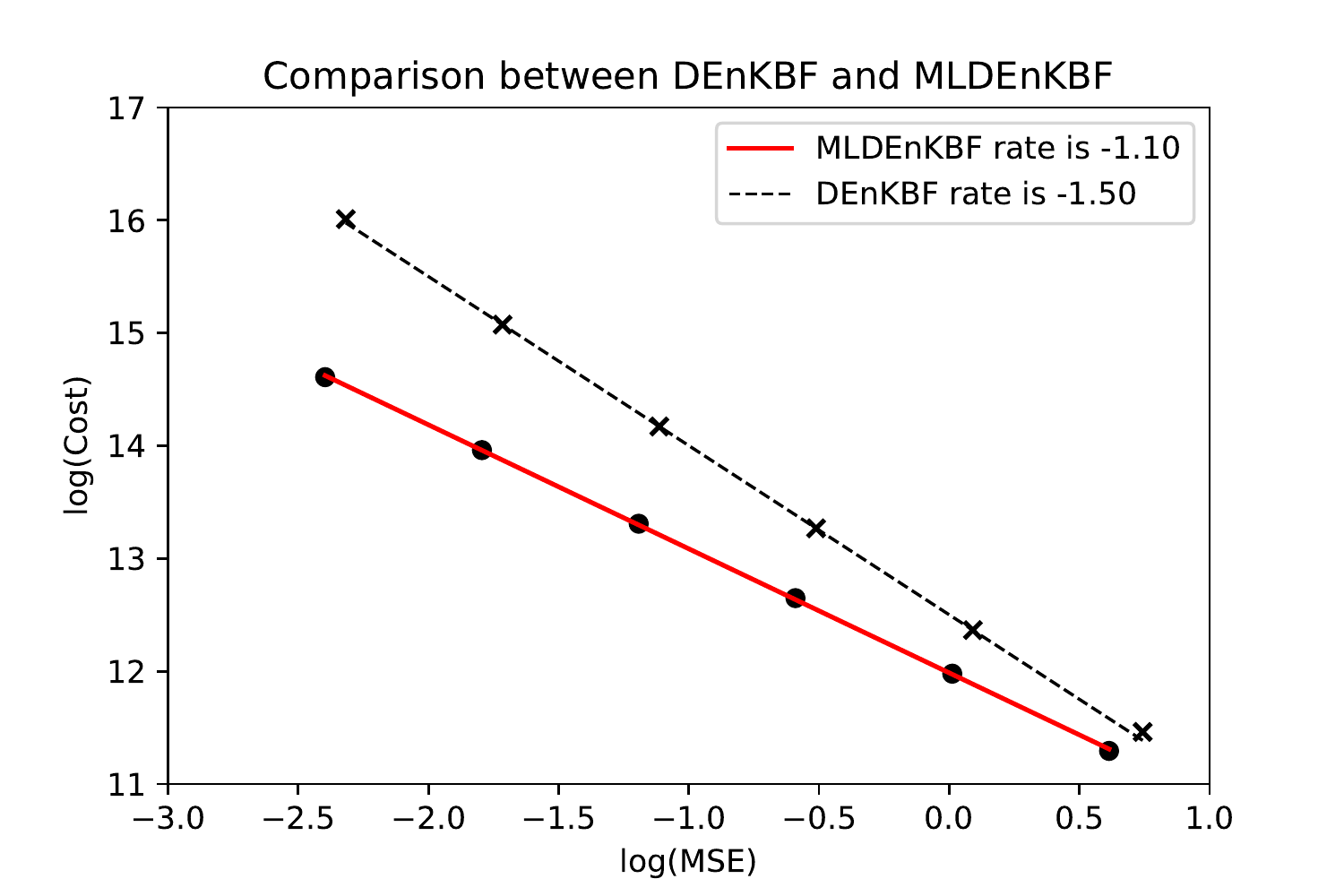}
	\end{subfigure}
    \caption{{Comparison of the multilevel EnKBF algorithms vs. the EnKBF algorithms for a dimension size of $d_x=10^4$ and $d_y=10^4$.}}
    \label{fig:100d-100d}
\end{figure} 

\begin{figure}[!htb]
	\begin{subfigure}[c]{0.49\textwidth}
	\includegraphics[width=1.\textwidth]{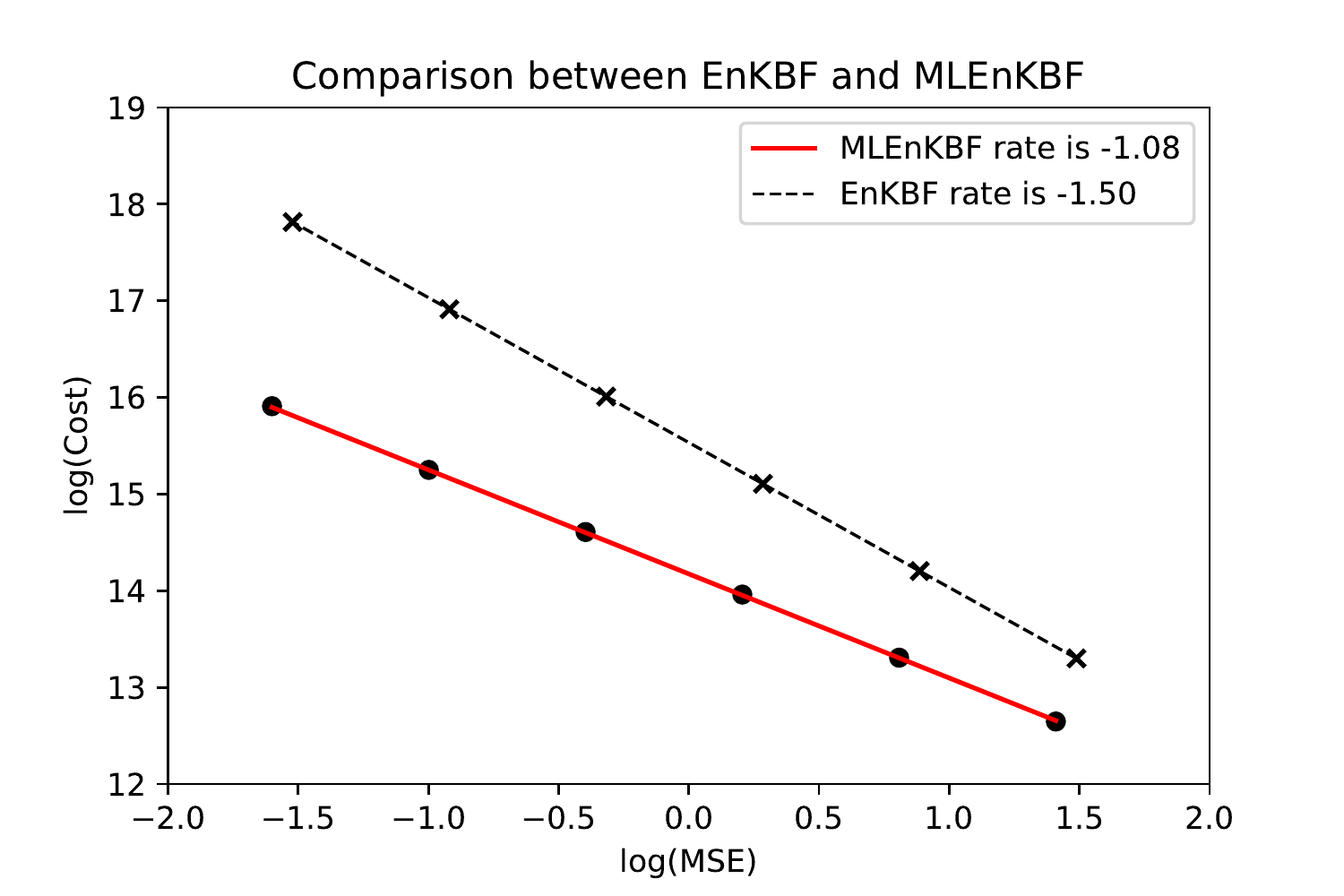}
	\end{subfigure}
	\begin{subfigure}[c]{0.5\textwidth}
	\includegraphics[width=1\textwidth]{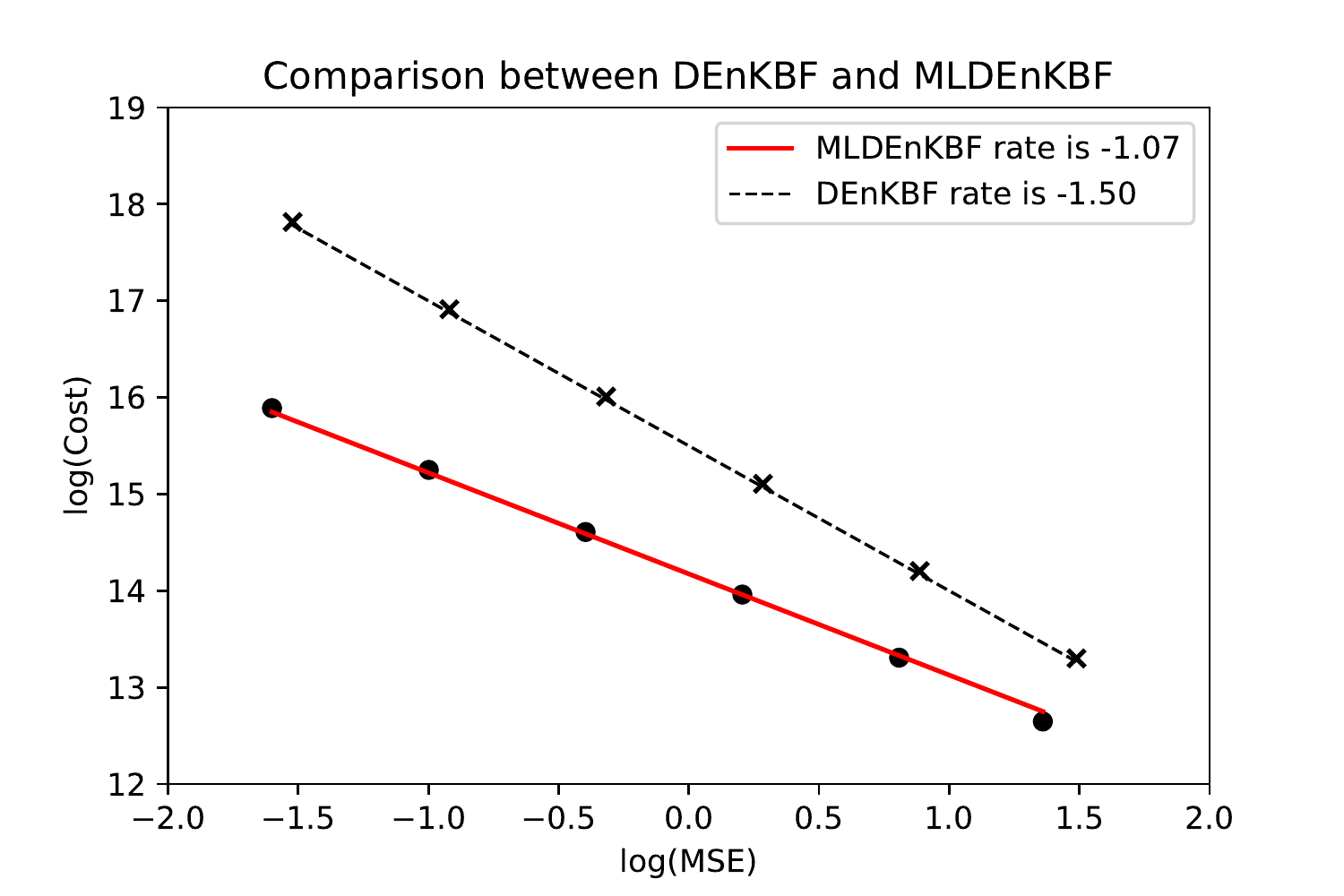}
	\end{subfigure}
    \caption{{Comparison of the multilevel EnKBF algorithms vs. the EnKBF algorithms for a dimension size of $d_x=10^5$ and $d_y=10^5$.}}
    \label{fig:1000d-1000d}
\end{figure}

\section{Conclusion}
\label{sec:conc}
Multilevel strategies for uncertainty quantification have become a very powerful tool at attaining a certain order of mean square error whilst reducing the computational complexity. In this article we introduced and developed an {`ideal'} multilevel methodology for the ensemble Kalman--Bucy filter, which is a continuous-time process filtering method, which coincides with the optimal filter in the linear and Gaussian setting. From our results we presented a Monte-Carlo approximation and convergence for the discretization of the EnKBF, which includes non-uniform, in time, propagation of chaos results. This was then considered for the multilevel case, where we also analyzed the MSE which is related through the statistical and deterministic bias. Our theory was verified numerical experiments taken at dimensions of {order $\sim10^5$}, where we compared the MLEnKBF to its deterministic counterpart.

As this paper is a first step at aiming to provide multilevel strategies for continuous-time Kalman filtering, this naturally leads to a number of interesting directions to consider. Firstly, as mentioned above, we numerically tested our MLEnKF of interest, as well as its deterministic counterpart. However we alleviated any further analysis due to potential prolonging the length of the paper, although we believe the results should also apply similarly. This could also be done for the optimal-transport inspired EnKBF, which is discussed further in \cite{BD20}.  Secondly one could also consider applying localization techniques for multilevel strategies. This has been shown for the discrete MLEnKF \cite{FMS20} and the EnKBF \cite{WT19}, so this would be a natural extension. Another potential direction to consider an is alternative discretization of the diffusion related to the data \eqref{eq:data} up to its finest level, however it is not entirely obvious how one should approach this. Finally as mentioned in Remark \ref{rem:1} it would be of interest to attain uniform (in time) estimates from the MLEnKBF. {Lastly it would be of interest to produce a ML estimator of the EnKBF, which is uniform w.r.t. to the discretization level $l$. As mentioned conventional methods do not naturally apply here, and would provide a fuller understanding of this.}

\section*{Acknowledgments}
This work was supported by KAUST baseline funding.

\appendix

\section{Introduction}
\label{sec:app}

The appendix features several results which are needed to prove Theorem \ref{theo:main_theo}.
This appendix is broken into three sections and is designed to be read in order. It will not be straight-forward to read this appendix out of order.
The first Appendix \ref{app:denknf} features a collection of results associated to the discretized Ensemble Kalman-Bucy Filter (d-EnKBF). {Note 
this is not to be confused with the deterministic Ensemble Kalman-Bucy Filter (DEnKBF) which we used in Section \ref{sec:num}.}
In particular we prove a non-uniform propagation of chaos result at the end of the section.
The second Appendix \ref{app:en_euler} {focuses} upon establishing a strong error result for the Euler discretized Kalman-Bucy diffusion, critical to our final result.
{These type of errors are in the analysis of the i.i.d.~system.}
The final Appendix \ref{app:ml_res} provides results for the i.i.d.~multilevel estimator, concluding with the proof of Theorem \ref{theo:main_theo}.

\section{Results for the Discretized Ensemble Kalman-Bucy Filter}\label{app:denknf}

Throughout the section $l\in\mathbb{N}_0$ is fixed. To shorten the subsequent notations
we write $\textrm{SRicc}(P) = (A-PS)P(A^{\top}-SP)$. For a square matrix $A$, $\textrm{Sym}(A) := \frac{1}{2}(A+A^{\top})$.
We present the following result, intended to parallel \cite[Theorem 3.1]{DT18}, for the d-EnKBF. {The recursion is central to our analysis, of this section,
which seeks to establish the type of convergence in Proposition \ref{prop:var_term1_sec_state} and the property in \eqref{eq:prob_est}, which relies on the afore-mentioned propagation of chaos.}

\begin{proposition}\label{prop:denkf_rec}
For any $(k,l,N)\in\mathbb{N}_0\times\mathbb{N}_0\times\{2,3,\dots\}$ we have:\\\\
(i) Mean Recursion:
\begin{equation}\label{eq:enkf_mean_rec}
m_{(k+1)\Delta_l}^N - m_{k\Delta_l}^N   =  \Big(A - P_{k\Delta_l}^NS\Big)\Delta_l m_{k\Delta_l}^N + U_{k\Delta_l}^N[Y_{(k+1)\Delta_l}-Y_{k\Delta_l}] + \alpha_{k\Delta_l}^N\frac{Z_k}{\sqrt{N}},
\end{equation}
where $Z_k = \frac{1}{\sqrt{N}}\sum_{i=1}^N \omega_k^i$, independently of all other random variables for each $(i,k)\in\{1,\dots,N\}\times \mathbb{N}_0$ $\omega_k^i \stackrel{\textrm{i.i.d.}}{\sim}\mathcal{N}_{d_x}(0,I)$ and $\alpha_{k\Delta_l}^N=(R_1+P_{k\Delta_l}^NSP_{k\Delta_l}^N)^{1/2}\Delta_l^{1/2}$.\\\\
(ii) Covariance Recursion:
\begin{align}
\label{eq:enkf_cov_rec}
P_{(k+1)\Delta_l}^N - P_{k\Delta_l}^N & =\textrm{\emph{Ricc}}(P_{k\Delta_l}^N)\Delta_l + \textrm{\emph{SRicc}}(P_{k\Delta_l}^N)\Delta_l^2
+\alpha_{k\Delta_l}^N \Big(\tfrac{1}{N-1}\sum_{i=1}^N(\omega_k^i-\bar{\omega}_k)(\omega_k^i-\bar{\omega}_k)^{\top} - I \Big)\alpha_{k\Delta_l}^N \\ \nonumber &+2\textrm{\emph{Sym}}\Big(\alpha_{k\Delta_l}^N\Big(\tfrac{1}{N-1}\sum_{i=1}^N(\omega_k^i-\bar{\omega}_k)(\xi_{k\Delta_l}^i-m_{k\Delta_l}^N)^{\top}\Big)(B_{k\Delta_l}^N)^{\top}\Big),  \nonumber
\end{align}
where $\bar{\omega}_k=\tfrac{1}{N}\sum_{i=1}^N \omega_k^i$ and $B_{k\Delta_l}^N=(I+A\Delta_l-P_{k\Delta_l}^NS\Delta_l)$.
\end{proposition}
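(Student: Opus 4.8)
The plan is to derive both identities by a direct computation: substitute the Euler update \eqref{eq:enkf_ps} into the definitions of $m_{k\Delta_l}^N$ and $P_{k\Delta_l}^N$ and collect terms. The only non-routine ingredient, which I would set up first, is the representation of the per-particle driving noise through i.i.d.\ standard Gaussians $\omega_k^i$ that serve simultaneously for the mean recursion (via $Z_k$) and the covariance recursion (via $\omega_k^i-\bar\omega_k$).

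First I would rewrite \eqref{eq:enkf_ps}. Using $U_{k\Delta_l}^N C = P_{k\Delta_l}^N C^{\top}R_2^{-1}C = P_{k\Delta_l}^N S$ and writing $B_{k\Delta_l}^N := I+A\Delta_l-P_{k\Delta_l}^N S\Delta_l$, the update reads $\xi_{(k+1)\Delta_l}^i = B_{k\Delta_l}^N\xi_{k\Delta_l}^i + U_{k\Delta_l}^N[Y_{(k+1)\Delta_l}-Y_{k\Delta_l}] + \nu_k^i$, where the observation increment is common to all particles and $\nu_k^i := R_1^{1/2}[\overline{W}_{(k+1)\Delta_l}^i-\overline{W}_{k\Delta_l}^i] - U_{k\Delta_l}^N R_2^{1/2}[\overline{V}_{(k+1)\Delta_l}^i-\overline{V}_{k\Delta_l}^i]$. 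Let $\mathcal{G}_k$ be the $\sigma$-algebra generated by $\mathscr{F}_{k\Delta_l}$ and $(\xi_{k\Delta_l}^1,\dots,\xi_{k\Delta_l}^N)$; the Brownian increments over $[k\Delta_l,(k+1)\Delta_l]$ are independent of $\mathcal{G}_k$, so conditionally on $\mathcal{G}_k$ the $\nu_k^i$ are i.i.d.\ centred Gaussian with covariance $\Delta_l\big(R_1+U_{k\Delta_l}^N R_2(U_{k\Delta_l}^N)^{\top}\big)=\Delta_l\big(R_1+P_{k\Delta_l}^N S P_{k\Delta_l}^N\big)=\alpha_{k\Delta_l}^N(\alpha_{k\Delta_l}^N)^{\top}$, where I used $U_{k\Delta_l}^N R_2 (U_{k\Delta_l}^N)^{\top}=P_{k\Delta_l}^N C^{\top}R_2^{-1}CP_{k\Delta_l}^N=P_{k\Delta_l}^N S P_{k\Delta_l}^N$. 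Since $R_1$ is invertible and $P_{k\Delta_l}^N S P_{k\Delta_l}^N\succeq 0$, the symmetric matrix $\alpha_{k\Delta_l}^N$ is invertible, so I can set $\omega_k^i := (\alpha_{k\Delta_l}^N)^{-1}\nu_k^i$; conditionally on $\mathcal{G}_k$ these are i.i.d.\ $\mathcal{N}_{d_x}(0,I)$, and because this conditional law is non-random the $\omega_k^i$ are unconditionally i.i.d.\ $\mathcal{N}_{d_x}(0,I)$ and independent of $\mathcal{G}_k$; iterating over $k$ shows the whole array $\{\omega_k^i\}$ is i.i.d.\ and independent of the observation path and the initial ensemble, which gives the independence claimed in the statement. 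Writing $Z_k=\tfrac1{\sqrt N}\sum_{i}\omega_k^i$ and $\bar\omega_k=\tfrac1N\sum_i\omega_k^i=Z_k/\sqrt N$, we have $\nu_k^i=\alpha_{k\Delta_l}^N\omega_k^i$.

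For (i), averaging the rewritten update over $i$ gives $m_{(k+1)\Delta_l}^N = B_{k\Delta_l}^N m_{k\Delta_l}^N + U_{k\Delta_l}^N[Y_{(k+1)\Delta_l}-Y_{k\Delta_l}] + \alpha_{k\Delta_l}^N Z_k/\sqrt N$, and $B_{k\Delta_l}^N m_{k\Delta_l}^N - m_{k\Delta_l}^N=(A-P_{k\Delta_l}^N S)\Delta_l m_{k\Delta_l}^N$ yields \eqref{eq:enkf_mean_rec}. For (ii), subtracting this from the rewritten update gives the centred recursion $\xi_{(k+1)\Delta_l}^i-m_{(k+1)\Delta_l}^N = B_{k\Delta_l}^N(\xi_{k\Delta_l}^i-m_{k\Delta_l}^N)+\alpha_{k\Delta_l}^N(\omega_k^i-\bar\omega_k)$. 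Forming $P_{(k+1)\Delta_l}^N=\tfrac1{N-1}\sum_i(\cdot)(\cdot)^{\top}$ and expanding the square into four sums, the ``$\xi$--$\xi$'' sum is $B_{k\Delta_l}^N P_{k\Delta_l}^N(B_{k\Delta_l}^N)^{\top}$, the ``$\omega$--$\omega$'' sum is $\alpha_{k\Delta_l}^N\big(\tfrac1{N-1}\sum_i(\omega_k^i-\bar\omega_k)(\omega_k^i-\bar\omega_k)^{\top}\big)\alpha_{k\Delta_l}^N$, and the two cross sums are transposes of one another and combine to $2\,\textrm{Sym}\big(\alpha_{k\Delta_l}^N(\tfrac1{N-1}\sum_i(\omega_k^i-\bar\omega_k)(\xi_{k\Delta_l}^i-m_{k\Delta_l}^N)^{\top})(B_{k\Delta_l}^N)^{\top}\big)$. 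Splitting $\tfrac1{N-1}\sum_i(\omega_k^i-\bar\omega_k)(\omega_k^i-\bar\omega_k)^{\top}=I+(\cdot-I)$ and using $\alpha_{k\Delta_l}^N(\alpha_{k\Delta_l}^N)^{\top}=\Delta_l(R_1+P_{k\Delta_l}^N S P_{k\Delta_l}^N)$ leaves the deterministic part $B_{k\Delta_l}^N P_{k\Delta_l}^N(B_{k\Delta_l}^N)^{\top}+\Delta_l(R_1+P_{k\Delta_l}^N S P_{k\Delta_l}^N)-P_{k\Delta_l}^N$. Finally, with $D:=A-P_{k\Delta_l}^N S$ so that $B_{k\Delta_l}^N=I+D\Delta_l$, this equals $\Delta_l\big(DP+PD^{\top}+R_1+PSP\big)+\Delta_l^2\,DPD^{\top}$ (writing $P=P_{k\Delta_l}^N$); using symmetry of $S$ and $P$ one has $DP+PD^{\top}+PSP=AP+PA^{\top}-PSP$, i.e.\ the $\Delta_l$-coefficient is $\textrm{Ricc}(P)$, while $DPD^{\top}=(A-PS)P(A^{\top}-SP)=\textrm{SRicc}(P)$, which is precisely \eqref{eq:enkf_cov_rec}.

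The main obstacle is the construction in the second paragraph: establishing that the $\omega_k^i$ are genuine i.i.d.\ standard normals, independent of the entire past (and of the observation increment $[Y_{(k+1)\Delta_l}-Y_{k\Delta_l}]$), even though the matrix $\alpha_{k\Delta_l}^N$ used to define them is itself a random function of the time-$k\Delta_l$ ensemble. This is handled by the conditioning argument above and relies on invertibility of $\alpha_{k\Delta_l}^N$, which follows from the standing assumption that $R_1^{1/2}$ (hence $R_1$) is invertible; everything else is bookkeeping.
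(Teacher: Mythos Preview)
Your proof is correct and follows essentially the same route as the paper's: rewrite the particle update as $\xi^i_{(k+1)\Delta_l}=B^N_{k\Delta_l}\xi^i_{k\Delta_l}+U^N_{k\Delta_l}[Y_{(k+1)\Delta_l}-Y_{k\Delta_l}]+\alpha^N_{k\Delta_l}\omega^i_k$, average over $i$ for (i), and expand the quadratic for (ii). Your treatment is in fact slightly cleaner in two respects---you give an explicit construction of the $\omega^i_k$ via $(\alpha^N_{k\Delta_l})^{-1}$ and a conditioning argument (the paper just postulates them), and for (ii) you pass first to the centred differences $\xi^i_{k\Delta_l}-m^N_{k\Delta_l}$ and square, whereas the paper expands the uncentred outer products into four terms $T_1,\dots,T_4$ and cancels cross-terms---but these are purely expository differences.
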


\begin{proof}
Using the notation of the statement of the result, for the d-EnKF, the recursion \eqref{eq:enkf_ps} becomes, for $i\in\{1,\dots,N\}$:
\begin{equation}\label{eq:enkf_mod_rep}
\xi_{(k+1)\Delta_l}^i = B_{k\Delta_l}^N \xi_{k\Delta_l}^i + U_{k\Delta_l}^N[Y_{(k+1)\Delta_l}-Y_{k\Delta_l}] + \alpha_{k\Delta_l}^N \omega_k^i.
\end{equation}

To establish \eqref{eq:enkf_mean_rec}, we have by using \eqref{eq:enkf_mod_rep} that
\begin{align*}
m_{(k+1)\Delta_l}^N - m_{k\Delta_l}^N   & =  \frac{1}{N}\sum_{i=1}^N\Big\{\xi_{(k+1)\Delta_l}^i-\xi_{k\Delta_l}^i\Big\} \\
& =  \frac{1}{N}\sum_{i=1}^N\Big\{(B_{k\Delta_l}^N -I)\xi_{k\Delta_l}^i + U_{k\Delta_l}^N [Y_{(k+1)\Delta_l}-Y_{k\Delta_l}] + \alpha_{k\Delta_l}^N \omega_k^i\Big\}\\
& =  (B_{k\Delta_l}^N -I)\frac{1}{N}\sum_{i=1}^N\xi_{k\Delta_l}^i + U_{k\Delta_l}^N [Y_{(k+1)\Delta_l}-Y_{k\Delta_l}]  + \alpha_{k\Delta_l}^N\frac{1}{N}\sum_{i=1}^N\omega_k^i\\
& =  \Big(A - P_{k\Delta_l}^NS\Big)\Delta_l m_{k\Delta_l}^N + U_{k\Delta_l}^N [Y_{(k+1)\Delta_l}-Y_{k\Delta_l}] + \alpha_{k\Delta_l}^N\frac{Z_k}{\sqrt{N}}.
\end{align*}

For \eqref{eq:enkf_cov_rec} we have
$$
P_{(k+1)\Delta_l}^N - P_{k\Delta_l}^N = \frac{1}{N-1}\sum_{i=1}^N \xi_{(k+1)\Delta_l}^i(\xi_{(k+1)\Delta_l}^i)^{\top} - \frac{N}{N-1} m_{(k+1)\Delta_l}^N(m_{(k+1)\Delta_l}^N)^{\top}
- P_{k\Delta_l}^N.
$$
Then using \eqref{eq:enkf_mod_rep} to substitute for $\xi_{(k+1)\Delta_l}^i$ and using \eqref{eq:enkf_mean_rec} to substitute for $m_{(k+1)\Delta_l}^N$ we have the decomposition
$$
P_{(k+1)\Delta_l}^N - P_{k\Delta_l}^N = \sum_{j=1}^4 T_j,
$$
where
\begin{align*}
T_1 & =  \frac{1}{N-1}\sum_{i=1}^N \Big\{\Big(B_{k\Delta_l}^N \xi_{k\Delta_l}^i + U_{k\Delta_l}^N[Y_{(k+1)\Delta_l}-Y_{k\Delta_l}]\Big) 
\Big(B_{k\Delta_l}^N \xi_{k\Delta_l}^i + U_{k\Delta_l}^N[Y_{(k+1)\Delta_l}-Y_{k\Delta_l}]\Big)^{\top} 
\Big\}  \\
& - \frac{N}{N-1}\Big(B_{k\Delta_l}^N m_{k\Delta_l}^N + U_{k\Delta_l}^N[Y_{(k+1)\Delta_l}-Y_{k\Delta_l}]\Big) \Big(B_{k\Delta_l}^N m_{k\Delta_l}^N + U_{k\Delta_l}^N[Y_{(k+1)\Delta_l}-Y_{k\Delta_l}]\Big)^{\top} - P_{k\Delta_l}^N,\\
T_2 & =  \frac{1}{N-1}\sum_{i=1}^N \Big\{\Big(B_{k\Delta_l}^N \xi_{k\Delta_l}^i + U_{k\Delta_l}^N[Y_{(k+1)\Delta_l}-Y_{k\Delta_l}]\Big)(\alpha_{k\Delta_l}^N\omega_k^i)^{\top}\Big\} 
\\&-  \frac{N}{N-1}\Big(B_{k\Delta_l}^N m_{k\Delta_l}^N + U_{k\Delta_l}^N[Y_{(k+1)\Delta_l}-Y_{k\Delta_l}]\Big)\\ &\times(\frac{1}{\sqrt{N}}\alpha_{k\Delta_l}^NZ_k)^{\top},
\end{align*}
\begin{align*}
T_3 & =  \frac{1}{N-1}\sum_{i=1}^N\Big\{\alpha_{k\Delta_l}^N\omega_k^i(B_{k\Delta_l}^N \xi_{k\Delta_l}^i+U_{k\Delta_l}^N[Y_{(k+1)\Delta_l}-Y_{k\Delta_l}])^{\top}\Big\}\\ &- 
\frac{N}{N-1}\alpha_{k\Delta_l}^N\frac{Z_k}{\sqrt{N}}(B_{k\Delta_l}^N m_{k\Delta_l}^N+U_{k\Delta_l}^N[Y_{(k+1)\Delta_l}-Y_{k\Delta_l}])^{\top},\\
T_4 & =  \frac{1}{N-1}\sum_{i=1}^N\alpha_{k\Delta_l}^N\omega_k^i(\omega_k^i)^{\top}\alpha_{k\Delta_l}^N -\frac{1}{(N-1)}\alpha_{k\Delta_l}^NZ_k Z_k^{\top}\alpha_{k\Delta_l}^N.
\end{align*}
To complete the proof, we shall simply perform some linear algebra operations on each of the terms $T_1,\dots, T_4$ and sum the resulting expressions.

For $T_1$ we have that, as the cross-product terms will cancel
{
\begin{align}
T_1 & =   B_{k\Delta_l}^N\Big(\frac{1}{N-1}\sum_{i=1}^N \xi_{k\Delta_l}^i(\xi_{k\Delta_l}^i)^{\top}\Big)(B_{k\Delta_l}^N)^{\top} - \frac{N}{N-1}m_{k\Delta_l}^N(m_{k\Delta_l}^N)^{\top} - P^N_{k\Delta_l} \nonumber\\
& =   B_{k\Delta_l}^NP_{k\Delta_l}^N(B_{k\Delta_l}^N)^{\top} - P^N_{k\Delta_l}\nonumber\\
& =  \Big((P_{k\Delta_l}^N-AP_{k\Delta_l}^N\Delta_l - P_{k\Delta_l}^NSP_{k\Delta_l}^N\Delta_l)(I+A^{\top}\Delta_l - SP_{k\Delta_l}^N\Delta_l)\Big) - P^N_{k\Delta_l}\nonumber\\
& =  P_{k\Delta_l}^NA^{\top}\Delta_l - 2P_{k\Delta_l}^NSP_{k\Delta_l}^N\Delta_l + A P_{k\Delta_l}^N\Delta_l + \Delta_l^2(A-P^N_{k\Delta_l}S)P^N_{k\Delta_l}(A^{\top}-SP^N_{k\Delta_l})\nonumber\\
& =   \textrm{Ricc}(P_{k\Delta_l}^N)\Delta_l  + \Delta_l^2(A-P^N_{k\Delta_l}S)P^N_{k\Delta_l}(A^{\top}-SP^N_{k\Delta_l})  - R_1\Delta_l - P_{k\Delta_l}^NSP_{k\Delta_l}^N\Delta_l\nonumber\\
& =  \textrm{Ricc}(P_{k\Delta_l}^N)\Delta_l  + \Delta_l^2(A-P^N_{k\Delta_l}S)P^N_{k\Delta_l}(A^{\top}-SP^N_{k\Delta_l}) - (\alpha_{k\Delta_l}^N)^2.\label{eq:term1}
\end{align}
}
For $T_2$ we have that, as the additional terms cancel
\begin{align}
T_2 & =   B_{k\Delta_l}^N\frac{1}{N-1}\sum_{i=1}^N\xi_{k\Delta_l}^i(\alpha_{k\Delta_l}^N\omega_k^i)^{\top} - \frac{1}{N-1}B_{k\Delta_l}^N m_{k\Delta_l}^N\Big(
\alpha_{k\Delta_l}^N\sum_{i=1}^N\omega_k^i
\Big)^{\top}
\nonumber\\
& =   B_{k\Delta_l}^N\Big(
\frac{{N}}{N-1}\sum_{i=1}^N\xi_{k\Delta_l}^i(\omega_k^i)^{\top} - m_{k\Delta_l}^N(\bar{\omega}_k)^{\top}
\Big)\alpha_{k\Delta_l}^N\nonumber \\
& =   B_{k\Delta_l}^N\Big(\frac{1}{N-1}\sum_{i=1}^N(\xi_{k\Delta_l}^i-m_{k\Delta_l}^N)(\omega_k^i-\bar{\omega}_k)^{\top}\Big)\alpha_{k\Delta_l}^N.\label{eq:term2}
\end{align}
For $T_3$ using almost the same argument
\begin{equation}
T_3 =  \alpha_{k\Delta_l}^N\Big(\frac{1}{N-1}\sum_{i=1}^N(\omega_k^i-\bar{\omega}_k)(\xi_{k\Delta_l}^i-m_{k\Delta_l}^N)^{\top}\Big)(B_{k\Delta_l}^N)^{\top}.\label{eq:term3}
\end{equation}

Finally for $T_4$
\begin{align}
T_4 & =  \alpha_{k\Delta_l}^N\Big(\frac{1}{N-1}\sum_{i=1}^N\omega_k^i(\omega_k^i)^{\top}-\frac{N}{N-1}\bar{\omega}_k(\bar{\omega}_k)^{\top}\Big)\alpha_{k\Delta_l}^N\nonumber\\
& =  \alpha_{k\Delta_l}^N
\Big(\frac{1}{N-1}\sum_{i=1}^N(\omega_k^i-\bar{\omega}_k)(\omega_k^i-\bar{\omega}_k)^{\top}\Big)\alpha_{k\Delta_l}^N. \label{eq:term4}
\end{align}
Summing the R.H.S.~of each of \eqref{eq:term1}-\eqref{eq:term4} yields \eqref{eq:enkf_cov_rec} and the proof is hence completed.
\end{proof}


For a vector $X$ we write $X(i)$ as it $i^{th}-$element and for a matrix $A$ we write $A(i,j)$ as its $(i,j)^{th}$ element. From herein we give  our proofs of $\mathbb{L}_q-$bounds for $q\in[1,\infty)$ as the proof for $q\in(0,1)$ follows by Jensen's inequality; {since this generalization does not complicate matters, we add this condition to our statements}. Throughout our proofs $\mathsf{C}$ will denote a finite constant whose value may change from line-to-line.
The dependencies of the constant on the various quantities associated to the algorithm will be made clear in a given result statement.

{
We will introduce the following assumptions which will hold from herein, but not be added to any statements. 
For a square matrix, $B$ say, we denote by $\mu(B)$ as the maximum eigenvalue of $\textrm{Sym}(B)$.
\begin{enumerate}
\item{We have that $\mu(A)<0$.}
\item{There exists a $\mathsf{C}<+\infty$ such that for any $(k,l)\in\mathbb{N}_0^2$ we have that 
\begin{equation}\label{eq:bound_deter_cov}
\max_{(j_1,j_2)\in\{1,\dots,d_x\}^2}|P_{k\Delta_l}(j_1,j_2)|\leq\mathsf{C}.
\end{equation}
}
\end{enumerate}
We note that 1.~is typically used in the time stability of the hidden diffusion process $X_t$, see for instance \cite{DT18}. In the case of 2.~we expect that it can be verified under 1.,
that $S=\mathsf{C}I$ with $\mathsf{C}$ a positive constant and some controllability and observability assumptions (e.g.~\cite[eq.~(20]{DT18}). Under such assumptions, the Riccati equation has a solution and moreover, by \cite[Proposition 5.3]{DT18} $\mathcal{P}_t$ is exponentially stable w.r.t.~the Frobenius norm; so that this type of bound exists in continuous time. The challenge is then to verify this type of stability on time discretization; we leave this important problem to future work.
}

{The following results are now proved sequentially, to obtain first a type of $\mathbb{L}_q-$bound on the elements of $P_{k\Delta_l}^N-P_{k\Delta_l}$, which converges to zero as $N$ grows (Lemma \ref{lem:p_lq}). This convergence
then permits one to prove the final propagation of chaos result (Theorem \ref{theo:prop}) which is crucial to our analysis in Sections \ref{sec:conv_enkbf} and \ref{sec:theory}. To consider constructing an $\mathbb{L}_q-$bound on the elements $P_{k\Delta_l}^N-P_{k\Delta_l}$ we consider \eqref{eq:enkf_cov_rec}.
This implies that one must show that the appropriate moments exist (Lemmata \ref{lem:y_inc}, \ref{lem:xi_lq}, \ref{lem:al_lq}) and that each element of those equations are sufficiently controlled (Lemmata \ref{lem:p_first}, \ref{lem:p_sec}).}

\begin{lem}\label{lem:y_inc}
For any $q\in(0,\infty)$ there exists a $\mathsf{C}<+\infty$ such that for any $(k,l,j)\in\mathbb{N}_0^2\times\{1,\dots,d_y\}$:{
$$
\mathbb{E}[|[Y_{(k+1)\Delta_l}-Y_{k\Delta_l}](j)|^q]^{1/q} \leq \mathsf{C}\Delta_l^{1/2}.
$$}
\end{lem}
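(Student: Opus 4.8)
The statement asks for an $\mathbb{L}_q$-bound of order $\Delta_l^{1/2}$ on each coordinate of the observation increment $[Y_{(k+1)\Delta_l}-Y_{k\Delta_l}](j)$. The plan is to use the linear observation model \eqref{eq:data}, which gives
$$
Y_{(k+1)\Delta_l}-Y_{k\Delta_l} = C\int_{k\Delta_l}^{(k+1)\Delta_l}X_s\,\mathrm{d}s + R_2^{1/2}\big(V_{(k+1)\Delta_l}-V_{k\Delta_l}\big),
$$
so that the $j$-th coordinate is a sum of a drift (time-integral) term and a Gaussian-increment term. I would bound the two pieces separately with Minkowski's inequality and then combine.

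\textbf{Key steps.} First I would recall (or cite, e.g.\ via the standard linear SDE theory used in \cite{DT18}) the uniform-in-time moment bound $\sup_{s\in[0,T]}\mathbb{E}[\|X_s\|_2^q]<\infty$ for the Ornstein--Uhlenbeck signal \eqref{eq:signal}; this uses the assumption $\mu(A)<0$ (Assumption 1) and Gaussianity of $X_0$. Second, for the drift term I would write $\big|C\int_{k\Delta_l}^{(k+1)\Delta_l}X_s\,\mathrm{d}s\,(j)\big| \leq \|C\|\int_{k\Delta_l}^{(k+1)\Delta_l}\|X_s\|_2\,\mathrm{d}s$, apply Minkowski's integral inequality to get $\mathbb{E}[|\cdot|^q]^{1/q}\leq \|C\|\int_{k\Delta_l}^{(k+1)\Delta_l}\mathbb{E}[\|X_s\|_2^q]^{1/q}\,\mathrm{d}s \leq \mathsf{C}\Delta_l$, which is dominated by $\mathsf{C}\Delta_l^{1/2}$ for $\Delta_l\leq 1$. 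Third, for the Brownian term, $R_2^{1/2}(V_{(k+1)\Delta_l}-V_{k\Delta_l})(j)$ is a centred Gaussian with variance bounded by $\|R_2\|\,\Delta_l$ (a constant times $\Delta_l$), so its $q$-th absolute moment is $\mathsf{C}\Delta_l^{q/2}$ and hence the $\mathbb{L}_q$-norm is $\mathsf{C}\Delta_l^{1/2}$. Finally I would combine via Minkowski's inequality in $\mathbb{L}_q$ to obtain the claimed $\mathsf{C}\Delta_l^{1/2}$ bound, absorbing the lower-order $\Delta_l$ term into the constant (using $\Delta_l=2^{-l}\leq 1$).

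\textbf{Main obstacle.} There is no serious analytic obstacle here; the only point requiring care is the uniform-in-time control of the signal moments $\sup_{s}\mathbb{E}[\|X_s\|_2^q]$, which must be invoked to ensure the constant $\mathsf{C}$ does not depend on $k$ or $l$. Given Assumption 1 ($\mu(A)<0$) this is a standard estimate for the stationary-type bound on a linear Gaussian SDE, so I would either quote it from \cite{DT18} or give a one-line Grönwall/variation-of-constants argument. The rest is routine application of Minkowski's inequality and Gaussian moment formulas, with the observation that $\Delta_l\leq 1$ makes $\Delta_l$ subordinate to $\Delta_l^{1/2}$.
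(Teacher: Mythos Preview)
Your proposal is correct and follows essentially the same approach as the paper: both use the integral representation of the observation increment, split via Minkowski into a drift term and a Brownian term, bound the drift term by $\mathsf{C}\Delta_l$ using the uniform-in-time signal moment bound cited from \cite{DT18} (their eq.~(60)), bound the Gaussian term by $\mathsf{C}\Delta_l^{1/2}$, and absorb $\Delta_l\leq\Delta_l^{1/2}$. The only cosmetic difference is that the paper writes everything coordinate-wise with explicit sums and uses Jensen's inequality on the time integral, whereas you use norm notation and Minkowski's integral inequality; these are equivalent.
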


\begin{proof}
Using the integral representation
$$
Y_{(k+1)\Delta_l}-Y_{k\Delta_l} = \int_{k\Delta_l}^{(k+1)\Delta_l}CX_sds + \int_{k\Delta_l}^{(k+1)\Delta_l}R_2^{1/2}dV_s,
$$
one has that by the Minkowski inequality
\begin{align}
\nonumber
\mathbb{E}[|[Y_{(k+1)\Delta_l}-Y_{k\Delta_l}](j)|^q]^{1/q} &\leq  \mathbb{E}\Big[\Big|\sum_{j_1=1}^{d_x} C(j,j_1)\int_{k\Delta_l}^{(k+1)\Delta_l} X_s(j_1)ds\Big|^q\Big]^{1/q} \\ &+  \mathbb{E}\Big[\Big|\sum_{j_1=1}^{d_y}R_2^{1/2}(j,j_1)[V_{(k+1)\Delta_l}-V_{k\Delta_l}](j_1)\Big|^q\Big]^{1/q}  \\
\nonumber
&\leq {\Delta_l^{1-1/q}\sum_{j_1=1}^{d_x} |C(j,j_1)|\mathbb{E}\Big[\int_{k\Delta_l}^{(k+1)\Delta_l}|X_s(j_1)|^qds\Big]^{1/q}} \\ &+
\label{eq:y_inc_1}
+ {\sum_{j_1=1}^{d_y}|R_2^{1/2}(j,j_1)|\mathbb{E}[|[V_{(k+1)\Delta_l}-V_{k\Delta_l}](j_1)|^q]^{1/q},} 
\end{align}
{where we have used Jensen's inequality} to go to \eqref{eq:y_inc_1}. 
By using \cite[eq.~(60)]{DT18} one can deduce that
\begin{equation}\label{eq:y_inc_2}
\mathbb{E}\left[|X_s(j_1)|^q\right] \leq \mathsf{C}.
\end{equation}
Therefore, using \eqref{eq:y_inc_2}  and $\mathbb{E}[|[V_{(k+1)\Delta_l}-V_{k\Delta_l}](j_1)|^q]^{1/q}\leq\mathsf{C}\Delta_l^{1/2}$ in \eqref{eq:y_inc_1} we have that:
$$
\mathbb{E}[|[Y_{(k+1)\Delta_l}-Y_{k\Delta_l}](j)|^q]^{1/q} \leq \mathsf{C}\Delta_l^{1/2}
$$
and the proof is complete.

\end{proof}

{
\begin{lem}\label{lem:xi_lq}
For any $(q,k,l)\in(0,\infty)\times\mathbb{N}_0^2$ there exists a $\mathsf{C}<+\infty$ such that for any $(N,i)\in\mathbb{N}_0^2\times\{1,\dots,d_x\}\times\{2,3,\dots\}\times\{1,\dots,N\}$:
$$
\mathbb{E}[|\xi_{k\Delta_l}^i(j)|^q]^{1/q} \leq \mathsf{C}.
$$
\end{lem}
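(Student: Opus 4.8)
The plan is to argue by induction on $k$, with $l$ held fixed, proving at each stage the statement ``for every $q\in[1,\infty)$ there is a finite constant $\mathsf{C}$, depending on $(q,k,l)$ but not on $N$ or $i$, with $\mathbb{E}[|\xi_{k\Delta_l}^i(j)|^q]^{1/q}\leq\mathsf{C}$ for all admissible $(N,i,j)$'' (the case $q\in(0,1)$ then following by Jensen, as per the convention in the text). The base case $k=0$ is immediate, since $\xi_0^i\sim\mathcal{N}_{d_x}(\mathcal{M}_0,\mathcal{P}_0)$ has finite moments of every order. For the inductive step the workhorse is the one-step representation \eqref{eq:enkf_mod_rep}, $\xi_{(k+1)\Delta_l}^i = B_{k\Delta_l}^N\xi_{k\Delta_l}^i + U_{k\Delta_l}^N[Y_{(k+1)\Delta_l}-Y_{k\Delta_l}] + \alpha_{k\Delta_l}^N\omega_k^i$: I would bound the $\mathbb{L}_q$ norm of the $j$-th coordinate of each of the three summands and then combine them via Minkowski.

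The structural fact that makes this go through is that $P_{k\Delta_l}^N=\tfrac{1}{N-1}\sum_i(\xi_{k\Delta_l}^i-m_{k\Delta_l}^N)(\xi_{k\Delta_l}^i-m_{k\Delta_l}^N)^{\top}$ is a sample average of quadratic forms in the coordinates of the time-$k\Delta_l$ particles, with coefficients $\tfrac{1}{N-1}$, $\tfrac{N}{N-1}$ bounded uniformly in $N\geq 2$; hence each entry of $B_{k\Delta_l}^N=I+A\Delta_l-P_{k\Delta_l}^NS\Delta_l$ and of $U_{k\Delta_l}^N=P_{k\Delta_l}^NC^{\top}R_2^{-1}$ is such an average of degree-$\leq 2$ monomials, and each entry of $\alpha_{k\Delta_l}^N(\alpha_{k\Delta_l}^N)^{\top}=(R_1+P_{k\Delta_l}^NSP_{k\Delta_l}^N)\Delta_l$ one of degree-$\leq 4$ monomials, in those coordinates (with the sample mean $m_{k\Delta_l}^N$ itself an average of coordinates, hence harmless by Minkowski). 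Consequently $(B_{k\Delta_l}^N\xi_{k\Delta_l}^i)(j)$ is a finite, $N$-uniform linear combination of empirical averages of products of at most three particle coordinates, and applying Minkowski to the averages and the generalized Hölder inequality to each product bounds its $\mathbb{L}_q$ norm by a finite product of terms $\mathbb{E}[|\xi_{k\Delta_l}^{i'}(j')|^{3q}]^{1/(3q)}$, finite by the inductive hypothesis at exponent $3q$. For the second summand I would first use Cauchy--Schwarz to peel off $[Y_{(k+1)\Delta_l}-Y_{k\Delta_l}](j')$, whose $\mathbb{L}_{2q}$ norm is $\leq\mathsf{C}\Delta_l^{1/2}$ by Lemma \ref{lem:y_inc}, leaving degree-$\leq 2$ particle monomials to be handled as above (now invoking the hypothesis at exponent $4q$). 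For the third summand, since $\omega_k^i\sim\mathcal{N}_{d_x}(0,I)$ is, by construction, independent of $\alpha_{k\Delta_l}^N$, I would bound $|(\alpha_{k\Delta_l}^N\omega_k^i)(j)|\leq\|\alpha_{k\Delta_l}^N\|_{\mathrm{op}}\|\omega_k^i\|_2$ with $\|\alpha_{k\Delta_l}^N\|_{\mathrm{op}}^2\leq\mathrm{Tr}(\alpha_{k\Delta_l}^N(\alpha_{k\Delta_l}^N)^{\top})=\Delta_l\big(\mathrm{Tr}(R_1)+\mathrm{Tr}(P_{k\Delta_l}^NSP_{k\Delta_l}^N)\big)$, and use independence to factorize the expectation, reducing once more to a degree-$\leq 4$ particle monomial moment controlled by the inductive hypothesis at exponent $2q$. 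Summing the three estimates via Minkowski delivers the bound at step $k+1$.

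The main obstacle is precisely this super-linear (polynomial) dependence of the one-step map on the previous particle cloud, entering through $P_{k\Delta_l}^N$: it forces the induction to carry \emph{all} moment orders at once, since controlling order $q$ at time $(k+1)\Delta_l$ consumes bounds of order up to $4q$ at time $k\Delta_l$, and it is why the constants are allowed to grow with $k$ (and $l$) — which is harmless here, as the statement is not uniform in $(k,l)$. The only other minor point, the matrix square root in $\alpha_{k\Delta_l}^N$, is sidestepped by passing to $\alpha_{k\Delta_l}^N(\alpha_{k\Delta_l}^N)^{\top}$ via the operator-norm bound above (equivalently, by conditioning on $\alpha_{k\Delta_l}^N$ and exploiting the Gaussianity of $\omega_k^i$); note that the standing assumptions $\mu(A)<0$ and \eqref{eq:bound_deter_cov} are not needed, only the crude $(k,l)$-dependent control afforded by the recursion.
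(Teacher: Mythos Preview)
Your proposal is correct and follows essentially the same route as the paper: induction on $k$ via the one-step representation \eqref{eq:enkf_mod_rep}, a three-term Minkowski split, and control of each term by invoking the inductive hypothesis at an inflated moment order (you use $3q$ or $4q$ via generalized H\"older, the paper uses $4q$ via Young's inequality and exchangeability). The only cosmetic differences are in the handling of $T_3$: the paper conditions on the time-$k\Delta_l$ $\sigma$-field and uses the conditional Gaussianity of $\alpha_{k\Delta_l}^N\omega_k^i$ to reduce to $(\alpha_{k\Delta_l}^N)^2(j,j)$, whereas you pass through the operator norm and trace; both devices serve the same purpose of avoiding the matrix square root, as you note.
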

}

\begin{proof}
The proof consists of deriving a recursion on $k$ noting that the case $k=0$ is trivial as $\xi^i_0(j)\stackrel{\textrm{i.i.d.}}{\sim}\mathcal{N}(\mathcal{M}_0(j),\mathcal{P}_0(j,j))$, $i\in\{1,\dots,N\}$.

We begin with the following upper-bound, that can be derived by combining \eqref{eq:enkf_mod_rep} along with the Minkowski inequality,
$$
\mathbb{E}[|\xi_{k\Delta_l}^i(j)|^q]^{1/q} \leq \sum_{j=1}^3 T_j,
$$
where
\begin{align*}
T_1 & =  \mathbb{E}\Big[\Big|\sum_{j_1=1}^{d_x} \Big\{\mathbb{I}_{\{j\}}(j_1)+A(j,j_1)\Delta_l-\sum_{j_2=1}^{d_x} P_{k\Delta_l}^N(j,j_2)S(j_2,j_1)\Delta_l\Big\}\xi_{k\Delta_l}^{i}(j_1)\Big|^q\Big]^{1/q},\\
T_2 & =  \mathbb{E}\Big[\Big|\sum_{j_1=1}^{d_x}\sum_{j_2=1}^{d_x}P_{k\Delta_l}^N(j,j_1)\tilde{C}(j_1,j_2)[Y_{(k+1)\Delta_l}-Y_{k\Delta_l}](j_2)\Big|^q\Big]^{1/q},\\
T_3 & =  \mathbb{E}\Big[\Big|\sum_{j_1=1}^{d_x}\alpha_{k\Delta_l}^N(j,j_1) \omega_k^i(j_1)\Big|^q\Big]^{1/q},
\end{align*}
{where $\mathbb{I}_{\{j\}}(j_1)$ is the indicator of the set $\{j\}$, i.e.~$\mathbb{I}_{\{j\}}(j)=1$}, and $\tilde{C}=C^{\top} R_2^{-1}$. We will bound each of the three terms $T_j$ $j\in\{1,2,3\}$ and sum the bounds to obtain our recursion.

For the term $T_1$ we have by several applications of the Minkowski inequality
\begin{equation}\label{eq:xi_lq_1}
T_1 \leq \sum_{j_1=1}^{d_x}\Big\{|\mathbb{I}_{\{j\}}(j_1)+A(j,j_1)\Delta_l|\mathbb{E}[|\xi_{k\Delta_l}^{i}(j_1)|^q]^{1/q}+\sum_{j_2=1}^{d_x}|S(j_2,j_1)|\Delta_l
\mathbb{E}[|P_{k\Delta_l}^N(j,j_2)\xi_{k\Delta_l}^{i}(j_1)|^q]^{1/q}
\Big\}.
\end{equation}
We will now focus on the right most expectation of the R.H.S.~of \eqref{eq:xi_lq_1}. Applying Young's inequality and, for $\mathbb{E}[|P_{k\Delta_l}^N(j,j_2)|^{2q}]^{1/(2q)}$ using Minkowski and the exchangeability of the particle system, we have the upper-bound
$$
\mathbb{E}[|P_{k\Delta_l}^N(j,j_2)\xi_{k\Delta_l}^{i}(j_1)|^q]^{1/q} \leq  \mathsf{C}\Big(\mathbb{E}[|[\xi_{k\Delta_l}^{i}-m_{k\Delta_l}](j_2)
[\xi_{k\Delta_l}^{i}-m_{k\Delta_l}](j)|^{2q}]^{1/(2q)}+\mathbb{E}[|\xi_{k\Delta_l}^{i}(j_1)|^{2q}]^{1/(2q)}\Big).
$$
Applying Young's inequality to the left-most quantity on the R.H.S., one can deduce, using the exchangeability of the particle system and Jensen for
the right-most quantity on the R.H.S.~that
$$
\mathbb{E}[|P_{k\Delta_l}^N(j,j_2)\xi_{k\Delta_l}^{i}(j_1)|^q]^{1/q} \leq \mathsf{C}\max_{j_1\in\{1,\dots,d_x\}}\{\mathbb{E}[|\xi_{k\Delta_l}^{i}(j_1)|^{4q}]^{1/(4q)}\}.
$$
Thus returning to \eqref{eq:xi_lq_1} we can deduce the upper-bound
\begin{align}
\nonumber
T_1 & \leq \mathsf{C}\max_{(j_1,j_2)\in\{1,\dots,d_x\}^2}\{|\mathbb{I}_{\{j_2\}}(j_1)+A(j_2,j_1)\Delta_l|+|S(j_2,j_1)|\}\max_{j_1\in\{1,\dots,d_x\}}\{\mathbb{E}[|\xi_{k\Delta_l}^{i}(j_1)|^{4q}]^{1/(4q)}\}
\nonumber\\
& \leq  \mathsf{C}\max_{j_1\in\{1,\dots,d_x\}}\{\mathbb{E}[|\xi_{k\Delta_l}^{i}(j_1)|^{4q}]^{1/(4q)}\}.
\label{eq:xi_lq_2}
\end{align}

For $T_2$ by Minkowski and Cauchy-Schwarz
$$
T_2 \leq \sum_{j_1=1}^{d_x}\sum_{j_2=1}^{d_x}|\tilde{C}(j_1,j_2)|\mathbb{E}[|P_{k\Delta_l}^N(j,j_2)|^{2q}]^{1/(2q)}\mathbb{E}[|[Y_{(k+1)\Delta_l}-Y_{k\Delta_l}](j_2)|^{2q}]^{1/(2q)},
$$
then applying Lemma \ref{lem:y_inc} along with using the above calculations for $T_1$ to control $\mathbb{E}[|P_{k\Delta_l}^N(j,j_2)|^{2q}]^{1/(2q)}$, we have the upper-bound:
{
\begin{equation}
T_2 \leq \mathsf{C}\max_{j_1\in\{1,\dots,d_x\}}\{\mathbb{E}[|\xi_{k\Delta_l}^{i}(j_1)|^{4q}]^{1/(4q)}\}.
\label{eq:xi_lq_3}
\end{equation}}

For $T_3$ by using the fact that, conditional on the $\sigma-$field generated by the particle system and the observations {up-to} time $k\Delta_l$
$[\alpha_{k\Delta_l}^N\omega_k^i](j)\sim\mathcal{N}(0,[\alpha_{k\Delta_l}^N\alpha_{k\Delta_l}^N](j,j))$, we have that 
$$
T_3 \leq \mathsf{C}\Delta_l^{1/2}\mathbb{E}[|(R_1+P_{k\Delta_l}^NSP_{k\Delta_l}^N)(j,j)|^{q/2}]^{1/q}.
$$
Using the above calculations for $T_1$ to control $\mathbb{E}[|P_{k\Delta_l}^N(j,j_2)|^{2q}]^{1/(2q)}$, one can use very {standard} calculations to deduce that
\begin{equation}
T_3 \leq
 \mathsf{C}\left(\max_{j_1\in\{1,\dots,d_x\}}\{\mathbb{E}[|\xi_{k\Delta_l}^{i}(j_1)|^{4q}]^{1/(4q)}\}+\Delta_l^{1/2}\right).
\label{eq:xi_lq_4}
\end{equation}
Now combining \eqref{eq:xi_lq_2}-\eqref{eq:xi_lq_4} we have the upper-bound
$$
\max_{j_1\in\{1,\dots,d_x\}}\{\mathbb{E}[|\xi_{(k+1)\Delta_l}^{i}(j_1)|^{q}]^{1/(q)}\} \leq
\mathsf{C}\left(\max_{j_1\in\{1,\dots,d_x\}}\{\mathbb{E}[|\xi_{k\Delta_l}^{i}(j_1)|^{4q}]^{1/(4q)}\}+\Delta_l^{1/2}\right).
 $$
Applying this recursively yields that
\begin{eqnarray*}
\max_{j_1\in\{1,\dots,d_x\}}\{\mathbb{E}[|\xi_{(k+1)\Delta_l}^{i}(j_1)|^{q}]^{1/(q)}\} \leq  \\  \mathsf{C}\left(
\max_{j_1\in\{1,\dots,d_x\}}\{\mathbb{E}[|\xi_{0}^{i}(j_1)|^{4q(k+1)}]^{1/(4q(k+1)}\}+(k+1)\Delta_l^{1/2}\right)
 \end{eqnarray*}
 and the proof is hence completed.

\end{proof}


\begin{lem}\label{lem:al_lq}
{For any $(q,k,l)\in(0,\infty)\times\mathbb{N}_0^2$ there exists a $\mathsf{C}<+\infty$ such that for any $(j_1,j_2,N)\in\{1,\dots,d_x\}^2\times\{2,3,\dots\}$:
$$
\mathbb{E}[|\alpha_{k\Delta_l}^N(j_1,j_2)|^q]^{1/q} \leq \mathsf{C}\Delta_l^{1/2}.
$$}
\end{lem}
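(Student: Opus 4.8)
The plan is to recall the definition $\alpha_{k\Delta_l}^N=(R_1+P_{k\Delta_l}^NSP_{k\Delta_l}^N)^{1/2}\Delta_l^{1/2}$ and reduce the $\mathbb{L}_q$-bound on the entries of $\alpha_{k\Delta_l}^N$ to an $\mathbb{L}_q$-bound on the entries of $R_1+P_{k\Delta_l}^NSP_{k\Delta_l}^N$. Since $\Delta_l^{1/2}$ factors out cleanly, it suffices to show that $\mathbb{E}[|(R_1+P_{k\Delta_l}^NSP_{k\Delta_l}^N)^{1/2}(j_1,j_2)|^q]^{1/q}\leq\mathsf{C}$. First I would observe that for a symmetric positive semi-definite matrix $M$, every entry of $M^{1/2}$ is bounded (up to a dimension-dependent constant) by $\|M^{1/2}\|_{\mathrm{op}}=\|M\|_{\mathrm{op}}^{1/2}$, which in turn is controlled by $(\sum_{j_1,j_2}|M(j_1,j_2)|)^{1/2}$ (or any convenient matrix norm). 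Hence the problem is reduced to showing $\mathbb{E}[|(R_1+P_{k\Delta_l}^NSP_{k\Delta_l}^N)(j_1,j_2)|^{q/2}]^{2/q}\leq\mathsf{C}$ for every pair $(j_1,j_2)$; here one must be slightly careful to track that raising to the $1/2$ power inside halves the integrability index, so one applies the previous step with exponent $2q$ in mind, but since the forthcoming bounds hold for all $q\in(0,\infty)$ this causes no difficulty.

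Next I would expand $(R_1+P_{k\Delta_l}^NSP_{k\Delta_l}^N)(j_1,j_2) = R_1(j_1,j_2) + \sum_{j_3,j_4} P_{k\Delta_l}^N(j_1,j_3)S(j_3,j_4)P_{k\Delta_l}^N(j_4,j_2)$. The term $R_1(j_1,j_2)$ is a fixed constant, so it is trivially bounded. For the remaining double sum, by Minkowski's inequality and since $S$ has fixed finite entries, it suffices to bound $\mathbb{E}[|P_{k\Delta_l}^N(j_1,j_3)P_{k\Delta_l}^N(j_4,j_2)|^{q}]^{1/q}$, and by Cauchy--Schwarz this is at most $\mathbb{E}[|P_{k\Delta_l}^N(j_1,j_3)|^{2q}]^{1/(2q)}\mathbb{E}[|P_{k\Delta_l}^N(j_4,j_2)|^{2q}]^{1/(2q)}$. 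So everything comes down to an $\mathbb{L}_q$-bound, uniform in $N$, on the entries of the sample covariance $P_{k\Delta_l}^N$.

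To get that bound I would reuse exactly the computation performed inside the proof of Lemma \ref{lem:xi_lq}: writing $P_{k\Delta_l}^N(j,j_2)$ in terms of $\tfrac{1}{N-1}\sum_i(\xi_{k\Delta_l}^i-m_{k\Delta_l}^N)(j)(\xi_{k\Delta_l}^i-m_{k\Delta_l}^N)(j_2)$, applying the triangle/Minkowski inequality, exchangeability of the particle system to replace the empirical average by a single exchangeable term, and then Young's and Cauchy--Schwarz inequalities, one obtains $\mathbb{E}[|P_{k\Delta_l}^N(j,j_2)|^{p}]^{1/p}\leq \mathsf{C}\max_{j_1}\mathbb{E}[|\xi_{k\Delta_l}^{i}(j_1)|^{2p}]^{1/(2p)}$ for any $p\in(0,\infty)$. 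Then Lemma \ref{lem:xi_lq} gives $\mathbb{E}[|\xi_{k\Delta_l}^{i}(j_1)|^{2p}]^{1/(2p)}\leq\mathsf{C}$ (with a constant depending on $k,l,p$ but not $N$), which closes the chain of estimates. Collecting the pieces — pulling out $\Delta_l^{1/2}$, bounding matrix entries of the square root by the operator norm, Minkowski/Cauchy--Schwarz on the quadratic expansion, and finally Lemma \ref{lem:xi_lq} — yields $\mathbb{E}[|\alpha_{k\Delta_l}^N(j_1,j_2)|^q]^{1/q}\leq\mathsf{C}\Delta_l^{1/2}$ with $\mathsf{C}$ independent of $N$.

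I expect the only mildly delicate point — hardly an obstacle — to be the handling of the matrix square root: justifying that control of the entries (equivalently the Frobenius norm) of $M=R_1+P_{k\Delta_l}^NSP_{k\Delta_l}^N$ transfers to control of the entries of $M^{1/2}$, via $|M^{1/2}(j_1,j_2)|\leq\|M^{1/2}\|_{\mathrm{op}}=\|M\|_{\mathrm{op}}^{1/2}\leq\|M\|_{F}^{1/2}$, and keeping the integrability bookkeeping straight (the $1/2$-power halves exponents, compensated by the fact that all the moment bounds hold at every order). Everything else is a routine reprise of estimates already carried out for Lemma \ref{lem:xi_lq}.
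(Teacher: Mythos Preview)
Your proposal is correct and takes a genuinely different route from the paper. Both arguments start the same way, bounding $\mathbb{E}[|(\alpha_{k\Delta_l}^N)^2(j_1,j_2)|^q]^{1/q}\leq\mathsf{C}\Delta_l$ via the expansion of $R_1+P^N S P^N$ and Lemma~\ref{lem:xi_lq}. The difference is in how the passage from $(\alpha^N)^2$ to $\alpha^N$ is made. The paper writes the eigen-decomposition of $\Delta_l^{-1}(\alpha^N)^2$, observes that its eigenvalues are bounded \emph{below} by $\lambda_{\min}(R_1)>0$, and uses $\lambda^{1/2}\leq\lambda/\sqrt{\lambda_{\min}(R_1)}$ to get a linear bound $\Delta_l^{-1/2}\alpha^N(j,j)\leq\mathsf{C}\,\Delta_l^{-1}(\alpha^N)^2(j,j)$ on the diagonal; off-diagonal entries are then extracted from the system $(\alpha^N)^2(j_1,j_2)=\sum_j\alpha^N(j_1,j)\alpha^N(j,j_2)$. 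Your route via $|M^{1/2}(j_1,j_2)|\leq\|M^{1/2}\|_{\mathrm{op}}=\|M\|_{\mathrm{op}}^{1/2}\leq\|M\|_F^{1/2}$ is more elementary: it handles all entries at once, does not need the strict positive-definiteness of $R_1$, and avoids the somewhat informal ``solve the quadratic system'' step for off-diagonals. The paper's approach buys a marginally sharper pointwise inequality on the diagonal (linear rather than square-root in the entries of $M$), but that extra sharpness is never exploited downstream.
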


\begin{proof}
By the representation of $\alpha_{k\Delta_l}^N$, {we have
$$
\mathbb{E}[|(\alpha_{k\Delta_l}^N)^2(j_1,j_2)|^q]^{1/q} = \Delta_l\mathbb{E}[|(R_1 + P_{k\Delta_l}^NSP_{k\Delta_l}^N)(j_1,j_2)|^q]^{1/q}.
$$
}
So using Lemma \ref{lem:xi_lq}, {it follows}, that 
for any $(q,k,l)\in(0,\infty)\times\mathbb{N}_0^2$ such that for any $(l,j_1,j_2,N)\in\mathbb{N}_0\times\{1,\dots,d_x\}^2\times\{2,3,\dots\}$:
\begin{equation}\label{eq:al_lq_1}
\mathbb{E}[|(\alpha_{k\Delta_l}^N)^2(j_1,j_2)|^q]^{1/q} \leq \mathsf{C}\Delta_l.
\end{equation}
The eigen-decomposition of $\Delta_l^{-1}(\alpha_{k\Delta_l}^N)^2$, is written as 
$$X((\alpha_{k\Delta_l}^N)^2)\Lambda((\alpha_{k\Delta_l}^N)^2)X((\alpha_{k\Delta_l}^N)^2)^{\top}=\Delta_l^{-1}(\alpha_{k\Delta_l}^N)^2,$$ 
and thus 
$$
X((\alpha_{k\Delta_l}^N)^2)\Lambda((\alpha_{k\Delta_l}^N)^2)^{1/2}X((\alpha_{k\Delta_l}^N)^2)^{\top}=\Delta_l^{-1/2}(\alpha_{k\Delta_l}^N).
$$
Now as $\Delta_l^{-1}(\alpha_{k\Delta_l}^N)^2$ is the sum of a positive definite symmetric matrix, and a positive semi-definite symmetric matrix, we know that minimum eigenvalue
of $\Delta_l^{-1}(\alpha_{k\Delta_l}^N)^2$ is lower-bounded by a deterministic and positive constant, (the minimum eigenvalue of $R_1$), i.e.  
\begin{equation}\label{eq:al_lq_2}
\Lambda((\alpha_{k\Delta_l}^N)^2)(1,1) \geq \mathsf{\bar{C}}.
\end{equation}
Thus, for the diagonal elements of $\Delta_l^{-1/2}\alpha_{k\Delta_l}^N$, $j_1\in\{1,\dots,d_x\}$:
\begin{align*}
\Delta_l^{-1/2}\alpha_{k\Delta_l}^N(j_1,j_1) & =  \sum_{j=1}^{d_x} \Lambda((\alpha_{k\Delta_l}^N)^2)(j,j)^{1/2}X((\alpha_{k\Delta_l}^N)^2)(j_1,j)^2\\
& \leq  \mathsf{C}\sum_{j=1}^{d_x} \Lambda((\alpha_{k\Delta_l}^N)^2)(j,j)X((\alpha_{k\Delta_l}^N)^2)(j_1,j)^2,
\end{align*}
where $\mathsf{C}=1/\sqrt{\mathsf{\bar{C}}}$ and $\mathsf{\bar{C}}$ is as \eqref{eq:al_lq_2}. Hence it follows that by \eqref{eq:al_lq_1}
\begin{equation}\label{eq:al_lq_3}
\mathbb{E}[|\alpha_{k\Delta_l}^N(j_1,j_1)|^q]^{1/q} \leq \mathsf{C}\Delta_l^{1/2}.
\end{equation}
For the off-diagonal elements of $\alpha_{k\Delta_l}^N$ one can simply use \eqref{eq:al_lq_1} and \eqref{eq:al_lq_3} combined with the equations for $(j_1,j_2)\in\{1,\dots,d_x\}^2$
$$
(\alpha_{k\Delta_l}^N)^2(j_1,j_2) = \sum_{j=1}^{d_x}\alpha_{k\Delta_l}^N(j_1,j)\alpha_{k\Delta_l}^N(j,j_2).
$$
As we know all the moments of $(\alpha_{k\Delta_l}^N)^2(j_1,j_2)$ and $\alpha_{k\Delta_l}^N(j_1,j_2)$ are bounded and we have ${d_x(d_x+1)/2}$ equations with ${d_x(d_x-1)/2}$
quantities to bound, the proof is then completed by {standard} algebraic and probabilistic calculations.
\end{proof}

\begin{lem}\label{lem:p_first}
{
For any $(q,k,l)\in(0,\infty)\times\mathbb{N}_0$ there exists a $\mathsf{C}<+\infty$ such that for any $(j_1,j_2,N)\in\{1,\dots,d_x\}^2\times\{2,3,\dots\}$:
$$
\mathbb{E}\Big[\Big|\Big(\alpha_{k\Delta_l}^N
\Big(\tfrac{1}{N-1}\sum_{i=1}^N(\omega_k^i-\bar{\omega}_k)(\omega_k^i-\bar{\omega}_k)^{\top} - I \Big)\alpha_{k\Delta_l}^N\Big)(j_1,j_2)\Big|^q\Big]^{1/q} \leq \frac{\mathsf{C}\Delta_l}{\sqrt{N}}.
$$}
\end{lem}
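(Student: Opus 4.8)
The plan is to expand the matrix product elementwise, isolate the mean-zero fluctuation of the sample-covariance of the i.i.d.\ Gaussians $\omega_k^i$, and use the moment bounds on $\alpha_{k\Delta_l}^N$ established in Lemma~\ref{lem:al_lq}. First I would write, for fixed $(j_1,j_2)$,
$$
\Big(\alpha_{k\Delta_l}^N\Big(\tfrac{1}{N-1}\sum_{i=1}^N(\omega_k^i-\bar\omega_k)(\omega_k^i-\bar\omega_k)^\top - I\Big)\alpha_{k\Delta_l}^N\Big)(j_1,j_2) = \sum_{a=1}^{d_x}\sum_{b=1}^{d_x}\alpha_{k\Delta_l}^N(j_1,a)\,\Delta_{ab}^N\,\alpha_{k\Delta_l}^N(b,j_2),
$$
where $\Delta_{ab}^N := \tfrac{1}{N-1}\sum_{i=1}^N(\omega_k^i-\bar\omega_k)(a)(\omega_k^i-\bar\omega_k)(b) - \mathbb{I}_{\{a\}}(b)$ is the centred empirical-covariance error of i.i.d.\ standard Gaussians. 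Since $d_x$ is fixed, it suffices by Minkowski to bound each of the $d_x^2$ summands, i.e.\ to bound $\mathbb{E}[|\alpha_{k\Delta_l}^N(j_1,a)\,\Delta_{ab}^N\,\alpha_{k\Delta_l}^N(b,j_2)|^q]^{1/q}$.

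Next I would apply a generalized Hölder inequality with exponents $(3q,3q,3q)$ to split this into $\mathbb{E}[|\alpha_{k\Delta_l}^N(j_1,a)|^{3q}]^{1/(3q)}\,\mathbb{E}[|\Delta_{ab}^N|^{3q}]^{1/(3q)}\,\mathbb{E}[|\alpha_{k\Delta_l}^N(b,j_2)|^{3q}]^{1/(3q)}$. The two $\alpha$-factors are each $\le \mathsf{C}\Delta_l^{1/2}$ by Lemma~\ref{lem:al_lq} (applied at exponent $3q$), contributing the factor $\Delta_l$. For the middle factor, note that $\Delta_{ab}^N$ depends only on the $\omega_k^i$, which are independent of everything else and in particular of $\alpha_{k\Delta_l}^N$ (which is $\mathscr{F}_{k\Delta_l}$- and particle-history-measurable) — so the Hölder split is legitimate and $\mathbb{E}[|\Delta_{ab}^N|^{3q}]^{1/(3q)}$ is a pure Gaussian computation. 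Here one uses the standard fact that the recentred sample second moment of $N$ i.i.d.\ mean-zero unit-variance Gaussians has $\mathbb{L}_p$-fluctuation of order $N^{-1/2}$: writing $\Delta_{ab}^N = \big(\tfrac{1}{N-1}\sum_i \omega_k^i(a)\omega_k^i(b) - \mathbb{I}_{\{a\}}(b)\big) - \tfrac{N}{N-1}\bar\omega_k(a)\bar\omega_k(b)$, the first bracket is a sum of $N$ i.i.d.\ centred terms scaled by $1/(N-1)$, to which the Marcinkiewicz–Zygmund (or Rosenthal) inequality gives an $\mathbb{L}_{3q}$-bound of $\mathsf{C}/\sqrt{N}$; the correction term $\bar\omega_k(a)\bar\omega_k(b)$ has $\mathbb{L}_{3q}$-norm of order $1/N$ by Cauchy–Schwarz since each $\bar\omega_k(\cdot)$ is $\mathcal{N}(0,1/N)$. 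Hence $\mathbb{E}[|\Delta_{ab}^N|^{3q}]^{1/(3q)}\le \mathsf{C}/\sqrt{N}$, and multiplying the three factors yields the claimed bound $\mathsf{C}\Delta_l/\sqrt{N}$ per summand, and then by the triangle inequality over the $d_x^2$ terms.

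I expect the only genuinely delicate point to be bookkeeping the conditional-independence structure so that the Hölder split is valid: one must be careful that $\alpha_{k\Delta_l}^N$ is measurable with respect to the $\sigma$-field generated by the particles and observations up to time $k\Delta_l$, whereas the $\omega_k^i$ are the fresh innovations at step $k$, hence independent of that $\sigma$-field; this is exactly the structure used in the proof of Lemma~\ref{lem:xi_lq} for the term $T_3$. Everything else — the Minkowski reduction to a single summand, the Gaussian moment estimates, the Marcinkiewicz–Zygmund bound on the empirical variance — is routine. (One small subtlety: $q\in(0,1)$ is handled by Jensen as elsewhere in the paper, so it suffices to treat $q\ge 1$, where the exponent $3q\ge 1$ needed for Lemma~\ref{lem:al_lq} is legitimate.)
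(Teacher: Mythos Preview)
Your proposal is correct and follows essentially the same route as the paper: expand the $(j_1,j_2)$ entry as a double sum over the inner indices, use Minkowski to reduce to a single summand, separate the $\alpha_{k\Delta_l}^N$ factors from the centred Gaussian sample-covariance, apply Lemma~\ref{lem:al_lq} to extract $\Delta_l^{1/2}$ from each $\alpha$ factor, and bound the Gaussian fluctuation by $\mathsf{C}/\sqrt{N}$ via Marcinkiewicz--Zygmund. The only cosmetic differences are that the paper splits the inner sum into diagonal ($T_1$) and off-diagonal ($T_2$) cases before bounding each identically, and that the paper exploits the independence of $\omega_k^i$ from $\alpha_{k\Delta_l}^N$ to \emph{factor} the expectation exactly (then uses Cauchy--Schwarz on the remaining product $\alpha\cdot\alpha$), whereas you use a three-way H\"older split with exponents $(3q,3q,3q)$; both achieve the same bound.
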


\begin{proof}
We have
$$
\Big(\alpha_{k\Delta_l}^N(\tfrac{1}{N-1}\sum_{i=1}^N(\omega_k^i-\bar{\omega}_k)(\omega_k^i-\bar{\omega}_k)^{\top} - I \Big)\alpha_{k\Delta_l}^N\Big)(j_1,j_2)
= 
$$
\begin{align*}
\sum_{j_4=1}^{d_x}\sum_{j_3=1}^{d_x}\alpha_{k\Delta_l}^N(j_1,j_3)\alpha_{k\Delta_l}^N(j_4,j_2)\tfrac{1}{N-1}\sum_{i=1}^N(\omega_k^i-\bar{\omega}_k)(j_3)(\omega_k^i -\bar{\omega}_k)(j_4) 
 - \sum_{j_3=1}^{d_x}\alpha_{k\Delta_l}^N(j_1,j_3)\alpha_{k\Delta_l}^N(j_3,j_2) = T_1 + T_2,
\end{align*}
where
\begin{align*}
T_1 & :=  \sum_{j_3=1}^{d_x}\alpha_{k\Delta_l}^N(j_1,j_3)\alpha_{k\Delta_l}^N(j_3,j_2)\Big(\tfrac{1}{N-1}\sum_{i=1}^N(\omega_k^i-\bar{\omega}_k)(j_3)^2-1\Big), \\
T_2 & :=   
\sum_{j_4=1}^{d_x}\sum_{j_3=1}^{d_x}\mathbb{I}_{D^c}(j_3,j_4)
\alpha_{k\Delta_l}^N(j_1,j_3)\alpha_{k\Delta_l}^N(j_4,j_2)\tfrac{1}{N-1}\sum_{i=1}^N(\omega_k^i-\bar{\omega}_k)(j_3)(\omega_k^i-\bar{\omega}_k)(j_4),
\end{align*}
and $D=\{(j_3,j_4)\in\{1,\dots,d_x\}^2:j_3=j_4\}$. So by Minkowski, we can bound the $\mathbb{L}_q-$moments of $T_1$ and $T_2$ independently and sum the bounds.

For $T_1$ by Minkowksi and the independence of $\omega_k^i$
$$
\mathbb{E}[|T_1|^q]^{1/q} \leq  \sum_{j_3=1}^{d_x}\Big(\mathbb{E}[|\alpha_{k\Delta_l}^N(j_1,j_3)\alpha_{k\Delta_l}^N(j_3,j_2)|^q]\mathbb{E}\Big[\Big|
\tfrac{1}{N-1}\sum_{i=1}^N(\omega_k^i-\bar{\omega}_k)(j_3)^2-1\Big|^q\Big]\Big)^{1/q},
$$
by using {standard} calculations along with the Marcinkiewicz-Zygmund inequality and the moments of Gaussian random variables
$$
\mathbb{E}[|T_1|^q]^{1/q} \leq  \frac{\mathsf{C}}{\sqrt{N}}\sum_{j_3=1}^{d_x}\mathbb{E}[|\alpha_{k\Delta_l}^N(j_1,j_3)\alpha_{k\Delta_l}^N(j_3,j_2)|^q]^{1/q}.
$$
Application of Cauchy-Schwarz and Lemma \ref{lem:al_lq} yields
\begin{equation}\label{eq:quad_lq_1}
\mathbb{E}[|T_1|^q]^{1/q} \leq  \frac{\mathsf{C}\Delta_l}{\sqrt{N}}.
\end{equation}

For $T_2$ by Minkowksi and the independence of $\omega_k^i$
\begin{align*}
\mathbb{E}[|T_2|^q]^{1/q} &\leq  \sum_{j_4=1}^{d_x}\sum_{j_3=1}^{d_x}\mathbb{I}_{D^c}(j_3,j_4)\Big(\mathbb{E}[|\alpha_{k\Delta_l}^N(j_1,j_3)\alpha_{k\Delta_l}^N(j_4,j_2)|^q]
 \mathbb{E}\Big[\Big| \tfrac{1}{N-1}\sum_{i=1}^N(\omega_k^i-\bar{\omega}_k)(j_3)(\omega_k^i-\bar{\omega}_k)(j_4)\Big|^q\Big]\Big)^{1/q}.
\end{align*}
One can then use a similar argument to $T_1$ to deduce that
\begin{equation}\label{eq:quad_lq_2}
\mathbb{E}[|T_2|^q]^{1/q} \leq  \frac{\mathsf{C}\Delta_l}{\sqrt{N}}.
\end{equation}
Summing \eqref{eq:quad_lq_1} and \eqref{eq:quad_lq_2} conclude the proof.

\end{proof}


\begin{lem}\label{lem:p_sec}
{For any $(q,k,l)\in(0,\infty)\times\mathbb{N}_0^2$ there exists a $\mathsf{C}<+\infty$ such that for any $(j_1,j_2,N)\in\{1,\dots,d_x\}^2\times\{2,3,\dots\}$:
$$
\mathbb{E}\Big[\Big|\Big(\alpha_{k\Delta_l}^N
\Big(\tfrac{1}{N-1}\sum_{i=1}^N(\omega_k^i-\bar{\omega}_k)(\xi_{k\Delta_l}^i-m_{k\Delta_l}^N)^{\top}\Big)(B_{k\Delta_l}^N)^{\top}\Big)(j_1,j_2)\Big|^q\Big]^{1/q} \leq \frac{\mathsf{C}\Delta_l^{1/2}}{\sqrt{N}}.
$$}
\end{lem}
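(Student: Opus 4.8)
The plan is to follow the proof of Lemma \ref{lem:p_first}, with the extra structure of the cross-term handled by conditioning. Write $\mathscr{G}_{k\Delta_l}$ for the $\sigma$-field generated by the particle system and the observations up to time $k\Delta_l$; then $\alpha_{k\Delta_l}^N$, $B_{k\Delta_l}^N$, $m_{k\Delta_l}^N$, $P_{k\Delta_l}^N$ and $(\xi_{k\Delta_l}^i)_{i=1}^N$ are $\mathscr{G}_{k\Delta_l}$-measurable, while $(\omega_k^i)_{i=1}^N$ are i.i.d.\ $\mathcal{N}_{d_x}(0,I)$ and independent of $\mathscr{G}_{k\Delta_l}$. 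Expanding the $(j_1,j_2)$-entry of the product gives
$$
\sum_{j_3=1}^{d_x}\sum_{j_4=1}^{d_x}\alpha_{k\Delta_l}^N(j_1,j_3)\,B_{k\Delta_l}^N(j_2,j_4)\,M_k(j_3,j_4),\qquad M_k(j_3,j_4):=\tfrac{1}{N-1}\sum_{i=1}^N(\omega_k^i-\bar\omega_k)(j_3)(\xi_{k\Delta_l}^i-m_{k\Delta_l}^N)(j_4),
$$
and since $\sum_{i=1}^N(\xi_{k\Delta_l}^i-m_{k\Delta_l}^N)=0$ one may replace $(\omega_k^i-\bar\omega_k)$ by $\omega_k^i$ in $M_k$. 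By Minkowski it then suffices to bound the $\mathbb{L}_q$-norm of a single summand.

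The key step is the following conditional estimate. Given $\mathscr{G}_{k\Delta_l}$, the variables $\omega_k^i(j_3)(\xi_{k\Delta_l}^i-m_{k\Delta_l}^N)(j_4)$, $i=1,\dots,N$, are independent, centred, with conditional second moment $\big((\xi_{k\Delta_l}^i-m_{k\Delta_l}^N)(j_4)\big)^2$, so the Marcinkiewicz--Zygmund inequality, Minkowski and the Gaussian moments of $\omega_k^i(j_3)$ give
$$
\mathbb{E}\big[|M_k(j_3,j_4)|^q\mid\mathscr{G}_{k\Delta_l}\big]^{1/q}\leq\frac{\mathsf{C}}{N-1}\Big(\sum_{i=1}^N\big((\xi_{k\Delta_l}^i-m_{k\Delta_l}^N)(j_4)\big)^2\Big)^{1/2}=\frac{\mathsf{C}}{\sqrt{N-1}}\,P_{k\Delta_l}^N(j_4,j_4)^{1/2},
$$
using $\sum_{i=1}^N\big((\xi_{k\Delta_l}^i-m_{k\Delta_l}^N)(j_4)\big)^2=(N-1)P_{k\Delta_l}^N(j_4,j_4)$. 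Since $\alpha_{k\Delta_l}^N(j_1,j_3)$ and $B_{k\Delta_l}^N(j_2,j_4)$ are $\mathscr{G}_{k\Delta_l}$-measurable, the tower property bounds the $\mathbb{L}_q$-norm of the summand by $\mathsf{C}N^{-1/2}\,\mathbb{E}\big[|\alpha_{k\Delta_l}^N(j_1,j_3)|^q|B_{k\Delta_l}^N(j_2,j_4)|^q P_{k\Delta_l}^N(j_4,j_4)^{q/2}\big]^{1/q}$.

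It remains to show this last expectation is $\mathcal{O}(\Delta_l^{q/2})$. Apply Hölder's inequality to split it into a product of three factors with exponents $3q,3q,3q/2$: Lemma \ref{lem:al_lq} gives $\mathbb{E}[|\alpha_{k\Delta_l}^N(j_1,j_3)|^{3q}]^{1/(3q)}\leq\mathsf{C}\Delta_l^{1/2}$, while $B_{k\Delta_l}^N(j_2,j_4)=\mathbb{I}_{\{j_2\}}(j_4)+A(j_2,j_4)\Delta_l-\Delta_l(P_{k\Delta_l}^NS)(j_2,j_4)$ and $P_{k\Delta_l}^N(j_4,j_4)$ have all moments bounded uniformly in $N$ (by the estimates on $\mathbb{E}[|P_{k\Delta_l}^N(j,j')|^{p}]$ obtained inside the proof of Lemma \ref{lem:xi_lq}, together with $\Delta_l\leq1$). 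Hence each summand is $\leq\mathsf{C}\Delta_l^{1/2}/\sqrt{N}$, and summing over the finitely many $(j_3,j_4)\in\{1,\dots,d_x\}^2$ finishes the proof. The only genuinely new ingredient compared with Lemma \ref{lem:p_first} is the conditioning that produces the factor $P_{k\Delta_l}^N(j_4,j_4)^{1/2}$; everything else is bookkeeping with moment bounds already established, and the one subtlety requiring care is to centre the $\omega$'s away first via $\sum_i(\xi_{k\Delta_l}^i-m_{k\Delta_l}^N)=0$ so that conditional independence of the $\omega_k^i$ can be used cleanly.
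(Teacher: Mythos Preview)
Your proof is correct and follows essentially the same strategy as the paper: expand the $(j_1,j_2)$-entry into a double sum, apply Minkowski, condition on the $\sigma$-field generated by the particle system up to time $k\Delta_l$, use the conditional Marcinkiewicz--Zygmund inequality on the $\omega_k^i$ sum, and finish with the moment bounds from Lemmata \ref{lem:xi_lq} and \ref{lem:al_lq}. Your observation that $\sum_i(\xi_{k\Delta_l}^i-m_{k\Delta_l}^N)=0$ lets one drop $\bar\omega_k$ outright is a mild streamlining of the paper's decomposition (which keeps and bounds that term separately), and your packaging of the conditional bound as $\mathsf{C}(N-1)^{-1/2}P_{k\Delta_l}^N(j_4,j_4)^{1/2}$ is slightly cleaner than the paper's $N^{-q/2-1}\sum_i|(\xi_{k\Delta_l}^i-m_{k\Delta_l}^N)(j_4)|^q$, but the two routes are otherwise the same.
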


\begin{proof}
We first remark that via Lemma \ref{lem:xi_lq} it easily follows that 
for any $(q,k,l)\in(0,\infty)\times\mathbb{N}_0^2$ there exists a $\mathsf{C}\in[1,\infty)$ such that for any $(j_1,j_2,N)\in\{1,\dots,d_x\}^2\times\{2,3,\dots\}$:
\begin{equation}\label{eq:quad1_lq_1}
\mathbb{E}[|B_{k\Delta_l}^N(j_1,j_2)|^q]^{1/q} \leq \mathsf{C}.
\end{equation}

Now, we note that 
$$
\alpha_{k\Delta_l}^N
\Big(\tfrac{1}{N-1}\sum_{i=1}^N(\omega_k^i-\bar{\omega}_k)(\xi_{k\Delta_l}^i-m_{k\Delta_l}^N)^{\top}\Big)(B_{k\Delta_l}^N)^{\top}(j_1,j_2)
= 
$$
$$
\sum_{j_4=1}^{d_x}\sum_{j_3=1}^{d_x}\alpha_{k\Delta_l}^N(j_1,j_3)B_{k\Delta_l}^N(j_2,j_4)\tfrac{1}{N-1}\sum_{i=1}^N(\omega_k^i-\bar{\omega}_k)(j_3)(\xi_{k\Delta_l}^i-m_{k\Delta_l}^N)(j_4). 
$$
Thus we have by Minkowski
$$
\mathbb{E}\Big[\Big|\Big(\alpha_{k\Delta_l}^N
\Big(\tfrac{1}{N-1}\sum_{i=1}^N(\omega_k^i-\bar{\omega}_k)(\xi_{k\Delta_l}^i-m_{k\Delta_l}^N)^{\top}\Big)(B_{k\Delta_l}^N)^{\top}\Big)(j_1,j_2)\Big|^q\Big]^{1/q} \leq
$$
$$
\sum_{j_4=1}^{d_x}\sum_{j_3=1}^{d_x}\mathbb{E}\Big[\Big|
\alpha_{k\Delta_l}^N(j_1,j_3)B_{k\Delta_l}^N(j_2,j_4)\tfrac{1}{N-1}\sum_{i=1}^N(\omega_k^i-\bar{\omega}_k)(j_3)(\xi_{k\Delta_l}^i-m_{k\Delta_l}^N)(j_4)\Big|^q\Big]^{1/q}.
$$
Now if $\mathscr{F}^N_{k\Delta_l}$ is the $\sigma-$field generated by the particle system and observations {up-to} time $k\Delta_l$
$$
\mathbb{E}\Big[\Big|\tfrac{1}{N-1}\sum_{i=1}^N(\omega_k^i-\bar{\omega}_k)(j_3)(\xi_{k\Delta_l}^i-m_{k\Delta_l}^N)(j_4)\Big|^q\Big|\mathscr{F}^N_{k\Delta_l}\Big]
=
$$
$$
\mathbb{E}\Big[\Big|\tfrac{1}{N}\sum_{i=1}^N\omega_k^i(j_3)\frac{N(\xi_{k\Delta_l}^i-m_{k\Delta_l}^N)(j_4)}{N-1}
- \Big(\tfrac{1}{N}\sum_{i=1}^N\frac{N(\xi_{k\Delta_l}^i-m_{k\Delta_l}^N)(j_4)}{N-1}\Big)\tfrac{1}{N}\sum_{i=1}^N\omega_k^i(j_3)
\Big|^q\Big|\mathscr{F}^N_{k\Delta_l}\Big] \leq
$$
\begin{align*}
\mathsf{C}\Big(\mathbb{E}\Big[\Big|\tfrac{1}{N}\sum_{i=1}^N\omega_k^i(j_3)\frac{N(\xi_{k\Delta_l}^i-m_{k\Delta_l}^N)(j_4)}{N-1}\Big|^q\Big|\mathscr{F}^N_{k\Delta_l}\Big]
+ 
\mathbb{E}\Big[\Big|\Big(\tfrac{1}{N}\sum_{i=1}^N\frac{N(\xi_{k\Delta_l}^i-m_{k\Delta_l}^N)(j_4)}{N-1}\Big)\tfrac{1}{N}\sum_{i=1}^N\omega_k^i(j_3)
\Big|^q\Big|\mathscr{F}^N_{k\Delta_l}\Big]
\Big).
\end{align*}
Then applying the conditional Marcinkiewicz-Zygmund inequality, almost surely
$$
\mathbb{E}\Big[\Big|\tfrac{1}{N-1}\sum_{i=1}^N(\omega_k^i-\bar{\omega}_k)(j_3)(\xi_{k\Delta_l}^i-m_{k\Delta_l}^N)(j_4)\Big|^q\Big|\mathscr{F}^N_{k\Delta_l}\Big]\leq
$$
$$
\frac{\mathsf{C}}{N^{q/2+1}}\sum_{i=1}^N |(\xi_{k\Delta_l}^i-m_{k\Delta_l}^N)(j_4)|^q + \frac{\mathsf{C}}{N^{q/2}}\Big|\Big(\tfrac{1}{N}\sum_{i=1}^N\frac{N(\xi_{k\Delta_l}^i-m_{k\Delta_l}^N)(j_4)}{N-1}\Big)\Big|^q,
$$
where $\mathsf{C}$ is a deterministic constant, so that we have shown that 
$$
\mathbb{E}\Big[\Big|\Big(\alpha_{k\Delta_l}^N
\Big(\tfrac{1}{N-1}\sum_{i=1}^N(\omega_k^i-\bar{\omega}_k)(\xi_{k\Delta_l}^i-m_{k\Delta_l}^N)^{\top}\Big)(B_{k\Delta_l}^N)^{\top}\Big)(j_1,j_2)\Big|^q\Big]^{1/q} \leq
$$
\begin{align*}
\sum_{j_4=1}^{d_x}\sum_{j_3=1}^{d_x}\Big(\mathbb{E}\Big[\Big||\alpha_{k\Delta_l}^N(j_1,j_3)B_{k\Delta_l}^N(j_2,j_4)|^q\Big\{
\frac{\mathsf{C}}{N^{q/2+1}}\sum_{i=1}^N |(\xi_{k\Delta_l}^i-m_{k\Delta_l}^N)(j_4)|^q  \\+  \frac{\mathsf{C}}{N^{q/2}}\Big|\Big(\tfrac{1}{N}\sum_{i=1}^N\frac{N(\xi_{k\Delta_l}^i-m_{k\Delta_l}^N)(j_4)}{N-1}\Big)\Big|^q\Big\}\Big]\Big)^{1/q}.
\end{align*}
The result can now be concluded by standard calculations combined with Lemmata \ref{lem:xi_lq}-\ref{lem:al_lq} and \eqref{eq:quad1_lq_1}.

\end{proof}
%

\begin{lem}\label{lem:p_lq}
{For any $(q,k,l)\in(0,\infty)\times\mathbb{N}_0^2$ there exists a $\mathsf{C}<+\infty$ such that for any $(j_1,j_2,N)\in\{1,\dots,d_x\}^2\times\{2,3,\dots\}$:
$$
\mathbb{E}\Big[\Big|P_{k\Delta_l}^N(j_1,j_2)-P_{k\Delta_l}(j_1,j_2)\Big|^q\Big]^{1/q} \leq \frac{\mathsf{C}}{\sqrt{N}}.
$$}
\end{lem}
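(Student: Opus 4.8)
The plan is to compare the stochastic covariance recursion \eqref{eq:enkf_cov_rec} of Proposition \ref{prop:denkf_rec} with the deterministic recursion \eqref{eq:iid_cov_rec}. Writing $E_{k\Delta_l} := P_{k\Delta_l}^N - P_{k\Delta_l}$ and recalling that $\textrm{SRicc}(P) = (A-PS)P(A^{\top}-SP)$, subtraction of the two recursions produces the exact one-step identity
$$
E_{(k+1)\Delta_l} - E_{k\Delta_l} = \big(\textrm{Ricc}(P_{k\Delta_l}^N)-\textrm{Ricc}(P_{k\Delta_l})\big)\Delta_l + \big(\textrm{SRicc}(P_{k\Delta_l}^N)-\textrm{SRicc}(P_{k\Delta_l})\big)\Delta_l^2 + M_k,
$$
where $M_k$ is the sum of the two fluctuation matrices in \eqref{eq:enkf_cov_rec}, i.e.\ $\alpha_{k\Delta_l}^N(\tfrac{1}{N-1}\sum_{i=1}^N(\omega_k^i-\bar\omega_k)(\omega_k^i-\bar\omega_k)^{\top}-I)\alpha_{k\Delta_l}^N$ together with the symmetrised cross term. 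Telescoping from $k=0$, where $E_0 = P_0^N-\mathcal{P}_0$ is the sample-covariance error of $N$ i.i.d.\ Gaussians and hence satisfies $\mathbb{E}[|E_0(j_1,j_2)|^q]^{1/q}\le\mathsf{C}/\sqrt{N}$ for every $q$ by the Marcinkiewicz--Zygmund inequality, and applying Minkowski's inequality entrywise, one is left to control three contributions: the telescoped $\textrm{Ricc}$ differences, the telescoped $\textrm{SRicc}$ differences, and the partial sums $\sum_{j<k}M_j$.

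For the last of these, I would observe that $(M_j)_j$ is a sequence of martingale increments for the filtration $(\mathscr{F}^N_{j\Delta_l})_j$: indeed $\mathbb{E}[M_j\mid\mathscr{F}^N_{j\Delta_l}]=0$ since the sample covariance of i.i.d.\ standard Gaussians is an unbiased estimator of the identity and $\mathbb{E}[\sum_i(\omega_j^i-\bar\omega_j)(\xi_{j\Delta_l}^i-m_{j\Delta_l}^N)^{\top}\mid\mathscr{F}^N_{j\Delta_l}]=0$. Lemmata \ref{lem:p_first} and \ref{lem:p_sec} give $\mathbb{E}[|M_j(j_1,j_2)|^q]^{1/q}\le\mathsf{C}\Delta_l^{1/2}/\sqrt{N}$, so by Burkholder--Davis--Gundy followed by Minkowski (or, for fixed $k$, simply by the triangle inequality) one gets $\|\sum_{j<k}M_j(j_1,j_2)\|_q\le\mathsf{C}(k\Delta_l/N)^{1/2}\le\mathsf{C}/\sqrt{N}$. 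For the drift differences, the algebraic identity $\textrm{Ricc}(P^N)-\textrm{Ricc}(P)=AE+EA^{\top}-P^NSE-ESP$, and the analogous expansion of $\textrm{SRicc}(P^N)-\textrm{SRicc}(P)$ as a sum of terms each linear in $E$ and otherwise of degree at most two in $P^N,P$, combined with the moment bounds on the entries of $P_{k\Delta_l}^N$ that follow from Lemma \ref{lem:xi_lq} and the standing bound \eqref{eq:bound_deter_cov} on $P_{k\Delta_l}$, yield via H\"older's inequality $\mathbb{E}[|[\textrm{Ricc}(P_{j\Delta_l}^N)-\textrm{Ricc}(P_{j\Delta_l})](j_1,j_2)|^q]^{1/q}\le\mathsf{C}\max_{j_1',j_2'}\mathbb{E}[|E_{j\Delta_l}(j_1',j_2')|^{2q}]^{1/(2q)}$ and a similar bound for the $\textrm{SRicc}$ difference with $\mathbb{L}_{3q}$ in place of $\mathbb{L}_{2q}$.

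The main obstacle is exactly this loss of integrability: the quadratic and cubic dependence of $\textrm{Ricc}$ and $\textrm{SRicc}$ on $P$ forces the one-step estimate to control an $\mathbb{L}_{2q}$ (respectively $\mathbb{L}_{3q}$) norm of $E$, so a Gr\"onwall argument at a single fixed value of $q$ does not close. I would resolve this by running the induction on $k$ simultaneously over all $q>0$: writing $g_q(k):=\max_{j_1,j_2}\mathbb{E}[|E_{k\Delta_l}(j_1,j_2)|^q]^{1/q}$, the estimates above read $g_q(k)\le\mathsf{C}/\sqrt{N}+\mathsf{C}\Delta_l\sum_{j<k}g_{2q}(j)+\mathsf{C}\Delta_l^2\sum_{j<k}g_{3q}(j)$, and since $\mathsf{C}$ is permitted to depend on $(q,k,l)$ the inductive hypothesis ``$g_{q'}(j)\le\mathsf{C}(q',j,l)/\sqrt{N}$ for all $q'>0$ and all $j\le k$'' propagates to $j=k+1$, the base case $k=0$ having already been established. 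This proves the bound for $q\ge1$; the case $q\in(0,1)$ then follows from Jensen's inequality, as throughout the paper.
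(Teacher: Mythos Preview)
Your proof is correct and follows essentially the same route as the paper: subtract the deterministic recursion \eqref{eq:iid_cov_rec} from the stochastic one \eqref{eq:enkf_cov_rec}, control the fluctuation terms via Lemmata \ref{lem:p_first}--\ref{lem:p_sec}, bound the $\textrm{Ricc}$ and $\textrm{SRicc}$ differences through H\"older and Lemma \ref{lem:xi_lq} at the cost of raising the integrability exponent, and close by inducting on $k$ simultaneously over all $q$ since $\mathsf{C}$ may depend on $(q,k,l)$. The only cosmetic differences are that the paper writes a one-step recursion and iterates rather than telescoping, uses the plain triangle inequality (your BDG refinement is not needed here), and bounds the $\textrm{SRicc}$ difference in $\mathbb{L}_{2q}$ rather than $\mathbb{L}_{3q}$ by grouping two of the three factors.
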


\begin{proof}
The proof is by constructing a recursive in time bound and noting that the case $k=0$ follows by standard results for i.i.d.~sampling and Gaussian random variables. 

We start by noting that by \eqref{eq:iid_cov_rec} and \eqref{eq:enkf_cov_rec} of Proposition \ref{prop:denkf_rec} 
$$
P_{(k+1)\Delta_l}^N(j_1,j_2)-P_{(k+1)\Delta_l}(j_1,j_2) = \sum_{j=1}^6 T_j,
$$
where
\begin{align*}
T_1 & =  P_{k\Delta_l}^N(j_1,j_2)-P_{k\Delta_l}(j_1,j_2),\\
T_2 & =  \Big(\textrm{Ricc}(P_{k\Delta_l}^N)(j_1,j_2)-\textrm{Ricc}(P_{k\Delta_l})(j_1,j_2)\Big)\Delta_l,\\
T_3 & = \Big(\textrm{SRicc}(P_{k\Delta_l}^N)(j_1,j_2)-\textrm{SRicc}(P_{k\Delta_l})(j_1,j_2)\Big)\Delta_l^2,\\
T_4 & =  \Big(\alpha_{k\Delta_l}^N
\Big(\tfrac{1}{N-1}\sum_{i=1}^N(\omega_k^i-\bar{\omega}_k)(\omega_k^i-\bar{\omega}_k)^{\top} - I \Big)\alpha_{k\Delta_l}^N\Big)(j_1,j_2),\\
T_5 & =  \Big(\alpha_{k\Delta_l}^N
\Big(\tfrac{1}{N-1}\sum_{i=1}^N(\omega_k^i-\bar{\omega}_k)(\xi_{k\Delta_l}^i-m_{k\Delta_l}^N)^{\top}\Big)(B_{k\Delta_l}^N)^{\top}\Big)(j_1,j_2), \\
T_6 & =  \Big(\alpha_{k\Delta_l}^N
\Big(\tfrac{1}{N-1}\sum_{i=1}^N(\omega_k^i-\bar{\omega}_k)(\xi_{k\Delta_l}^i-m_{k\Delta_l}^N)^{\top}\Big)(B_{k\Delta_l}^N)^{\top}\Big)(j_2,j_1).
\end{align*}
To construct our recursion, via Minkowski, it is sufficient to obtain $\mathbb{L}_q-$bounds for each of the terms $T_1,\dots,T_6$. We note
\begin{equation}\label{eq:p_lq_1}
\mathbb{E}[|T_1|^q]^{1/q} = \mathbb{E}\Big[\Big|P_{k\Delta_l}^N(j_1,j_2)-P_{k\Delta_l}(j_1,j_2)\Big|^q\Big]^{1/q},
\end{equation}
and for $j\in\{4,5,6\}$ by Lemmata \ref{lem:p_first} and \ref{lem:p_sec}:
\begin{equation}\label{eq:p_lq_2}
{\mathbb{E}[|T_j|^q]^{1/q} \leq \frac{\mathsf{C}\Delta_l^{1/2}}{\sqrt{N}}.}
\end{equation}
As a result, we will just consider $T_2$ and $T_3$ only.

For $T_2$ we have that
\begin{align*}
T_2 & =   \sum_{j_3=1}^{d_x}\Big(A(j_1,j_3)[P_{k\Delta_l}^N(j_3,j_2)-P_{k\Delta_l}(j_3,j_2)]
+A(j_2,j_3)[P_{k\Delta_l}^N(j_1,j_3)-P_{k\Delta_l}(j_1,j_3)]\Big) + 
\\ &  \sum_{j_4=1}^{d_x}  \sum_{j_3=1}^{d_x}S(j_3,j_4)\Big(P_{k\Delta_l}^N(j_1,j_3)P_{k\Delta_l}^N(j_4,j_2)-P_{k\Delta_l}(j_1,j_3)P_{k\Delta_l}(j_4,j_2)\Big).
\end{align*}
{By applying} \eqref{eq:bound_deter_cov}, {standard} calculations yield that (note that $\mathbb{E}[P_{k\Delta_l}^N(j_1,j_2)|^q]\leq \mathsf{C}$ by Lemma \ref{lem:xi_lq})
\begin{equation}\label{eq:p_lq_3}
\mathbb{E}[|T_2|^q]^{1/q} \leq \mathsf{C} \max_{(j_1,j_2)\in\{1,\dots,d_x\}^2}\mathbb{E}\Big[\Big|P_{k\Delta_l}^N(j_1,j_2)-P_{k\Delta_l}(j_1,j_2)\Big|^{2q}\Big]^{1/(2q)}.
\end{equation}
One can use a similar approach to deduce that also
\begin{equation}\label{eq:p_lq_4}
\mathbb{E}[|T_3|^q]^{1/q} \leq \mathsf{C} \max_{(j_1,j_2)\in\{1,\dots,d_x\}^2}\mathbb{E}\Big[\Big|P_{k\Delta_l}^N(j_1,j_2)-P_{k\Delta_l}(j_1,j_2)\Big|^{2q}\Big]^{1/(2q)}.
\end{equation}
Therefore collecting together the bounds \eqref{eq:p_lq_1}-\eqref{eq:p_lq_4} we have that 
$$
\max_{(j_1,j_2)\in\{1,\dots,d_x\}^2}\mathbb{E}\Big[\Big|P_{(k+1)\Delta_l}^N(j_1,j_2)-P_{(k+1)\Delta_l}(j_1,j_2)\Big|^q\Big]^{1/q}  \leq 
$$
$$
\mathsf{C} \Big(\max_{(j_1,j_2)\in\{1,\dots,d_x\}^2}\mathbb{E}\Big[\Big|P_{k\Delta_l}^N(j_1,j_2)-P_{k\Delta_l}(j_1,j_2)\Big|^{2q}\Big]^{1/(2q)}
 + \frac{\Delta_l^{1/2}}{\sqrt{N}}\Big).
$$
Hence we have that
$$
\max_{(j_1,j_2)\in\{1,\dots,d_x\}^2}\mathbb{E}\Big[\Big|P_{(k+1)\Delta_l}^N(j_1,j_2)-P_{(k+1)\Delta_l}(j_1,j_2)\Big|^q\Big]^{1/q} \leq 
$$
$${
\mathsf{C}\Big( \max_{(j_1,j_2)\in\{1,\dots,d_x\}^2}\mathbb{E}\Big[\Big|P_{0}^N(j_1,j_2)-P_{0}(j_1,j_2)\Big|^{2q(k+1)}\Big]^{1/(2q(k+1))}
+ 
\frac{(k+1)\Delta_l^{1/2}}{\sqrt{N}}\Big),}
$$
from which the proof the proof is easily completed.

\end{proof}

\begin{lem}{
For any $(q,k,l)\in(0,\infty)\times\mathbb{N}_0^2$ there exists a $\mathsf{C}<+\infty$ such that for any $(j,N)\in\{1,\dots,d_x\}\times\{2,3,\dots\}$:
$$
\mathbb{E}\Big[\Big|m_{t+k_1\Delta_l}^N(j)-m_{t+k_1\Delta_l}(j)\Big|^q\Big]^{1/q} \leq \frac{\mathsf{C}}{\sqrt{N}}.
$$}
\end{lem}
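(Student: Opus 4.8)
The plan is to follow the template of the proof of Lemma~\ref{lem:p_lq}: subtract the i.i.d.\ mean recursion from the d-EnKBF mean recursion, obtain a decomposition of the error into four terms, bound each in $\mathbb{L}_q$, and close the resulting time recursion using a base case at $k=0$. From Proposition~\ref{prop:denkf_rec}(i) we have $m_{(k+1)\Delta_l}^N = \big(I+(A-P_{k\Delta_l}^NS)\Delta_l\big)m_{k\Delta_l}^N + U_{k\Delta_l}^N[Y_{(k+1)\Delta_l}-Y_{k\Delta_l}] + \alpha_{k\Delta_l}^N Z_k/\sqrt{N}$, while from \eqref{eq:iid_mean_rec}, using $U_{k\Delta_l}C=P_{k\Delta_l}S$, we have $m_{(k+1)\Delta_l} = \big(I+(A-P_{k\Delta_l}S)\Delta_l\big)m_{k\Delta_l} + U_{k\Delta_l}[Y_{(k+1)\Delta_l}-Y_{k\Delta_l}]$. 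Subtracting and using $U_{k\Delta_l}^N-U_{k\Delta_l}=(P_{k\Delta_l}^N-P_{k\Delta_l})C^{\top}R_2^{-1}$ gives $m_{(k+1)\Delta_l}^N(j)-m_{(k+1)\Delta_l}(j)=T_1+T_2+T_3+T_4$, with $T_1$ the transport term $\big(I+(A-P_{k\Delta_l}^NS)\Delta_l\big)(m_{k\Delta_l}^N-m_{k\Delta_l})$, $T_2=-\Delta_l\,(P_{k\Delta_l}^N-P_{k\Delta_l})S\,m_{k\Delta_l}$, $T_3=(P_{k\Delta_l}^N-P_{k\Delta_l})C^{\top}R_2^{-1}[Y_{(k+1)\Delta_l}-Y_{k\Delta_l}]$, and $T_4=\alpha_{k\Delta_l}^N Z_k/\sqrt{N}$, all read off in coordinate $j$.

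As a preliminary I would record that $\max_j\mathbb{E}[|m_{k\Delta_l}(j)|^q]^{1/q}\le\mathsf{C}$; this follows by an elementary induction on $k$ from \eqref{eq:iid_mean_rec}, \eqref{eq:bound_deter_cov} (so that the entries of $U_{k\Delta_l}$ are bounded) and Lemma~\ref{lem:y_inc}, with trivial base case $m_0=\mathcal{M}_0$. Then I would bound each $T_r$ in $\mathbb{L}_q$. For $T_1$, splitting the coefficient into its deterministic part $\mathbb{I}_{\{j\}}(j_1)+A(j,j_1)\Delta_l$ and its random part $-\Delta_l\sum_{j_2}P_{k\Delta_l}^N(j,j_2)S(j_2,j_1)$, Minkowski together with Cauchy--Schwarz and Lemma~\ref{lem:xi_lq} (bounded moments of $P_{k\Delta_l}^N$) yields $\mathbb{E}[|T_1|^q]^{1/q}\le(1+\mathsf{C}\Delta_l)\max_{j_1}\mathbb{E}[|(m_{k\Delta_l}^N-m_{k\Delta_l})(j_1)|^q]^{1/q}+\mathsf{C}\Delta_l\max_{j_1}\mathbb{E}[|(m_{k\Delta_l}^N-m_{k\Delta_l})(j_1)|^{2q}]^{1/(2q)}$. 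For $T_2$, Cauchy--Schwarz with the preliminary bound and Lemma~\ref{lem:p_lq} give $\mathbb{E}[|T_2|^q]^{1/q}\le\mathsf{C}\Delta_l/\sqrt{N}$; for $T_3$, H\"older with Lemma~\ref{lem:p_lq} and Lemma~\ref{lem:y_inc} give $\mathbb{E}[|T_3|^q]^{1/q}\le\mathsf{C}\Delta_l^{1/2}/\sqrt{N}$; for $T_4$, conditioning on $\mathscr{F}^N_{k\Delta_l}$ (the $\sigma$-field generated by the particles and observations up to time $k\Delta_l$), $[\alpha_{k\Delta_l}^N Z_k](j)$ is centred Gaussian with variance $((\alpha_{k\Delta_l}^N)^2)(j,j)$, so by Lemma~\ref{lem:al_lq} one gets $\mathbb{E}[|T_4|^q]^{1/q}\le\mathsf{C}\Delta_l^{1/2}/\sqrt{N}$.

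Summing via Minkowski produces a recursion of the schematic form $g_{k+1}(q)\le(1+\mathsf{C}\Delta_l)g_k(q)+\mathsf{C}g_k(2q)+\mathsf{C}/\sqrt{N}$, where $g_k(q):=\max_j\mathbb{E}[|(m_{k\Delta_l}^N-m_{k\Delta_l})(j)|^q]^{1/q}$. Iterating this, dominating $g_k(q)$ by $g_k(2q)$ by Jensen exactly as in Lemmata~\ref{lem:xi_lq} and~\ref{lem:p_lq}, reduces the bound to the base case $g_0(2^{k}q)$; and since $m_0^N-m_0=\frac{1}{N}\sum_{i=1}^N(\xi_0^i-\mathcal{M}_0)$ with $\xi_0^i\stackrel{\textrm{i.i.d.}}{\sim}\mathcal{N}_{d_x}(\mathcal{M}_0,\mathcal{P}_0)$, the Marcinkiewicz--Zygmund inequality gives $g_0(p)\le\mathsf{C}_p/\sqrt{N}$ for every $p$. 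This closes the induction and yields the claimed $\mathsf{C}/\sqrt{N}$ bound (with $\mathsf{C}$ depending on $q,k,l$ but not on $N$ or $j$).

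I do not expect a genuine obstacle here: the argument is essentially a verbatim adaptation of Lemmata~\ref{lem:xi_lq}--\ref{lem:p_lq}, and all the heavy lifting (moment bounds on $\xi^i$, $\alpha^N$, $Y$-increments, and $P^N-P$) is already available. The only points requiring a little care are the exponent-doubling bookkeeping in the time recursion (to keep the constant independent of $N$) and the small preliminary moment bound on the deterministic mean $m_{k\Delta_l}$ needed to handle $T_2$.
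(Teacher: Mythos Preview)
Your proposal is correct and takes exactly the approach the paper indicates: the paper's own proof is omitted for brevity, merely stating that one should mimic Lemma~\ref{lem:p_lq} using the mean recursions \eqref{eq:iid_mean_rec} and \eqref{eq:enkf_mean_rec} together with Lemmata~\ref{lem:p_lq} and~\ref{lem:y_inc}, which is precisely the decomposition and bounding scheme you carry out in detail.
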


\begin{proof}
The proof is more-or-less similar to that of Lemma \ref{lem:p_lq}. The difference is of course that one must consider
\eqref{eq:iid_mean_rec} and \eqref{eq:enkf_mean_rec} and that one must apply Lemmata \ref{lem:p_lq} and \ref{lem:y_inc}.
As the argument is fairly simple given this description, it is omitted for brevity.

\end{proof}


We now state our main result of interest, {which is a type of non-uniform} (in time) propagation of chaos result.

\begin{theorem}\label{theo:prop}
{
For any $(q,k,l)\in(0,\infty)\times\mathbb{N}_0^2$ there exists a $\mathsf{C}<+\infty$ such that for any $(j,N,i)\in\{1,\dots,d_x\}\times\{2,3,\dots\}\times\{1,\dots,N\}$:
$$
\mathbb{E}\Big[\Big|\xi_{k\Delta_l}^i(j)-\zeta_{k\Delta_l}^i(j)\Big|^q\Big]^{1/q} \leq \frac{\mathsf{C}}{\sqrt{N}}.
$$}
\end{theorem}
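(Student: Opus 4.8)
The plan is to derive a recursion in $k$ for the quantity $e_{k\Delta_l}^i(j) := \mathbb{E}[|\xi_{k\Delta_l}^i(j)-\zeta_{k\Delta_l}^i(j)|^q]^{1/q}$, and to close it using the covariance-difference bound from Lemma \ref{lem:p_lq} together with the moment bounds from Lemmata \ref{lem:y_inc}, \ref{lem:xi_lq}, \ref{lem:al_lq}. The base case $k=0$ is immediate since $\xi_0^{i,l}=\zeta_0^{i,l}$, so $e_0^i(j)=0$. For the inductive step I would subtract the Euler recursion \eqref{eq:enkf_ps} (or its compact form \eqref{eq:enkf_mod_rep}, i.e. $\xi^i_{(k+1)\Delta_l}=B^N_{k\Delta_l}\xi^i_{k\Delta_l}+U^N_{k\Delta_l}[Y_{(k+1)\Delta_l}-Y_{k\Delta_l}]+\alpha^N_{k\Delta_l}\omega^i_k$) from the i.i.d.\ recursion \eqref{eq:enkf_is}, which has the analogous form but with $B^N,U^N,\alpha^N$ replaced by their deterministic counterparts $B:=I+A\Delta_l-P_{k\Delta_l}S\Delta_l$, $U:=P_{k\Delta_l}C^\top R_2^{-1}$, $\alpha:=(R_1+P_{k\Delta_l}SP_{k\Delta_l})^{1/2}\Delta_l^{1/2}$, and crucially \emph{the same} Brownian increments $[\overline W^i_{(k+1)\Delta_l}-\overline W^i_{k\Delta_l}]$, $[\overline V^i_{(k+1)\Delta_l}-\overline V^i_{k\Delta_l}]$. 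The difference then splits as
\begin{align*}
\xi^i_{(k+1)\Delta_l}-\zeta^i_{(k+1)\Delta_l}
&= B^N_{k\Delta_l}(\xi^i_{k\Delta_l}-\zeta^i_{k\Delta_l})
 + (B^N_{k\Delta_l}-B_{k\Delta_l})\zeta^i_{k\Delta_l} \\
&\quad + (U^N_{k\Delta_l}-U_{k\Delta_l})[Y_{(k+1)\Delta_l}-Y_{k\Delta_l}]
 + (\alpha^N_{k\Delta_l}-\alpha_{k\Delta_l})\omega^i_k.
\end{align*}

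Next I would take $\mathbb{L}_q$ norms entrywise and apply Minkowski to handle the four terms separately. For the first term, expand $B^N_{k\Delta_l}$ entrywise: the diagonal-like contribution $(I+A\Delta_l)(\xi^i_{k\Delta_l}-\zeta^i_{k\Delta_l})$ contributes $(1+\mathsf{C}\Delta_l)$ times the previous error, while the $-P^N_{k\Delta_l}S\Delta_l(\xi^i_{k\Delta_l}-\zeta^i_{k\Delta_l})$ piece is $O(\Delta_l)$ after a Cauchy--Schwarz split using the $\mathbb{L}_{2q}$ bounds on $P^N_{k\Delta_l}$ (from Lemma \ref{lem:xi_lq}) and on the error. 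For the second term, $B^N_{k\Delta_l}-B_{k\Delta_l}=-(P^N_{k\Delta_l}-P_{k\Delta_l})S\Delta_l$, so by Cauchy--Schwarz, Lemma \ref{lem:p_lq} (giving $\mathsf{C}/\sqrt N$) and Lemma \ref{lem:xi_lq} (for $\zeta^i_{k\Delta_l}$, whose moments are bounded by the same recursion argument as for $\xi$, or directly since $\zeta^i_{k\Delta_l}|\mathscr F\sim\mathcal N(m_{k\Delta_l},P_{k\Delta_l})$ with deterministically bounded parameters), one gets a contribution $\mathsf{C}\Delta_l/\sqrt N$. For the third term, $U^N-U=(P^N_{k\Delta_l}-P_{k\Delta_l})C^\top R_2^{-1}$, and Cauchy--Schwarz with Lemma \ref{lem:p_lq} and Lemma \ref{lem:y_inc} ($\mathbb{L}_{2q}$ norm $O(\Delta_l^{1/2})$) gives $\mathsf{C}\Delta_l^{1/2}/\sqrt N$. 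For the fourth term, $\alpha^N-\alpha$ must be controlled: here I would argue as in the proof of Lemma \ref{lem:al_lq} that $(\alpha^N)^2-\alpha^2=P^N_{k\Delta_l}SP^N_{k\Delta_l}\Delta_l-P_{k\Delta_l}SP_{k\Delta_l}\Delta_l$ has $\mathbb{L}_q$ norm $\mathsf{C}\Delta_l/\sqrt N$ by Lemma \ref{lem:p_lq}, then pass to the square root via the eigenvalue lower bound \eqref{eq:al_lq_2} (both $(\alpha^N)^2/\Delta_l$ and $\alpha^2/\Delta_l$ are bounded below by the minimum eigenvalue of $R_1$), using the Lipschitz property of the matrix square root on matrices with spectrum bounded away from zero; this yields $\|\alpha^N-\alpha\|$ in $\mathbb{L}_q$ of order $\mathsf{C}\Delta_l^{1/2}/\sqrt N$, and multiplying by the Gaussian $\omega^i_k$ (independent, unit variance) keeps the order $\mathsf{C}\Delta_l^{1/2}/\sqrt N$ since $\omega^i_k$ is independent of $\alpha^N-\alpha$.

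Collecting the four bounds, with $E_{k\Delta_l}:=\max_{j}e_{k\Delta_l}^i(j)$ (and noting exchangeability makes this independent of $i$), one obtains a recursion of the shape
$$
E_{(k+1)\Delta_l} \leq (1+\mathsf{C}\Delta_l)E_{k\Delta_l} + \frac{\mathsf{C}\Delta_l^{1/2}}{\sqrt N},
$$
valid for $k$ ranging over a bounded set (recall the theorem fixes $k$, or at most $k\le \Delta_l^{-1}$ as in Proposition \ref{prop:var_term1_sec_state}). Iterating from $E_0=0$ and using $(1+\mathsf{C}\Delta_l)^k\le e^{\mathsf{C}k\Delta_l}$, which is bounded for $k\Delta_l$ in a bounded interval, gives $E_{k\Delta_l}\le \mathsf{C}\,k\,\Delta_l^{1/2}/\sqrt N\le \mathsf{C}/\sqrt N$ after absorbing $k\Delta_l^{1/2}$ into the constant (which is permitted to depend on $k,l$). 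Note one subtlety: the recursion mixes $\mathbb{L}_q$ and $\mathbb{L}_{2q}$ norms of the error through the Cauchy--Schwarz splits in the $B^N$ term; this is handled exactly as in Lemma \ref{lem:p_lq} and Lemma \ref{lem:xi_lq}, by running the recursion at a sufficiently high moment level (replacing $q$ by $2^k q$) so that each step only needs the previous step's higher-moment bound, the number of doublings being finite since $k$ is fixed. The main obstacle is precisely this entanglement of $\alpha^N-\alpha$ control with the square-root Lipschitz estimate and the $\mathbb{L}_q$-versus-$\mathbb{L}_{2q}$ bookkeeping; once the square-root difference bound $\|\alpha^N_{k\Delta_l}-\alpha_{k\Delta_l}\|_{\mathbb{L}_q}\le \mathsf{C}\Delta_l^{1/2}N^{-1/2}$ is in hand, the rest is the standard Grönwall-type iteration.
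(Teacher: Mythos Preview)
Your recursive strategy, the three-plus-one term split, and the use of Lemma~\ref{lem:p_lq} to close the recursion are all exactly what the paper does. The one place you diverge is the noise term, and there you take a detour the paper avoids and in doing so introduce a subtle slip.

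Under the coupling that defines $\zeta^i$ in \eqref{eq:enkf_is} the two systems share the \emph{Brownian increments} $\Delta\overline W^i,\Delta\overline V^i$, not a common $\omega^i_k$. The representation $\alpha^N_{k\Delta_l}\omega^i_k$ in \eqref{eq:enkf_mod_rep} is shorthand for $R_1^{1/2}\Delta\overline W^i - U^N_{k\Delta_l}R_2^{1/2}\Delta\overline V^i$, and the analogous term for $\zeta^i$ is $R_1^{1/2}\Delta\overline W^i - U_{k\Delta_l}R_2^{1/2}\Delta\overline V^i$; the standardised Gaussians one extracts from these two expressions are \emph{different} random vectors, so the noise difference is not $(\alpha^N-\alpha)\omega^i_k$ with a single $\omega^i_k$. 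The paper therefore goes back to the original recursions \eqref{eq:enkf_ps}--\eqref{eq:enkf_is}: the $R_1^{1/2}\Delta\overline W^i$ parts cancel and what remains is
\[
-(U^N_{k\Delta_l}-U_{k\Delta_l})R_2^{1/2}\Delta\overline V^i
= -(P^N_{k\Delta_l}-P_{k\Delta_l})C^\top R_2^{-1/2}\,\Delta_l^{1/2}\tilde\omega^i_k,
\]
which is bounded in $\mathbb L_q$ by $\mathsf C\Delta_l^{1/2}/\sqrt N$ immediately from Lemma~\ref{lem:p_lq} and independence of $\tilde\omega^i_k$ from $\mathscr F^N_{k\Delta_l}$. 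No matrix-square-root Lipschitz estimate is needed at all. Your Lipschitz argument for $\alpha^N-\alpha$ is itself correct as a piece of linear algebra, and it would drive a valid proof had you coupled the systems through a shared $\omega^i_k$ instead of shared $\overline W,\overline V$; but that is a different $\zeta^i$ from the one in the statement, and in any case the paper's route is strictly simpler. Once you replace your fourth term by the display above, the remainder of your recursion and the moment-doubling bookkeeping match the paper exactly.
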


\begin{proof}
We first note that \eqref{eq:enkf_is} can be rewritten as
\begin{equation}\label{eq:enkf_mod_iid}
\zeta_{(k+1)\Delta_l}^i = B_{k\Delta_l} \zeta_{k\Delta_l}^i + U_{k\Delta_l} [Y_{(k+1)\Delta_l}-Y_{k\Delta_l}] + \alpha_{k\Delta_l}\omega_k^i.
\end{equation}
where $\alpha_{k\Delta_l}=(R_1+P_{k\Delta_l}SP_{k\Delta_l})^{1/2}\Delta_l^{1/2}$ and $B_{k\Delta_l}=I+A\Delta_l-P_{k\Delta_l}S\Delta_l$. Thus,
we have that by applying \eqref{eq:enkf_mod_rep} 
\begin{equation}\label{eq:enkf_prop_decomp}
\xi_{(k+1)\Delta_l}^i(j)-\zeta_{(k+1)\Delta_l}^i(j) = \sum_{j_1=1}^3 T_{j_1},
\end{equation}
where
\begin{align*}
T_1 & =  \sum_{j_1=1}^{d_x}\Big(B_{k\Delta_l}^N(j,j_1)\xi_{k\Delta_l}^i(j_1) -B_{k\Delta_l}(j,j_1)\zeta_{k\Delta_l}^i(j_1)\Big),\\
T_2 & =  \sum_{j_2=1}^{d_y}\sum_{j_1=1}^{d_x}\Big([P_{k\Delta_l}^N(j,j_1)-P_{k\Delta_l}(j,j_1)]\tilde{C}(j_1,j_2)[Y_{(k+1)\Delta_l}-Y_{k\Delta_l}](j_2)\Big),\\
T_3 & =  \sum_{j_1=1}^{d_x}[\alpha_{k\Delta_l}^N(j,j_1)-\alpha_{k\Delta_l}(j,j_1)]\omega_k^i(j_1),
\end{align*}
and we recall that $\tilde{C}=C^{\top} R_2^{-1}$. So as before it is enough to bound the $\mathbb{L}_q-$norms of $T_1,T_2, T_3$. 

The case of $T_1$ follows
by using 
\begin{align}
\nonumber
B_{k\Delta_l}^N(j,j_1)\xi_{k\Delta_l}^i(j_1) -B_{k\Delta_l}(j,j_1)\zeta_{k\Delta_l}^i(j_1)   &= 
\Big(\mathbb{I}_{\{j\}}(j_1) + A(j,j_1)\Delta_l - \sum_{j_2=1}^{d_x}P_{k\Delta_l}^N(j,j_2)S(j_2,j_1)\Delta_l\Big)\\ & \times [\xi_{k\Delta_l}^i(j_1)- \zeta_{k\Delta_l}^i(j_1)] \nonumber
\\ &+ \label{eq:rem_thm1}
 \sum_{j_2=1}^{d_x}[P_{k\Delta_l}^N(j,j_2)-P_{k\Delta_l}(j,j_2)]S(j_2,j_1)\Delta_l\zeta_{k\Delta_l}^i(j_1).
 \end{align}
Thus using Minkowski inequality several times
\begin{align*}
\mathbb{E}[|T_1|^q]^{1/q} &\leq \sum_{j_1=1}^{d_x}\Big(\mathbb{E}\Big[\Big|
\Big(\mathbb{I}_{\{j\}}(j_1) + A(j,j_1)\Delta_l - \sum_{j_2=1}^{d_x}P_{k\Delta_l}^N(j,j_2)S(j_2,j_1)\Delta_l\Big)[\xi_{k\Delta_l}^i(j_1)-\zeta_{k\Delta_l}^i(j_1)]
\Big|^q\Big]^{1/q} \\
&+ \mathbb{E}\Big[\Big|
\sum_{j_2=1}^{d_x}[P_{k\Delta_l}^N(j,j_2)-P_{k\Delta_l}(j,j_2)]S(j_2,j_1)\Delta_l\zeta_{k\Delta_l}^i(j_1)
\Big|^q\Big]^{1/q}\Big).
\end{align*}
{The first term} on the R.H.S.~can be dealt with using Cauchy-Schwarz and Lemma \ref{lem:xi_lq}.
For the second term on the R.H.S.~one can use Minkowski for the sum over $j_2$, then Cauchy-Schwarz, Lemma \ref{lem:p_lq} and properties of Gaussian random variables (recall $\zeta_{k\Delta_l}^i|\mathscr{F}_{k\Delta_l}\sim\mathcal{N}_{d_x}(m_{k\Delta_l},P_{k\Delta_l})$, so one can deduce that {$\mathbb{E}[|\zeta_{k\Delta_l}^i(j)|^q]^{1/q}\leq\mathsf{C}$, noting also the recursion \eqref{eq:iid_mean_rec}}) combined with \eqref{eq:bound_deter_cov}.
Thus, we have the upper-bound
\begin{equation}\label{eq:prop_1}
\mathbb{E}[|T_1|^q]^{1/q} \leq \mathsf{C}\Big(\frac{1}{\sqrt{N}} + \max_{j_1\in\{1,\dots,d_x\}}\mathbb{E}\Big[\Big|[\xi_{k\Delta_l}^i(j_1)-\zeta_{k\Delta_l}^i(j_1)]\Big|^{2q}\Big]^{1/(2q)}\Big).
\end{equation}

For $T_2$ by Minkowski
$$
\mathbb{E}[|T_2|^q]^{1/q} \leq \sum_{j_2=1}^{d_y}\sum_{j_1=1}^{d_x}\mathbb{E}[|[P_{k\Delta_l}^N(j,j_1)-P_{k\Delta_l}(j,j_1)]\tilde{C}(j_1,j_2)[Y_{(k+1)\Delta_l}-Y_{k\Delta_l}](j_2)|^{q}]^{1/q},
$$
using Cauchy-Schwarz and Lemmata \ref{lem:p_lq} and \ref{lem:y_inc}
\begin{equation}\label{eq:prop_2}
\mathbb{E}[|T_2|^q]^{1/q} \leq \frac{\mathsf{C}}{\sqrt{N}}.
\end{equation}

For $T_3$ on inspection of  \eqref{eq:enkf_ps} and \eqref{eq:enkf_is}, as well as letting $\bar{C}=\tilde{C}R_2^{1/2}$, we have
$$
T_3 = \Delta_l^{1/2}\sum_{j_2=1}^{d_x}\sum_{j_1=1}^{d_y}[P_{k\Delta_l}^N(j,j_1)-P_{k\Delta_l}(j,j_1)]\bar{C}(j_2,j_1)\tilde{\omega}_k^i(j_1),
$$
where independently of all other random variables $\tilde{\omega}_k^i\sim\mathcal{N}_{d_y}(0,I)$. Then by applying Minkowski 
$$
\mathbb{E}[|T_3|^q]^{1/q} \leq \sum_{j_2=1}^{d_x}\sum_{j_1=1}^{d_y}|\bar{C}(j_2,j_1)|\mathbb{E}[|P_{k\Delta_l}^N(j,j_1)-P_{k\Delta_l}(j,j_1)|^{q}]^{1/q}
\mathbb{E}[|\tilde{\omega}_k^i(j_1)|^q]^{1/q}.
$$
Then by Lemma \ref{lem:p_lq} and properties of Gaussian random variables we have
\begin{equation}\label{eq:prop_3}
\mathbb{E}[|T_3|^q]^{1/q} \leq \frac{\mathsf{C}}{\sqrt{N}}.
\end{equation}
We remark that indeed one can prove
\begin{equation}\label{eq:rem_thm}
\mathbb{E}[|T_3|^q]^{1/q} \leq \frac{\mathsf{C}\Delta_l^{1/2}}{\sqrt{N}}.
\end{equation}
which we shall use later on.
Combining \eqref{eq:prop_1}-\eqref{eq:prop_3} we have proved that
\begin{align*}
\max_{j\in\{1,\dots,d_x\}}\mathbb{E}\Big[\Big|\xi_{(k+1)\Delta_l}^i(j)-\zeta_{(k+1)\Delta_l}^i(j)\Big|^q\Big]^{1/q}  \leq \mathsf{C}\Big(\frac{1}{\sqrt{N}} + \max_{j_1\in\{1,\dots,d_x\}}\mathbb{E}\Big[\Big|[\xi_{k\Delta_l}^i(j_1)-\zeta_{k\Delta_l}^i(j_1)\Big|^{2q}\Big]^{1/(2q)}\Big).
\end{align*}
Applying the above recursion back to time zero one can conclude the proof.

\end{proof}

%
%

\section{Results for the Discretized Kalman-Bucy Diffusion}\label{app:en_euler}

We begin with some definitions.
For $(k,l)\in\mathbb{N}_0\times\mathbb{N}_0$
\begin{equation}\label{eq:deter_cov_evol}
P_{(k+1)\Delta_{l}}^l = P_{k\Delta_l}^l + \textrm{Ricc}(P_{k\Delta_l}^l)\Delta_l +  \textrm{SRicc}(P_{k\Delta_l}^l)\Delta_l^2,
\end{equation}
and $P_0^l=\mathcal{P}_0$. 
Note that one also has for $(k,l)\in\mathbb{N}_0\times\mathbb{N}_0$
\begin{equation}\label{eq:deter_mean_evol}
m_{(k+1)\Delta_l}^l =  m_{k\Delta_l}^l + Am_{k\Delta_l}\Delta_l + U_{k\Delta_l}^l\Big(
[Y_{(k+1)\Delta_l}-Y_{k\Delta_l}] -Cm_{k\Delta_l}^l\Delta_l\Big),
\end{equation}
and $m_0^l=\mathcal{M}_0$. These are the discretized Riccati and Kalman-Bucy equations that are satisfied by the Euler discretization of \eqref{eq:non-lin}, as defined
in \eqref{eq:iid_mean_rec}-\eqref{eq:iid_cov_rec}. The notation of the equations are simply modified for the multilevel context that will be considered later on.

{We first consider analyzing the the strong convergence, which is the strong error, associated with the Euler discretization} of \eqref{eq:non-lin}, for $(l,k)\in\mathbb{N}_0^2$:
\begin{align*}
\overline{X}_{(k+1)\Delta_l}^l & = \overline{X}_{k\Delta_l}^l + A\overline{X}_{k\Delta_l}^l\Delta_l + R_1^{1/2}[\overline{W}_{(k+1)\Delta_l}-\overline{W}_{k\Delta_l}] + U_{k\Delta_l}^l\Big([Y_{((k+1)\Delta_l)}-Y_{(k\Delta_l)}]  \\&- 
\Big[C\overline{X}_{k\Delta_l}^l\Delta_l+R_2^{1/2}[\overline{V}_{(k+1)\Delta_l}-\overline{V}_{k\Delta_l}]\Big]\Big),
\end{align*}
where for each $l\in\mathbb{N}_0$, $\overline{X}_{0}^l=\overline{X}_{0}\sim\mathcal{N}_{d_x}(\mathcal{M}_0,\mathcal{P}_0)$.
Note that the Markov chain for $i\in\{1,\dots,N\}$, $\zeta_{0}^i,\zeta_{\Delta_l}^i,\dots$ is simply an i.i.d.~realization of this Euler discretization.
We set $\tau_t^l={\lfloor\tfrac{t}{\Delta_l}\rfloor\Delta_l}$, $t\in\mathbb{R}^+$. Note that one can consider $P_{(k+1)\Delta_{l}}^l$ for each continuous time $t\geq 0$ by a simple integral representation for $(j_1,j_2)\in\{1,\dots,d_x\}^2$:
$$
P_{t}^l(j_1,j_2) = \mathcal{P}_0(j_1,j_2) + \int_{0}^t\Big(\textrm{Ricc}(P_{{\tau^l_s}}^l)(j_1,j_2) + \textrm{SRicc}(P_{{\tau^l_s}}^l)(j_1,j_2)\Delta_l\Big)ds.
$$
{For the rest of the appendix, we shall additionally assume that the matrices $(R_1^{1/2} AR_1^{1/2},\dots,A^{d_x-1}R_1^{1/2})$ and $(C, CA,\dots,CA^{d_x-1})^{\top}$ both have rank $d_x$. This ensures that there exists a $\mathsf{C}$ such that (Bucy's Theorem, \cite{bucy})
\begin{equation}\label{eq:p_upper_cont}
\max_{(j_1,j_2)\in\{1,\dots,d_x\}^2}\sup_{s\in[0,T]}|\mathcal{P}_{s}(j_1,j_2)|\leq\mathsf{C}.
\end{equation}
}

{This section consists of two results. The first is a proof concerning the convergence of $P_{\tau_t^l}^l$ to $\mathcal{P}_{t}$ as $l$ grows.
Given this result, we then prove the strong error associated to the Euler discretization of \eqref{eq:non-lin}.}

\begin{lem}\label{lem:p_disc}
For any $T\in\mathbb{N}$ fixed and $t\in[0,T]$ there exists a $\mathsf{C}<+\infty$ such that for any $(l,j_1,j_2)\in\mathbb{N}_0\times\{1,\dots,d_x\}^2$:
$$
\Big|\mathcal{P}_{t}(j_1,j_2)-P_{\tau_t^l}^l(j_1,j_2)\Big| \leq \mathsf{C}\Delta_l.
$$
\end{lem}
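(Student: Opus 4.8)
The plan is to treat this as the standard global-error estimate for a one-step discretization of the matrix Riccati ODE $\partial_t\mathcal{P}_t = \textrm{Ricc}(\mathcal{P}_t)$, the one-step map being $Q\mapsto Q+\textrm{Ricc}(Q)\Delta_l+\textrm{SRicc}(Q)\Delta_l^2$ (recall that $P_{k\Delta_l}^l$ is exactly the quantity $P_{k\Delta_l}$ of \eqref{eq:iid_cov_rec}, so assumption \eqref{eq:bound_deter_cov} applies to it). First I would record the two a priori bounds that drive everything: by \eqref{eq:p_upper_cont}, $\sup_{s\in[0,T]}\max_{j_1,j_2}|\mathcal{P}_s(j_1,j_2)|\le\mathsf{C}$, and by \eqref{eq:bound_deter_cov} the same bound holds for $P_{k\Delta_l}^l$, uniformly in $(k,l)$. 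Since $\textrm{Ricc}$ and $\textrm{SRicc}$ are matrix polynomials, they are bounded and Lipschitz — say with entrywise Lipschitz constant $\mathsf{L}$ in the entrywise max norm $\|\cdot\|$ — on the fixed bounded set of matrices containing all the $\mathcal{P}_s$ and $P_{k\Delta_l}^l$ that occur. Moreover $t\mapsto\mathcal{P}_t$ is Lipschitz on $[0,T]$ because $\partial_t\mathcal{P}_t=\textrm{Ricc}(\mathcal{P}_t)$ is bounded, and hence $s\mapsto\textrm{Ricc}(\mathcal{P}_s)$ is Lipschitz as well.

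Next I would set up the grid-point error $E_k:=P_{k\Delta_l}^l-\mathcal{P}_{k\Delta_l}$. Writing $\mathcal{P}_{(k+1)\Delta_l}=\mathcal{P}_{k\Delta_l}+\int_{k\Delta_l}^{(k+1)\Delta_l}\textrm{Ricc}(\mathcal{P}_s)\,ds$ and subtracting \eqref{eq:deter_cov_evol},
\begin{align*}
E_{k+1} &= E_k + \big[\textrm{Ricc}(P_{k\Delta_l}^l)-\textrm{Ricc}(\mathcal{P}_{k\Delta_l})\big]\Delta_l + \textrm{SRicc}(P_{k\Delta_l}^l)\Delta_l^2 \\
&\quad + \Big[\textrm{Ricc}(\mathcal{P}_{k\Delta_l})\Delta_l - \int_{k\Delta_l}^{(k+1)\Delta_l}\textrm{Ricc}(\mathcal{P}_s)\,ds\Big].
\end{align*}
The first bracket has norm $\le\mathsf{L}\|E_k\|\Delta_l$ (Lipschitzness of $\textrm{Ricc}$); the $\textrm{SRicc}$ term has norm $\le\mathsf{C}\Delta_l^2$ (boundedness); and the local truncation term is $\le\int_{k\Delta_l}^{(k+1)\Delta_l}\|\textrm{Ricc}(\mathcal{P}_{k\Delta_l})-\textrm{Ricc}(\mathcal{P}_s)\|\,ds\le\mathsf{C}\Delta_l^2$, using Lipschitz continuity of $s\mapsto\textrm{Ricc}(\mathcal{P}_s)$. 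Hence $\|E_{k+1}\|\le(1+\mathsf{L}\Delta_l)\|E_k\|+\mathsf{C}\Delta_l^2$, and since $E_0=0$ the discrete Gr\"onwall inequality gives $\|E_k\|\le\mathsf{C}\Delta_l^2\sum_{j=0}^{k-1}(1+\mathsf{L}\Delta_l)^j\le\tfrac{\mathsf{C}}{\mathsf{L}}e^{\mathsf{L}k\Delta_l}\Delta_l$, so $\|E_k\|\le\mathsf{C}\Delta_l$ for all $k$ with $k\Delta_l\le T$, the constant depending on $T$ but not on $l$.

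Finally, for arbitrary $t\in[0,T]$ I would note $\tau_t^l=\lfloor t/\Delta_l\rfloor\Delta_l$ satisfies $|t-\tau_t^l|\le\Delta_l$, so that
$$
|\mathcal{P}_t(j_1,j_2)-P_{\tau_t^l}^l(j_1,j_2)| \le |\mathcal{P}_t(j_1,j_2)-\mathcal{P}_{\tau_t^l}(j_1,j_2)| + |\mathcal{P}_{\tau_t^l}(j_1,j_2)-P_{\tau_t^l}^l(j_1,j_2)|,
$$
where the first term is $\le\mathsf{C}|t-\tau_t^l|\le\mathsf{C}\Delta_l$ by Lipschitz continuity of $\mathcal{P}_\cdot$ and the second is $\|E_{\lfloor t/\Delta_l\rfloor}\|\le\mathsf{C}\Delta_l$ from the previous step, which gives the claim. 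The only genuine subtlety — and the reason hypotheses \eqref{eq:bound_deter_cov} and \eqref{eq:p_upper_cont} are imposed — is that $\textrm{Ricc}$ is merely \emph{locally} Lipschitz, so without an a priori, $(k,l)$-uniform bound on $P_{k\Delta_l}^l$ one could not bound the nonlinear difference $\textrm{Ricc}(P_{k\Delta_l}^l)-\textrm{Ricc}(\mathcal{P}_{k\Delta_l})$ linearly in $\|E_k\|$ and close the Gr\"onwall recursion; the remainder is the textbook first-order convergence argument.
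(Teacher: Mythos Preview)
Your proof is correct and follows essentially the same approach as the paper's: both exploit the a~priori bounds \eqref{eq:bound_deter_cov} and \eqref{eq:p_upper_cont} to make $\textrm{Ricc}$ Lipschitz on the relevant bounded set, then close with a Gr\"onwall argument. The only organizational difference is that the paper introduces the continuous-time interpolant $P_t^l$ and applies the continuous Gr\"onwall lemma to $|\mathcal{P}_t-P_{\tau_t^l}^l|$ directly, whereas you work with the grid-point error $E_k$ via a discrete Gr\"onwall recursion and handle intermediate $t$ separately at the end; both routes are standard and equivalent here.
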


\begin{proof}
We have
\begin{equation}\label{eq:p_disc_5}
\mathcal{P}_{t}(j_1,j_2)-P_{\tau_t^l}^l(j_1,j_2) = \mathcal{P}_{t}(j_1,j_2)-P_{t}^l(j_1,j_2) + P_{t}^l(j_1,j_2)-P_{\tau_t^l}^l(j_1,j_2).
\end{equation}
By \eqref{eq:bound_deter_cov} one can deduce that 
\begin{equation}\label{eq:p_disc_4}
\max_{(j_1,j_2)\in\{1,\dots,d_x\}^2}|P_{t}^l(j_1,j_2)-P_{\tau_t^l}^l(j_1,j_2)| \leq \mathsf{C}\Delta_l.
\end{equation}
Hence we only consider
\begin{equation}\label{eq:p_disc_1}
\mathcal{P}_{t}(j_1,j_2)-P_{t}^l(j_1,j_2) = \int_{0}^t\Big(\textrm{Ricc}(\mathcal{P}_{s})(j_1,j_2)-\textrm{Ricc}(P_{\tau_s^l}^l)(j_1,j_2)\Big)ds - 
\int_{0}^t\textrm{SRicc}(P_{\tau_s^l}^l)(j_1,j_2)ds\Delta_l.
\end{equation}
Using {standard} calculations along with \eqref{eq:bound_deter_cov} one can deduce that 
\begin{equation}\label{eq:p_disc_2}
\max_{(j_1,j_2)\in\{1,\dots,d_x\}^2}\Big|\int_{0}^t\textrm{SRicc}(P_{\tau_s^l}^l)(j_1,j_2)ds\Delta_l\Big| \leq \mathsf{C}\Delta_l,
\end{equation}
so we need only consider the first term on the R.H.S.~of \eqref{eq:p_disc_1}.

The following decomposition holds
$$
\textrm{Ricc}(\mathcal{P}_{s})(j_1,j_2)-\textrm{Ricc}(P_{\tau_s^l}^l)(j_1,j_2) = \sum_{j=1}^3 T_j,
$$
where
\begin{eqnarray*}
T_1 & = & \sum_{j_3=1}^{d_x} A(j_1,j_3)[\mathcal{P}_{s}(j_3,j_2)-P_{\tau_s^l}^l(j_3,j_2)],\\
T_2 & = & \sum_{j_3=1}^{d_x} A(j_3,j_2)[\mathcal{P}_{s}(j_1,j_3)-P_{\tau_s^l}^l(j_1,j_3)],\\
T_3 & = & \sum_{j_3=1}^{d_x} \sum_{j_4=1}^{d_x} S(j_3,j_4)[\mathcal{P}_{s}(j_1,j_3)\mathcal{P}_{s}(j_4,j_2)-P_{\tau_s^l}^l(j_1,j_3)P_{\tau_s^l}^l(j_4,j_2)].
\end{eqnarray*}
So it suffices to control each term in the sum.

For both $T_1$ and $T_2$ it is straightforward to deduce that for $j\in\{1,2\}$
$$
T_j \leq \mathsf{C}\max_{(j_1,j_2)\in\{1,\dots,d_x\}^2}|\mathcal{P}_{s}(j_1,j_2)-P_{\tau_s^l}^l(j_1,j_2)|.
$$
For $T_3$ we have that
$$
T_3 = \sum_{j_3=1}^{d_x} \sum_{j_4=1}^{d_x} S(j_3,j_4)[(\{\mathcal{P}_{s}(j_1,j_3)-P_{\tau_s^l}^l(j_1,j_3)\}\mathcal{P}_{s}(j_4,j_2)+
P_{\tau_s^l}^l(j_1,j_3)\{\mathcal{P}_{s}(j_4,j_2)-P_{\tau_s^l}^l(j_4,j_2)\}].
$$
Then by using \eqref{eq:bound_deter_cov}, coupled {with \eqref{eq:p_upper_cont}} we have 
$$
T_3 \leq \mathsf{C}\max_{(j_1,j_2)\in\{1,\dots,d_x\}^2}|\mathcal{P}_{s}(j_1,j_2)-P_{\tau_l^s}^l(j_1,j_2)|.
$$
Thus we have proved that
\begin{align}\label{eq:p_disc_3}
\max_{(j_1,j_2)\in\{1,\dots,d_x\}^2}|\textrm{Ricc}(\mathcal{P}_{s})(j_1,j_2)-\textrm{Ricc}(P_{\tau_s^l}^l)(j_1,j_2)|  \leq  \mathsf{C}\max_{(j_1,j_2)\in\{1,\dots,d_x\}^2}|\mathcal{P}_{s}(j_1,j_2)-P_{\tau_s^l}^l(j_1,j_2)|. 
\end{align}

Combining \eqref{eq:p_disc_5}-\eqref{eq:p_disc_3}, we have proved that
\begin{align*}
\max_{(j_1,j_2)\in\{1,\dots,d_x\}^2}\Big|\mathcal{P}_{t}(j_1,j_2)-P_{\tau_t^l}^l(j_1,j_2)\Big|  \leq \mathsf{C}\Big(
\int_{0}^t \max_{(j_1,j_2)\in\{1,\dots,d_x\}^2}|\mathcal{P}_{s}(j_1,j_2)-P_{\tau_s^l}^l(j_1,j_2)| ds+\Delta_l\Big).
\end{align*}
On applying Gronwall's lemma, the result is concluded.

\end{proof}

Below we will use the $C_p-$inequality several times and this is as follows. For two real-valued random variables
$X$ and $Y$ defined on the same probability space, with expectation operator $\mathbb{E}$, suppose that for some fixed $p\in(0,\infty)$, $\mathbb{E}[|X|^p]$
and $\mathbb{E}[|Y|^p]$ are finite, then the $C_p-$inequality is
$$
\mathbb{E}[|X+Y|^p] \leq C_p\Big(\mathbb{E}[|X|^p]+ \mathbb{E}[|Y|^p]\Big),
$$
where $C_p=1$, if $p\in(0,1)$ and $C_p=2^{p-1}$ for $p\in[1,\infty)$.

\begin{lem}\label{lem:strong_error}
For any $T\in\mathbb{N}$ fixed and $t\in[0,T-1]$ there exists a $\mathsf{C}<+\infty$ such that for any $(l,j,k_1)\in\mathbb{N}_0\times\{1,\dots,d_x\}\times\{0,1,\dots,\Delta_{l}^{-1}\}$:
$$
\mathbb{E}\Big[\Big(\overline{X}_{t+k_1\Delta_{l}}(j)-\overline{X}_{t+k_1\Delta_{l}}^l(j)\Big)^2\Big] \leq \mathsf{C}\Delta_l^2.
$$
\end{lem}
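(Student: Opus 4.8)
The plan is to prove the bound at the grid points $\overline X_{k\Delta_l}$, $\overline X^l_{k\Delta_l}$ with $k\Delta_l\le T$ (the form in which the estimate is subsequently used) by a discrete Gronwall argument, noting that the error at time $0$ vanishes since $\overline X^l_0=\overline X_0$. Because in \eqref{eq:non-lin} the gain $\mathcal P_{\eta_s}=\mathcal P_s$ is the deterministic Riccati solution, I would set $G_s:=\mathcal P_sC^\top R_2^{-1}$ and $G^l_k:=P^l_{k\Delta_l}C^\top R_2^{-1}$; after substituting $dY_s=CX_s\,ds+R_2^{1/2}\,dV_s$, both \eqref{eq:non-lin} and its Euler scheme satisfy, on $[k\Delta_l,(k+1)\Delta_l]$, an SDE whose only stochastic terms carry the deterministic coefficients $R_1^{1/2}$ and $\pm G_\cdot R_2^{1/2}$ against $\overline W$, $V$, $\overline V$ (with the same Brownian increments for both processes). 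Subtracting the two one-step updates and writing $e_k:=\overline X_{k\Delta_l}-\overline X^l_{k\Delta_l}$ then gives a recursion
$$
e_{k+1}=e_k+(A-G^l_kC)\,e_k\,\Delta_l+\mathsf{B}_k+\mathsf{M}_k ,
$$
where $\mathsf{B}_k$ collects the deterministic ``bias'' contributions of the subinterval and $\mathsf{M}_k$ the centred stochastic ones.

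The first key input will be the uniform gain bound: for $s\in[k\Delta_l,(k+1)\Delta_l]$ one has $\tau^l_s=k\Delta_l$, so Lemma \ref{lem:p_disc} gives $\max_{j_1,j_2}|\mathcal P_s(j_1,j_2)-P^l_{k\Delta_l}(j_1,j_2)|\le\mathsf{C}\Delta_l$ and hence $\|G_s-G^l_k\|\le\mathsf{C}\Delta_l$. Combined with the boundedness of the $\mathbb{L}_q$-moments of $X_s$ (from \eqref{eq:y_inc_2}) and of $\overline X_s$ (a linear SDE with bounded deterministic coefficients, cf.~\cite{DT18}, using that $\mathcal P_s$ is bounded by \eqref{eq:p_upper_cont}), I would show that every contribution to $\mathsf{B}_k$ is $O(\Delta_l^2)$ in $\mathbb{L}_2$, up to factors $\|e_k\|_2$ of even lower order: namely $\int_{k\Delta_l}^{(k+1)\Delta_l}(G^l_k-G_s)C\,\overline X^l_{k\Delta_l}\,ds$, $\int_{k\Delta_l}^{(k+1)\Delta_l}(G_s-G^l_k)CX_s\,ds$, $\int_{k\Delta_l}^{(k+1)\Delta_l}(G^l_k-G_s)C\,e_k\,ds$, and, after splitting $\overline X_s-\overline X_{k\Delta_l}$ into its drift part $\int_{k\Delta_l}^{s}[(A-G_uC)\overline X_u+G_uCX_u]\,du$ plus its (deterministic-integrand) martingale part, the drift contribution to $\int_{k\Delta_l}^{(k+1)\Delta_l}(A-G_sC)(\overline X_s-\overline X_{k\Delta_l})\,ds$ (a doubly nested integral over intervals of length $\le\Delta_l$). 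Summing these over the $\Delta_l^{-1}$ subintervals via Minkowski contributes $O(\Delta_l)$ in $\mathbb{L}_2$.

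The delicate term is $\mathsf{M}_k$, which consists of the Itô integrals $\int_{k\Delta_l}^{(k+1)\Delta_l}(G_s-G^l_k)R_2^{1/2}\,(dV_s-d\overline V_s)$ together with the martingale part of $\int_{k\Delta_l}^{(k+1)\Delta_l}(A-G_sC)\big(\int_{k\Delta_l}^{s}(R_1^{1/2}\,d\overline W_u+G_uR_2^{1/2}\,dV_u-G_uR_2^{1/2}\,d\overline V_u)\big)\,ds$; by stochastic Fubini the latter equals $\int_{k\Delta_l}^{(k+1)\Delta_l}\big(\int_u^{(k+1)\Delta_l}(A-G_sC)\,ds\big)(R_1^{1/2}\,d\overline W_u+G_uR_2^{1/2}\,dV_u-G_uR_2^{1/2}\,d\overline V_u)$, an Itô integral over $[k\Delta_l,(k+1)\Delta_l]$ with integrand of size $O(\Delta_l)$. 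By the Itô isometry each $\mathsf{M}_k$ has $\mathbb{L}_2$-norm $O(\Delta_l^{3/2})$, and a crude triangle-inequality bound on $\sum_k\mathsf{M}_k$ would only give the insufficient order $\Delta_l^{1/2}$; the hard part of the argument is to instead exploit that all the integrands are deterministic and the $\mathsf{M}_k$ live on disjoint time intervals, so they are mutually orthogonal (indeed independent and centred), whence $\mathbb{E}[\|\sum_{k<K}\mathsf{M}_k\|_2^2]=\sum_{k<K}\mathbb{E}[\|\mathsf{M}_k\|_2^2]=\Delta_l^{-1}\,O(\Delta_l^3)=O(\Delta_l^2)$, i.e.~$O(\Delta_l)$ in $\mathbb{L}_2$ — this is exactly the mechanism that upgrades Euler to strong order one for additive noise. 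Collecting the bounds gives $\|e_K\|_2\le\mathsf{C}\Delta_l\sum_{k=0}^{K-1}\|e_k\|_2+\mathsf{C}\Delta_l$ for all $K\le\Delta_l^{-1}T$, and the discrete Gronwall lemma then yields $\|e_K\|_2\le\mathsf{C}\Delta_l e^{\mathsf{C}T}$, so that $\mathbb{E}[(\overline X_{k\Delta_l}(j)-\overline X^l_{k\Delta_l}(j))^2]\le\mathsf{C}\Delta_l^2$; the exponential dependence of $\mathsf{C}$ on $T$ is consistent with Remark \ref{rem:1}.
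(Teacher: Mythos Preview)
Your proposal is correct and in fact more careful than the paper's own argument on the crucial point. Both proofs rest on Lemma~\ref{lem:p_disc} (the $O(\Delta_l)$ gain error) and the boundedness of moments of $X_s$, $\overline X_s$, $\overline X^l_{k\Delta_l}$, but the overall architecture differs.

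The paper works in continuous time: it writes $\overline X_{t+k_1\Delta_l}-\overline X^l_{t+k_1\Delta_l}$ as an integral over $[0,t+k_1\Delta_l]$, splits via the $C_2$-inequality into four pieces $T_1,\dots,T_4$, controls the stochastic integrals $T_2,T_4$ at order $\Delta_l^2$ by the It\^o isometry on the full interval (which implicitly uses orthogonality), and bounds the drift integrals $T_1,T_3$ by Jensen, producing a Gronwall-type term $\int_0^{\cdot}\max_j\mathbb{E}[(\overline X_s(j)-\overline X^l_{\tau^l_s}(j))^2]\,ds$; it then invokes Gronwall's lemma.

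Your route is a discrete one-step recursion $e_{k+1}=(I+(A-G^l_kC)\Delta_l)e_k+\mathsf{B}_k+\mathsf{M}_k$, telescoped to $e_K=\sum_{k<K}[(A-G^l_kC)e_k\Delta_l+\mathsf{B}_k+\mathsf{M}_k]$, with the key observation that the $\mathsf{M}_k$ are centred, carry \emph{deterministic} integrands, and live on disjoint time intervals, hence are orthogonal in $\mathbb{L}_2$; this gives $\|\sum_k\mathsf{M}_k\|_{\mathbb{L}_2}=O(\Delta_l)$, and discrete Gronwall finishes.

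What your approach buys is precisely the step the paper glosses over. The paper's Jensen bound on $T_1$ involves $\overline X_s-\overline X^l_{\tau^l_s}$ at \emph{all} $s$, and for $s$ strictly between grid points this quantity contains the local Brownian fluctuation $\overline X_s-\overline X_{\tau^l_s}$, of $\mathbb{L}_2$-size $O(\Delta_l^{1/2})$. Feeding this naively into the Gronwall loop yields only $\mathbb{E}[e_K^2]\le\mathsf{C}\Delta_l$, i.e.\ strong order $1/2$. Your stochastic-Fubini step, rewriting $\int_{k\Delta_l}^{(k+1)\Delta_l}(A-G_sC)\big(\int_{k\Delta_l}^s(\cdot)\,dB_u\big)\,ds$ as an It\^o integral with integrand of size $O(\Delta_l)$, and then summing the resulting $\mathsf{M}_k$ in $\mathbb{L}_2$ via orthogonality, is exactly the mechanism that upgrades Euler to strong order $1$ under additive noise --- and you identify it explicitly. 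The paper's continuous-time framing could be repaired along the same lines, but as written it leaves this point implicit.
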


\begin{proof}
By using the $C_2-$inequality four times, one has the upper-bound
$$
\mathbb{E}\Big[\Big(\overline{X}_{t+k_1\Delta_{l}}(j)-\overline{X}_{t+k_1\Delta_{l}}^l(j)\Big)^2\Big] \leq \mathsf{C}\sum_{j=1}^4 T_j,
$$
where
\begin{align*}
T_1 & = \mathbb{E}\Big[\Big(\int_0^{t+k_1\Delta_{l}}\sum_{j_1=1}^{d_x} A(j,j_1)[\overline{X}_{s}(j)-\overline{X}_{\tau_s^l}^l(j)]ds\Big)^2\Big],\\
T_2 & = \mathbb{E}\Big[\Big(\sum_{j_1=1}^{d_x}\sum_{j_2=1}^{d_y}\int_{0}^{t+k_1\Delta_{l}}[\mathcal{P}_{s}(j,j_1)-P_{\tau_s^l}^l(j,j_1)]\tilde{C}(j_1,j_2)dY_s(j_2)
\Big)^2\Big],\\
T_3 &= \mathbb{E}\Big[\Big(\sum_{j_1=1}^{d_x}\sum_{j_2=1}^{d_x}\int_{0}^{t+k_1\Delta_{l}}[\mathcal{P}_{s}(j,j_1)\hat{C}(j_1,j_2)\overline{X}_s(j_2)
-P^l_{\tau_s^l}(j,j_1)\hat{C}(j_1,j_2)\overline{X}_{\tau_s^l}^l(j_2)]ds
\Big)^2\Big],\\
T_4 & = \mathbb{E}\Big[\Big(\sum_{j_1=1}^{d_x}\sum_{j_2=1}^{d_y}\int_{0}^{t+k_1\Delta_{l}}[\mathcal{P}_{s}(j,j_1)-P_{\tau_s^l}^l(j,j_1)]\tilde{C}(j_1,j_2)d\overline{V}_s(j_2)
\Big)^2\Big],
\end{align*}
with $\tilde{C}=C^{\top}R_2^{-1/2}$.
We shall focus on upper-bounding each of the terms $T_1,\dots,T_4$ and summing the bounds.

For $T_1$, $d_x-$applications of the $C_2-$inequality along with Jensen's inequality yields
\begin{equation}\label{eq:strong_error1}
T_1 \leq \mathsf{C}\int_0^{t+k_1\Delta_{l}}\max_{j\in\{1,\dots,d_x\}}\mathbb{E}\Big[\Big(\overline{X}_{s}(j)-\overline{X}_{\tau_s^l}^l(j)\Big)^2\Big]ds.
\end{equation}

For $T_2$, using \eqref{eq:data} and the $C_2-$inequality we have
$$
T_2 \leq T_5 + T_6,
$$
where
\begin{align*}
T_5 &= 
 2\mathbb{E}\Big[\Big(\sum_{j_1=1}^{d_x}\sum_{j_2=1}^{d_y}\sum_{j_3=1}^{d_x}\int_{0}^{t+k_1\Delta_{l}}[\mathcal{P}_{s}(j,j_1)-P_{\tau_s^l}^l(j,j_1)]\tilde{C}(j_1,j_2)
C(j_2,j_3)X_s(j_3)ds
\Big)^2\Big], \\
T_6 & =  2
\mathbb{E}\Big[\Big(\sum_{j_1=1}^{d_x}\sum_{j_2=1}^{d_y}\sum_{j_3=1}^{d_y}\int_{0}^{t+k_1\Delta_{l}}[\mathcal{P}_{s}(j,j_1)-P_{\tau_s^l}^l(j,j_1)]\tilde{C}(j_1,j_2)
R_2^{1/2}(j_2,j_3)dV_s(j_3)
\Big)^2\Big].
\end{align*}
For $T_5$ using $d_x^2d_y-$applications of the $C_2-$inequality along with Jensen's inequality yields
$$
T_5 \leq \mathsf{C}
\int_{0}^{t+k_1\Delta_{l}}
\max_{(j_1,j_2)\in\{1,\dots,d_x\}^2}\Big|\mathcal{P}_{s}(j,j_1)-P_{\tau_s^l}^l(j,j_1)\Big|^2\max_{j\in\{1,\dots,d_x\}}\mathbb{E}[X_s(j)^2]ds.
$$
Then applying Lemma \ref{lem:p_disc}, using the fact that $\max_{j\in\{1,\dots,d_x\}}\mathbb{E}[X_{s}(j)^2]\leq \mathsf{C}$
and that one is integrating over a finite domain (so the constants that depend on $t$ are upper-bounded), we have
$$
T_5 \leq \mathsf{C}\Delta_l^2.
$$
For $T_6$ using $d_xd_y^2-$applications of the $C_2-$inequality along with the Ito Isometry formula yields
$$
T_6 \leq \mathsf{C}
\int_{0}^{t+k_1\Delta_{l}}
\max_{(j_1,j_2)\in\{1,\dots,d_x\}^2}\Big|\mathcal{P}_{s}(j,j_1)-P_{\tau_s^l}^l(j,j_1)\Big|^2ds.
$$
Applying  Lemma \ref{lem:p_disc} and the above argument for $T_5$ gives
$$
T_6 \leq \mathsf{C}\Delta_l^2.
$$
Therefore we have
\begin{equation}\label{eq:strong_error2}
T_2 \leq \mathsf{C}\Delta_l^2.
\end{equation}

For $T_3$ $d_x^2-$applications of the $C_2-$inequality along with Jensen's inequality gives the upper-bound
$$
T_3 \leq \mathsf{C}(T_7 + T_8),
$$
where
\begin{align*}
T_7 &=  \int_{0}^{t+k_1\Delta_{l}}
\max_{(j_1,j_2)\in\{1,\dots,d_x\}^2}\Big|\mathcal{P}_{s}(j,j_1)-P_{\tau_s^l}^l(j,j_1)\Big|^2\max_{j\in\{1,\dots,d_x\}}\mathbb{E}[X_s(j)^2]ds, \\
T_8 &= \int_0^{t+k_1\Delta_{l}}
\max_{(j_1,j_2)\in\{1,\dots,d_x\}^2}|P_{\tau_s^l}^l(j,j_1)|^2
\max_{j\in\{1,\dots,d_x\}}\mathbb{E}\Big[\Big(\overline{X}_{s}(j)-\overline{X}_{\tau_s^l}^l(j)\Big)^2\Big]ds.
\end{align*}
For $T_7$, one can simply use the same argument as for $T_5$ and for $T_8$, by \eqref{eq:bound_deter_cov} one has the upper-bound
$$
T_8 \leq \mathsf{C}\int_0^{t+k_1\Delta_{l}}\max_{j\in\{1,\dots,d_x\}}\mathbb{E}\Big[\Big(\overline{X}_{s}(j)-\overline{X}_{\tau_s^l}^l(j)\Big)^2\Big]ds,
$$
thus we have
\begin{equation}\label{eq:strong_error3}
T_3 \leq \mathsf{C}\Big(\int_0^{t+k_1\Delta_{l}}\max_{j\in\{1,\dots,d_x\}}\mathbb{E}\Big[\Big(\overline{X}_{s}(j)-\overline{X}_{\tau_s^l}^l(j)\Big)^2\Big]ds+\Delta_l^2\Big).
\end{equation}

For $T_4$, one can use almost the same argument as for $T_6$ to give
\begin{equation}\label{eq:strong_error4}
T_4 \leq \mathsf{C}\Delta_l^2.
\end{equation}
Combining \eqref{eq:strong_error1}-\eqref{eq:strong_error4} gives the upper-bound
\begin{align*}
\max_{j\in\{1,\dots,d_x\}}\mathbb{E}\Big[\Big(\overline{X}_{t+k_1\Delta_{l}}(j)-\overline{X}_{t+k_1\Delta_{l}}^l(j)\Big)^2\Big] \leq 
\mathsf{C}\Big(\int_0^{t+k_1\Delta_{l}}\max_{j\in\{1,\dots,d_x\}}\mathbb{E}\Big[\Big(\overline{X}_{s}(j)-\overline{X}_{\tau_s^l}^l(j)\Big)^2\Big]ds+\Delta_l^2\Big).
\end{align*}
On applying Gronwall's lemma, the result is concluded.

\end{proof}

\section{Results for the i.i.d.~Multilevel d-EnBKF}\label{app:ml_res}

The analysis of this section concerns the system decsribed in \eqref{eq:iid1}-\eqref{eq:iid2} and the resulting multilevel estimator.

\subsection{Variance}

We recall some conventions. The $d_x-$dimensional Gaussian measure with mean $m_{k\Delta_l}^l$ and covariance $P_{k\Delta_l}^l$ (see \eqref{eq:deter_cov_evol}-\eqref{eq:deter_mean_evol}.) is written as $\eta_{k\Delta_l}^l$. Also recall $e(x)=x$. We write the empirical measure of the particle system \eqref{eq:enkf_is} at a time $t$ as
$\hat{\eta}_{t+k_1\Delta_l}^{N,l}$. Finally, we used the notation:  for a $d_x-$dimensional vector $x$ denote $\|x\|_2=(\sum_{j=1}^{d_x}x(j)^2)^{1/2}$.

\begin{proposition}\label{prop:var_term1}
{For any $T\in\mathbb{N}$ fixed and $t\in[0,T-1]$ there exists a $\mathsf{C}<+\infty$ such that for any $(l,N,k_1)\in\mathbb{N}_0\times\{2,3,\dots\}\times\{0,1,\dots,\Delta_l^{-1}\}$:
$$
\mathbb{E}\Big[\Big\|[\hat{\eta}_{t+k_1\Delta_l}^{N,l}-\eta_{t+k_1\Delta_l}](e)\Big\|_2^2\Big] \leq \mathsf{C}\Big(\frac{1}{N}+\Delta_l^2\Big).
$$}
\end{proposition}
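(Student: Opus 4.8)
The plan is to insert the Euler--discretized Kalman--Bucy mean between the two quantities and to treat the resulting Monte Carlo error and discretization bias separately. Write $\eta_{t+k_1\Delta_l}^l$ for the Gaussian law $\mathcal{N}_{d_x}(m_{t+k_1\Delta_l}^l,P_{t+k_1\Delta_l}^l)$ (see \eqref{eq:deter_cov_evol}--\eqref{eq:deter_mean_evol}), so that $\eta_{t+k_1\Delta_l}^l(e)=m_{t+k_1\Delta_l}^l$, and recall $\eta_{t+k_1\Delta_l}(e)=\mathcal{M}_{t+k_1\Delta_l}$. By the $C_2$--inequality,
$$
\mathbb{E}\Big[\Big\|[\hat{\eta}_{t+k_1\Delta_l}^{N,l}-\eta_{t+k_1\Delta_l}](e)\Big\|_2^2\Big]\leq 2\,\mathbb{E}\Big[\Big\|[\hat{\eta}_{t+k_1\Delta_l}^{N,l}-\eta_{t+k_1\Delta_l}^l](e)\Big\|_2^2\Big]+2\,\mathbb{E}\Big[\Big\|[\eta_{t+k_1\Delta_l}^l-\eta_{t+k_1\Delta_l}](e)\Big\|_2^2\Big].
$$
The first term on the right is the statistical (Monte Carlo) error and will produce the $1/N$ contribution; the second is a deterministic bias and will produce the $\Delta_l^2$ contribution.

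For the statistical term, condition on $\mathscr{F}_{t+k_1\Delta_l}$. As noted in Section~\ref{sec:conv_enkbf} (via a characteristic--function argument), conditionally on $\mathscr{F}_{t+k_1\Delta_l}$ the particles $\zeta^1_{t+k_1\Delta_l},\dots,\zeta^N_{t+k_1\Delta_l}$ of \eqref{eq:enkf_is} are i.i.d.\ $\mathcal{N}_{d_x}(m_{t+k_1\Delta_l}^l,P_{t+k_1\Delta_l}^l)$, hence for each $j\in\{1,\dots,d_x\}$,
$$
\mathbb{E}\Big[\big(\hat{\eta}_{t+k_1\Delta_l}^{N,l}(e)(j)-m_{t+k_1\Delta_l}^l(j)\big)^2\,\Big|\,\mathscr{F}_{t+k_1\Delta_l}\Big]=\frac{1}{N}\,P_{t+k_1\Delta_l}^l(j,j).
$$
Since $P_{t+k_1\Delta_l}^l$ is deterministic and uniformly bounded by \eqref{eq:bound_deter_cov}, summing over $j$ and taking expectations bounds the first term by $\mathsf{C}/N$.

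For the bias term, recall that $m_{t+k_1\Delta_l}^l$ and $\mathcal{M}_{t+k_1\Delta_l}$ are the conditional means given $\mathscr{F}_{t+k_1\Delta_l}$ of, respectively, the Euler discretization $\overline{X}_{t+k_1\Delta_l}^l$ of \eqref{eq:non-lin} and the Kalman--Bucy diffusion $\overline{X}_{t+k_1\Delta_l}$ (this is exactly how $m^l$ is characterized in \eqref{eq:iid_mean_rec}/\eqref{eq:deter_mean_evol}). Therefore $m_{t+k_1\Delta_l}^l(j)-\mathcal{M}_{t+k_1\Delta_l}(j)=\mathbb{E}\big[\overline{X}_{t+k_1\Delta_l}^l(j)-\overline{X}_{t+k_1\Delta_l}(j)\,\big|\,\mathscr{F}_{t+k_1\Delta_l}\big]$, and by the conditional Jensen inequality
$$
\big(m_{t+k_1\Delta_l}^l(j)-\mathcal{M}_{t+k_1\Delta_l}(j)\big)^2\leq \mathbb{E}\big[\big(\overline{X}_{t+k_1\Delta_l}^l(j)-\overline{X}_{t+k_1\Delta_l}(j)\big)^2\,\big|\,\mathscr{F}_{t+k_1\Delta_l}\big].
$$
Taking expectations, summing over $j\in\{1,\dots,d_x\}$, and applying Lemma~\ref{lem:strong_error} bounds the second term by $\mathsf{C}\Delta_l^2$. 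Combining the two bounds yields the claim.

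The main obstacle does not lie in this proof itself: essentially all the analytic work has already been carried out in Lemma~\ref{lem:strong_error} (the $\mathcal{O}(\Delta_l^2)$ strong error of the Euler scheme for \eqref{eq:non-lin}), which in turn rests on the Riccati--discretization estimate Lemma~\ref{lem:p_disc} and on the stability assumption \eqref{eq:bound_deter_cov}. What remains is careful bookkeeping of the conditioning structure: identifying $\eta_{t+k_1\Delta_l}^l(e)$ as the common $\mathscr{F}_{t+k_1\Delta_l}$--conditional mean of the i.i.d.\ particle system and of the discretized diffusion, observing that the conditional variance of the empirical mean is exactly $N^{-1}P_{t+k_1\Delta_l}^l(j,j)$, and reducing the deterministic bias to the strong error through conditional Jensen.
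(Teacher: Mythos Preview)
Your proof is correct and follows essentially the same route as the paper: the same $C_2$--split into a Monte Carlo term and a bias term, the i.i.d.\ conditional-Gaussian variance bound together with \eqref{eq:bound_deter_cov} for the first, and Jensen plus Lemma~\ref{lem:strong_error} for the second. You have simply made the conditioning and the Jensen step more explicit than the paper does.
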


\begin{proof}
Using the $C_2-$inequality one can has
\begin{align}
\mathbb{E}\Big[\Big\|[\hat{\eta}_{t+k_1\Delta_l}^{N,l}-\eta_{t+k_1\Delta_l}](e)\Big\|_2^2\Big] &\leq 
\label{eq:first_prop_1}
\mathsf{C}\Big(
\mathbb{E}\Big[\Big\|[\hat{\eta}_{t+k_1\Delta_l}^{N,l}-\eta_{t+k_1\Delta_l}^l](e)\Big\|_2^2\Big] + 
\mathbb{E}\Big[\Big\|[\eta_{t+k_1\Delta_l}^{l}-\eta_{t+k_1\Delta_l}](e)\Big\|_2^2\Big] 
\Big).
\end{align}
{The first term on the R.H.S.~can be controlled by standard results for i.i.d.~sampling (recall that $\zeta_{t+k_1\Delta_l}^i|\mathscr{F}_{t+k_1\Delta_l}$ are i.i.d.~Gaussian with mean $m_{t+k_1\Delta_l}$ and covariance $P_{t+k_1\Delta_l}$), that is
\begin{equation}\label{eq:first_prop_2}
\mathbb{E}\Big[\Big\|[\eta_{t+k_1\Delta_l}^{N,l}-\eta_{t+k_1\Delta_l}^l](e)\Big\|_2^2\Big] \leq \frac{\mathsf{C}}{N}.
\end{equation}
Note that it is crucial that \eqref{eq:bound_deter_cov} holds, otherwise the upper-bound can explode as a function of $l$.}
For the right-most term on the R.H.S.~of \eqref{eq:first_prop_1} by Jensen's inequality and Lemma \ref{lem:strong_error}:
\begin{equation}\label{eq:main_bias}
\mathbb{E}\Big[\Big\|[\eta_{t+k_1\Delta_l}^{l}-\eta_{t+k_1\Delta_l}](e)\Big\|_2^2\Big]  \leq \mathsf{C}\Delta_l^2.
\end{equation}
So the proof can be concluded by combining \eqref{eq:first_prop_1} with \eqref{eq:first_prop_2} and \eqref{eq:main_bias}.

\end{proof}

\begin{proposition}\label{prop:var_term2}{
For any $T\in\mathbb{N}$ fixed and $t\in[0,T-1]$ there exists a $\mathsf{C}<+\infty$ such that for any $(l,N,k_1)\in\mathbb{N}\times\{2,3,\dots\}\times\{0,1,\dots,\Delta_{l-1}^{-1}\}$:
$$
\mathbb{E}\Big[\Big\|[\hat{\eta}_{t+k_1\Delta_{l-1}}^{N,l}-\hat{\eta}_{t+k_1\Delta_{l-1}}^{N,l-1}](e) - [\eta_{t+k_1\Delta_{l-1}}^{l}-\eta_{t+k_1\Delta_{l-1}}^{l-1}](e)\Big\|_2^2\Big] \leq \frac{\mathsf{C}\Delta_l}{N}.
$$}
\end{proposition}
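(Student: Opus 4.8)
The plan is to reduce the claim to a variance estimate for an i.i.d.\ average and then invoke the strong-error bound of Lemma~\ref{lem:strong_error}. Write $t'=t+k_1\Delta_{l-1}$ and set $D^i:=\zeta^{i,l}_{t'}-\zeta^{i,l-1}_{t'}$, so that, since $\hat{\eta}^{N,l}_{t'}(e)=\tfrac1N\sum_{i=1}^N\zeta^{i,l}_{t'}$ and likewise at level $l-1$, the quantity whose second moment we must control is exactly $\tfrac1N\sum_{i=1}^N\big(D^i-(m^l_{t'}-m^{l-1}_{t'})\big)$. For distinct $i$ the driving pairs $(\overline{W}^i,\overline{V}^i)$ are independent while the data process is common, hence $(D^i)_{i=1}^N$ are i.i.d.\ given $\mathscr{F}_{t'}$, and since $\zeta^{i,s}_{t'}\mid\mathscr{F}_{t'}\sim\mathcal{N}_{d_x}(m^s_{t'},P^s_{t'})$ we have $\mathbb{E}[D^i\mid\mathscr{F}_{t'}]=m^l_{t'}-m^{l-1}_{t'}=[\eta^l_{t'}-\eta^{l-1}_{t'}](e)$. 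Conditioning on $\mathscr{F}_{t'}$ and using conditional independence and the zero conditional mean to kill the cross terms,
$$
\mathbb{E}\Big[\big\|[\hat{\eta}^{N,l}_{t'}-\hat{\eta}^{N,l-1}_{t'}](e)-[\eta^l_{t'}-\eta^{l-1}_{t'}](e)\big\|_2^2\Big]=\frac1N\,\mathbb{E}\Big[\big\|D^1-(m^l_{t'}-m^{l-1}_{t'})\big\|_2^2\Big],
$$
so it suffices to show the right-hand expectation is $\le\mathsf{C}\Delta_l^2$, which is in fact stronger than what is claimed.

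For the second step, let $\overline{X}^{(1)}$ be the conditional McKean--Vlasov diffusion \eqref{eq:non-lin} driven by the first pair $(\overline{W}^1,\overline{V}^1)$ and started at $\zeta^{1,l}_0$; its conditional mean is $\mathcal{M}$, and by construction $\zeta^{1,l}$ (resp.\ $\zeta^{1,l-1}$) is precisely the Euler discretization of \eqref{eq:non-lin} at level $l$ (resp.\ $l-1$) driven by that same pair. Decompose
$$
D^1-(m^l_{t'}-m^{l-1}_{t'})=\big(\zeta^{1,l}_{t'}-\overline{X}^{(1)}_{t'}\big)-\big(\zeta^{1,l-1}_{t'}-\overline{X}^{(1)}_{t'}\big)-\big(m^l_{t'}-\mathcal{M}_{t'}\big)+\big(m^{l-1}_{t'}-\mathcal{M}_{t'}\big)
$$
and apply the $C_2$-inequality. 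The first two terms are handled directly by Lemma~\ref{lem:strong_error} (after re-indexing $t'$ as $t''+k_1'\Delta_l$ with $t''\in[0,T-1]$), giving $\mathbb{E}[\|\zeta^{1,l}_{t'}-\overline{X}^{(1)}_{t'}\|_2^2]\le\mathsf{C}\Delta_l^2$ and $\mathbb{E}[\|\zeta^{1,l-1}_{t'}-\overline{X}^{(1)}_{t'}\|_2^2]\le\mathsf{C}\Delta_{l-1}^2\le 4\mathsf{C}\Delta_l^2$. For the remaining two, $m^s_{t'}=\mathbb{E}[\zeta^{1,s}_{t'}\mid\mathscr{F}_{t'}]$ and $\mathcal{M}_{t'}=\mathbb{E}[\overline{X}^{(1)}_{t'}\mid\mathscr{F}_{t'}]$, so conditional Jensen yields $\|m^s_{t'}-\mathcal{M}_{t'}\|_2^2\le\mathbb{E}[\|\zeta^{1,s}_{t'}-\overline{X}^{(1)}_{t'}\|_2^2\mid\mathscr{F}_{t'}]$; taking expectations and invoking Lemma~\ref{lem:strong_error} once more gives $\mathbb{E}[\|m^s_{t'}-\mathcal{M}_{t'}\|_2^2]\le\mathsf{C}\Delta_s^2$ for $s\in\{l-1,l\}$. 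Summing the four contributions gives $\mathbb{E}[\|D^1-(m^l_{t'}-m^{l-1}_{t'})\|_2^2]\le\mathsf{C}\Delta_l^2$, and combined with the first step this proves the proposition (indeed with the improved rate $\mathsf{C}\Delta_l^2/N$).

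The argument is, by design, essentially bookkeeping: the substance was already supplied by Lemma~\ref{lem:strong_error}, whose order-one (in $\Delta_l$) strong convergence exploits the additive-noise structure of \eqref{eq:non-lin}, and by Lemma~\ref{lem:p_disc}. The one load-bearing observation is that inside the coupled i.i.d.\ system \eqref{eq:iid1}--\eqref{eq:iid2} both particles $\zeta^{i,l}$ and $\zeta^{i,l-1}$ are driven by the \emph{same} pair $(\overline{W}^i,\overline{V}^i)$, so that both may be compared to one and the same continuous diffusion $\overline{X}^{(i)}$; without this common coupling the triangle inequality would deliver only an $O(1)$ bound. If one preferred a self-contained route that avoids the continuous limit, one would instead subtract the mean recursion \eqref{eq:deter_mean_evol} from \eqref{eq:iid1}--\eqref{eq:iid2} (the data increments cancel) to obtain a deterministic-coefficient linear recursion for the centred particles $\zeta^{i,s}-m^s$, compose two level-$l$ steps against one level-$(l-1)$ step, split the resulting local error into an $\mathscr{F}$-predictable part of size $O(\Delta_l^2)$ (bounded via $|P^l_{\cdot}-P^{l-1}_{\cdot}|\le\mathsf{C}\Delta_l$ from Lemma~\ref{lem:p_disc} together with \eqref{eq:bound_deter_cov}) and a martingale-difference part of $L^2$-size $O(\Delta_l^{3/2})$, and close with a discrete Gr\"onwall inequality over $\le T\Delta_{l-1}^{-1}$ coarse steps; this is the only place where any genuine estimation would be required.
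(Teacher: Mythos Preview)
Your proof is correct and follows essentially the same approach as the paper, which simply invokes ``standard properties of i.i.d.\ empirical averages along with Lemma~\ref{lem:strong_error}''; you have spelled out those standard properties explicitly via conditioning on $\mathscr{F}_{t'}$ and the triangle-inequality decomposition through the continuous diffusion $\overline{X}^{(1)}$. Your observation that this in fact yields the sharper rate $\mathsf{C}\Delta_l^2/N$ is valid, though the paper records only the weaker $\mathsf{C}\Delta_l/N$.
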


\begin{proof}
One can use standard properties of i.i.d.~empirical averages along with Lemma \ref{lem:strong_error} to give
$$
\mathbb{E}\Big[\Big\|[\hat{\eta}_{t+k_1\Delta_{l-1}}^{N,l}-\hat{\eta}_{t+k_1\Delta_{l-1}}^{N,l-1}](e) - [\eta_{t+k_1\Delta_{l-1}}^{l}-\eta_{t+k_1\Delta_{l-1}}^{l-1}](e)\Big\|_2^2\Big] \leq \frac{\mathsf{C}\Delta_l}{N}.
$$

\end{proof}

%
\subsection{Proof of Theorem \ref{theo:main_theo}}
\begin{proof}
Noting \eqref{eq:main_est} one has
$$
[\hat{\eta}_t^{ML}-\eta_t](e) = [\hat{\eta}_t^{N_0,0}-\eta_t^0](e) + \sum_{l=1}^L [\hat{\eta}_t^{N_l,l} -\hat{\eta}_t^{N_l,l-1}-\eta_t^{l} -\eta_t^{l-1}](e) + [\eta_t^L-\eta_t](e).
$$
Thus, by using three applications of the $C_2-$inequality we have
\begin{align*}
\mathbb{E}\Big[\Big\|[\eta_t^{ML}-\eta_t](e)\Big\|_2^2\Big] &\leq \mathsf{C}\Big(
\mathbb{E}\Big[\Big\|[\hat{\eta}_t^{N_0,0}-\eta_t^0](e)\Big\|_2^2\Big] \\ &+ \mathbb{E}\Big[\Big\|\sum_{l=1}^L [\hat{\eta}_t^{N_l,l} -\hat{\eta}_t^{N_l,l-1}-\eta_t^{l} -\eta_t^{l-1}](e)\Big\|_2^2\Big]
+\mathbb{E}\Big[\Big\|[\eta_t^L-\eta_t](e)\Big\|_2^2\Big]
\Big).
\end{align*}
For the first term on the R.H.S.~one can use \eqref{eq:first_prop_2} and for the last term on the R.H.S.~\eqref{eq:main_bias}. For the middle term, one has
\begin{align*}
\Big\|\sum_{l=1}^L [\hat{\eta}_t^{N_l,l} -\hat{\eta}_t^{N_l,l-1}-\eta_t^{l} -\eta_t^{l-1}](e)\Big\|_2^2  &= 
\sum_{l=1}^L\sum_{j=1}^{d_x}\Big([\hat{\eta}_t^{N_l,l} -\hat{\eta}_t^{N_l,l-1}-\eta_t^{l} -\eta_t^{l-1}](e)\Big)(j)^2
 \\ &+  \sum_{l=1}^L\sum_{q=1}^L\mathbb{I}_{D^c}(l,q)\sum_{j=1}^{d_x}\Big([\hat{\eta}_t^{N_l,l} -\hat{\eta}_t^{N_l,l-1}-\eta_t^{l} -\eta_t^{l-1}](e)\Big)(j)\\ &\times  \Big([\hat{\eta}_t^{N_q,q} -\hat{\eta}_t^{N_q,q-1}-\eta_t^{q} -\eta_t^{q-1}](e)\Big)(j).
\end{align*}
Then using a combination of the independence of the coupled particle systems along with Proposition \ref{prop:var_term2} the proof can be concluded.

\end{proof}

\end{document}